\documentclass[12pt, reqno]{amsart}
\usepackage{amsmath}
\usepackage{amssymb}
\usepackage{epsfig}
\usepackage{graphicx}
\usepackage{color}
\usepackage{fullpage}
\usepackage{comment}
\definecolor{shadecolor}{gray}{0.875}
\usepackage{amscd}
\usepackage{stmaryrd}
\usepackage{threeparttable}

\usepackage{ulem}
\usepackage{amsthm}
\usepackage{csquotes}
\usepackage{enumitem}
\usepackage{graphics}
\usepackage{tikz-cd}
\usepackage{hhline}
\usepackage{multirow}
\usepackage{makecell}
\usepackage{booktabs}
\usetikzlibrary{backgrounds,calc,positioning}
\usepackage{multicol} 

\usepackage[backend=bibtex,style=alphabetic,maxbibnames=99,url=false]{biblatex}
\usepackage[colorlinks=false,urlbordercolor=white]{hyperref}

\tikzset{sgplattice/.style={inner sep=1pt,norm/.style={red!50!blue},char/.style={blue!50!black},
		lin/.style={black!50}},cnj/.style={black!50,yshift=-2.5pt,left=-1pt of #1,scale=0.5,fill=white}}
	
	\usepackage{here}
	\usepackage{caption}
	\usepackage{subcaption}

\let\oldtocsection=\tocsection

\let\oldtocsubsection=\tocsubsection

\let\oldtocsubsubsection=\tocsubsubsection

\renewcommand{\tocsection}[2]{\hspace{0em}\oldtocsection{#1}{#2}}
\renewcommand{\tocsubsection}[2]{\hspace{1em}\oldtocsubsection{#1}{#2}}
\renewcommand{\tocsubsubsection}[2]{\hspace{2em}\oldtocsubsubsection{#1}{#2}}

\makeatletter

\newcommand{\Rmnum}[1]{\expandafter\@slowromancap\romannumeral #1@}
\makeatother

\numberwithin{equation}{section}

\input xy
\xyoption{all}

\calclayout
\allowdisplaybreaks[3]

\theoremstyle{plain}
\newtheorem{prop}{Proposition}[section]

\newtheorem{theo}[prop]{Theorem}
\newtheorem{coro}[prop]{Corollary}

\newtheorem{lemm}[prop]{Lemma}

\theoremstyle{definition}
\newtheorem{defi}[prop]{Definition}

\newtheorem{rema}[prop]{Remark}

\newtheorem{mainthm}{Theorem}

\newlist{steps}{enumerate}{1}
\setlist[steps, 1]{label = Step \arabic*:}

\def\ra{\rightarrow}

\def\cC{{\mathcal C}}

\def\cG{{\mathcal G}}
\def\cH{{\mathcal H}}

\def\cO{{\mathcal O}}
\def\cP{{\mathcal P}}

\def\cV{{\mathcal V}}

\def\cX{{\mathcal X}}

\def\rk{{\mathrm{rk}}}

\def\bE{{\mathbb E}}
\def\bF{{\mathbb F}}
\def\bG{{\mathbb G}}

\def\bP{{\mathbb P}}

\def\bR{{\mathbb R}}
\def\bZ{{\mathbb Z}}

\def\bfP{{\mathbf P}}

\def\bfX{{\mathbf X}}

\def\H{\mathrm{H}}
\def\id{\mathrm{id}}

\def\Pic{\mathrm{Pic}}

\def\Hom{\mathrm{Hom}}
\def\Spec{\mathrm{Spec}}

\def\Pic{\mathrm{Pic}}
\def\GL{\mathrm{GL}}
\def\PGL{\mathrm{PGL}}

\def\Gal{\mathrm{Gal}}
\def\Aut{\mathrm{Aut}}

\def\Aff{\mathrm{Aff}}
\def\fod{\mathrm{FOD}}

\makeatother
\makeatletter

\author{Tianzhi Yang}
\address{Piazza dei Cavalieri, 7, 56126 Pisa, Italy}
\email{tianzhi.yang@sns.it}

\title{On the Fields of Moduli of Curves of Genus Six}

\begin{document}
\date{\today}
\begin{abstract}
The field of moduli of a variety $X$ over an algebraically closed field $K$ is defined as the fixed field of those automorphisms $\sigma$ of $K$ for which $X\cong X^{\sigma}$. A fundamental question is under what conditions a variety admits a model over its field of moduli. We investigate this problem for curves of genus $6$, by considering the stratification of the moduli space $M_6$. 
\end{abstract}

\maketitle
\setcounter{tocdepth}{2}

\section{Introduction}
We work over a field $k$ of characteristic $0$ with algebraic closure $K$. Let $(X,\xi)$ be a variety $X$ endowed with some additional structure $\xi$ over $K$, for instance a vector bundle, a polarization, a rational point, etc. Define the set of fields
\[
\fod=\{K/h/k \mid (X,\xi)\text{ admits a model over }h\},
\]
where $\fod$ stands for ``field of definition''. One may expect that there exists a minimal element in $\fod$, but this is in general false.

\subsection{Field of Moduli}
However, note that if $h\in \fod$, for every $\sigma\in \Gal(K/h)$, we have $(X,\xi)^{\sigma}\cong (X,\xi)$, where $(X,\xi)^{\sigma}$ is the Galois conjugation. So it is natural to make the following definition, which was first introduced by Matsusaka in \cite{Matsusaka1958}: let $\Delta\subset \Gal(K/k)$ be the subgroup that consists of elements $\sigma\in \Gal(K/k)$ with $(X,\xi)^{\sigma}\simeq (X,\xi)$. One can verify that $\Delta$ is an open subgroup, and the corresponding fixed field $k_{(X,\xi)}$ is called the \emph{field of moduli} of $(X,\xi)$. By definition, every element of $\fod$ contains $k_{(X,\xi)}$.

This naturally leads to the following question: does $k_{(X,\xi)}$ itself belong to $\fod$? Pioneering work on this topic was carried out by A. Weil \cite{Weil1956}, T. Matsusaka \cite{Matsusaka1958}, and G. Shimura \cite{Shimura1959}, and the question has continued to draw considerable attention; for example, in the setting of curves and abelian varieties, we may consult \cite{Murabayashi1996}, \cite{DebesDouai1997}, \cite{DebesEmsalem1999}, \cite{CardonaQuer2005}, \cite{Huggins2007}, \cite{Kontogeorgis2009}, \cite{Hidalgo2009}, \cite{Marinatto2013}. In addition, there is related work concerning Lie algebras; see \cite{demarche2026notecomplexliealgebras}.

More recently, G. Bresciani and A. Vistoli introduced a novel approach to this question using the theory of \emph{gerbes} \cite{BrescianiVistoli2024}. When the algebraic object $(X,\xi)$ has a finite automorphism group $\Aut(X,\xi)$, one can attach to it an algebraic gerbe $\cG_{(X,\xi)}$ over $k_{(X,\xi)}$, called the residue gerbe, which classifies twisted forms of $(X,\xi)$. The object $(X,\xi)$ descends to its field of moduli exactly when this gerbe is neutral, that is, when
\[
\cG_{(X,\xi)}(k_{(X,\xi)})\neq \emptyset .
\]
This perspective leads to several new applications. For example, in \cite{plane-curve}, G. Bresciani shows that a smooth complex plane curve $C$ of odd degree can be defined by a polynomial with real coefficients if and only if $C \simeq \bar{C}$. No elementary proof of this statement is known, and the corresponding claim fails in general for curves of even degree.

\subsection{Previous Results on Curves}
For curves of genus $0$, the problem is trivial. In the case of elliptic curves, it is a classical fact that the field of moduli is generated by the $j$-invariant, and furthermore that every elliptic curve admits a model over its field of moduli. Curves of genus $2$ are hyperelliptic; this situation is analyzed by Huggins in \cite{Huggins2007}, who showed that a hyperelliptic curve $C$ is not defined over its field of moduli only if $\Aut(C)/\langle \iota \rangle$ is a cyclic group, where $\iota\in \Aut(C)$ is the hyperelliptic involution. The result is later rephrased in terms of gerbes in \cite{bresciani-p1}. A genus $3$ curve is either hyperelliptic or a plane quartic, and the latter case is treated in \cite{plane-curve}.

These results rely on the fact that the residue gerbe $\cG_C$ admits a morphism to $B\PGL_n$. As an example, note that the automorphism group of a plane curve of degree $\geq 4$ is always a linear group. Thus one may take the algebraic structure $(X,\xi)$ above to be $(\bP^2_K, C)$; in this case the automorphism group $\Aut(\bP^2_K, C)$ is a finite subgroup of $\PGL_3$, and one checks that the residue gerbe $\cG_C$ associated with $C$ carries a faithful morphism to $B\PGL_3$. In general, let $\Gamma$ be an algebraic group defined over $k$, and consider a faithful morphism of gerbes $\cG \to B\Gamma$. In joint work with G. Bresciani, we constructed a framework to analyze when such a gerbe $\cG$ is neutral; see \cite{neutral-representation-dim-leq3}. We will make use of this framework in our study of fields of moduli of genus $6$ curves.

\subsection{Our Results} Let $C$ be a smooth curve of genus $6$ over $K$. The moduli space $M_6$ of genus $6$ curves admits the stratification
$$M_6 = U \sqcup Z_{<5} \sqcup Z_q \sqcup Z_h \sqcup Z_b \sqcup Z_t,$$
where $Z_q, Z_h, Z_b$, and $Z_t$ denote, respectively, the loci of smooth plane quintic curves, hyperelliptic curves, bielliptic curves, and trigonal curves. The open subset $U$ consists of those curves $C$ that are contained in a smooth del Pezzo surface $S_C$ of degree $5$, whereas $Z_{<5}$ parametrizes curves $C$ lying on a singular del Pezzo surface, in this case, the singularites on $S_C$ can only be one of the following ADE type
$$
    A_1,\quad 2A_1,\quad A_2,\quad A_2+A_1,\quad A_3,\quad A_4;
$$
see \cite{Zhao2024Moduli}.

Denote by $k_C$ the field of moduli of $C$, we first collect the three cases that are already known or partly known.

\begin{mainthm}
\label{thm:known-cases}
Let $C$ be a smooth curve of genus $6$ over $K$.
\begin{enumerate}[label=\textnormal{(\roman*)}]
    \item If $C$ is hyperelliptic and $\Aut(C)/\langle\iota\rangle$ is non-cyclic, then $C$ descends to $k_C$.
    \item If $C$ is a plane quintic, then $C$ descends to $k_C$.
    \item If $C$ is trigonal and does not descend to $k_C$, then $\Aut(C)$ is cyclic or a $C_3$-extension of a cyclic group.
\end{enumerate}
\end{mainthm}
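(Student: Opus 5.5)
Each part reduces, via the residue gerbe $\cG_C$, to a known neutrality criterion for gerbes with a faithful morphism to $B\PGL_n$ (or to the gerbe of a curve of genus $0$). The plan is to build, for each case, a canonical linear or projective structure attached to $C$, together with an equivariant morphism $\cG_C \to B\PGL_n$ or $\cG_C\to B\Aut(P)$. Then we quote the relevant earlier theorem.

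\textbf{Part (i).} For $C$ hyperelliptic, the canonical map $C\to C/\langle\iota\rangle\simeq \bP^1$ is intrinsic, hence defined over the field of moduli at the level of gerbes. So $\cG_C$ maps to the residue gerbe of the pair $(\bP^1, B)$, where $B$ is the branch divisor of degree $2g+2=14$. This morphism is a $\mu_2$-gerbe induced by $\langle\iota\rangle$. By Huggins' result \cite{Huggins2007}, or its gerbe-theoretic form in \cite{bresciani-p1}, $C$ descends whenever $\Aut(C)/\langle\iota\rangle$ is non-cyclic. The genus plays no role, so (i) is the genus-$6$ specialization of that theorem.

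\textbf{Part (ii).} A smooth plane quintic has a unique $g^2_5$, so its plane embedding is canonical up to $\PGL_3$. Then $\Aut(C)=\Aut(\bP^2,C)\subset\PGL_3$, and $\cG_C$ carries a faithful morphism to $B\PGL_3$. The required result is \cite{plane-curve}: a plane curve of degree $d$ coprime to $3$ descends to its field of moduli. The mechanism is that $\cO(1)|_C$ is a canonical line bundle of degree $5$, which kills the Brauer obstruction of the associated Severi–Brauer surface. Since $\gcd(5,3)=1$, this gives (ii). If only the $\PGL_3$ framework of \cite{neutral-representation-dim-leq3} is available, I would instead check that degree $5$ prevents the non-neutral cases listed there.

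\textbf{Part (iii).} This is the main obstacle. A trigonal curve of genus $6$ that is not a plane quintic has a unique $g^1_3$ by Maroni theory. The scroll containing its canonical model is therefore canonical, namely a Hirzebruch surface $\bF_n$ with $n\in\{0,2\}$, and the $g^1_3$ gives a canonical map $C\to\bP^1$. So $\Aut(C)$ acts on $\bP^1$ through a finite group $H\subset\PGL_2$. The kernel $N$ consists of automorphisms preserving every fiber, and a fiber of degree $3$ forces $N\subseteq C_3$, acting on the Galois closure of the trigonal cover.

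We obtain a morphism $\cG_C\to\cG'$, where $\cG'$ is the residue gerbe of $\bP^1$ with its branch data and has band $H$. This morphism is a gerbe banded by $N$. If $H$ is non-cyclic, \cite{bresciani-p1} says the residue gerbe of the marked $\bP^1$ is neutral. The hard step is lifting a $k_C$-point through the $N$-gerbe. I would attempt this by using that the $g^1_3$ and the scroll $\bF_n$ are defined over the point. Twisted forms of $\bF_n$ with a rational section-class then descend, and one realizes $C$ as a divisor in a fixed class. The class $3\sigma+mf$ is Galois-invariant, and its linear system is defined over the field, so a rational model of $C$ exists.

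Consequently, failure of descent forces $H$ to be cyclic, and hence $\Aut(C)$ is cyclic or an extension of a cyclic group by $N\simeq C_3$. The delicate point is controlling the obstruction when $n=0$, where the two rulings could be swapped. Trigonality picks out one ruling, since the $g^1_3$ is unique, so that swap does not occur.
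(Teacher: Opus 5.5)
\textbf{Part (iii): the lifting step fails.} You correctly isolate the difficulty in (iii), but the step you propose for it does not work. When $N$ is trivial your argument is fine, since $\cG_C\to B\PGL_2$ is then faithful and $\cG_C$ is the image gerbe $\cH$. The gap is the case $N\cong C_3$. There, after getting a $k_C$-point of $\cH$ for $H$ non-cyclic, you must lift it through the relative $C_3$-gerbe $\cG_C\to\cH$. The obstruction to lifting a given point is a class in $\H^2$ with coefficients in a twisted form of $C_3$.

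Your argument runs as follows: the scroll $\bF_n$ and the class $3\sigma+mf$ are defined over $k_C$, so the linear system is defined over $k_C$, so $C$ has a rational model. This does not touch that class. Rationality of the class and of its linear system carries no information. The members isomorphic to $C$ form a Galois-stable $\Aut(\bF_n)$-orbit, and asking whether that orbit contains a $k_C$-rational member is exactly the field-of-moduli problem for the pair (scroll, curve). Since the scroll is intrinsic to $C$, that problem is equivalent to the original one, so the step is circular. There is also a further problem: a $k_C$-point of $\cH$ only yields a form of the base $\bP^1$. The scroll is built from the canonical model of $C$, so producing a $k_C$-form of it already presupposes descent data on $C$.

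\textbf{The missing idea.} No lifting is needed. If $I\cong C_3$, the trigonal map is Galois, so $C/I\cong\bP^1_K$ and hence $C/G\cong\bP^1_K/H$. Therefore the coarse space $\bfX$ of the universal curve $\cX\to\cG_C$ is identified with the coarse space $\bfP$ of the pulled-back $\bP^1$-bundle $\cP\to\cH$. By \cite[Theorem 5.2]{neutral-representation-dim-leq3}, $\bfP\cong\bP^1_{k_C}$ when $H$ is non-cyclic. Because $G$ acts faithfully on $C$, the map $\cX\to\bfX$ is birational and gives a rational section $\bfX\dashrightarrow\cX$. Lang--Nishimura then gives $\cX(k_C)\neq\emptyset$, hence $\cG_C(k_C)\neq\emptyset$ directly. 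This is the paper's argument; it never uses the scroll or the Maroni type, and your worry about swapping rulings on $\bF_0$ becomes irrelevant.

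\textbf{A misquotation in (ii).} The result of \cite{plane-curve} requires the degree to be prime to $6$, not merely to $3$. Coprimality with $3$ only shows that a model lying in some Brauer--Severi surface over $k_C$ actually lies in $\bP^2_{k_C}$. The existence of a model at all, i.e.\ neutrality of $\cG_C$, is where oddness of the degree enters, and it fails in general for even degree. Since $\gcd(5,6)=1$, your conclusion in (ii) still stands.
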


As previously noted, the hyperelliptic case is due to Huggins \cite[Theorem~5.4]{Huggins2007}, and the statement for plane quintics follows from \cite[Theorem~1.1]{plane-curve}. For trigonal curves, one can appeal to \cite[Theorem~3.1 and Corollary~3.5]{ArtebaniQuispe2012}: a non-normal 3-gonal curve descends to its field of moduli, whereas in the normal situation the deck group is $C_3$ and the curve descends whenever the reduced automorphism group is non-cyclic. Note, however, that a trigonal genus $6$ curve is not necessarily 3-gonal in the sense of \cite{ArtebaniQuispe2012}. In \cite{BujalanceCosta2015}, the authors showed that the automorphism group of a $p$-gonal genus $g$ pseudoreal curve—i.e.\ a curve with field of moduli $\bR$ that nevertheless does not admit a model over $\bR$—is either cyclic or of the form $C_n\rtimes C_p$, under the hypothesis $g>(p-1)^2$ (which guarantees uniqueness of the pencil $g_p^1$). In Section~\ref{sect:proof}, we give a much simpler proof of the trigonal case. Our argument uses the uniqueness of the trigonal pencil together with the induced morphism between the associated residue gerbes, and it treats both Maroni types in a uniform way. Furthermore, this method generalizes the result of \cite{BujalanceCosta2015} to arbitrary base field $k$.

Our new results concern the remaining three strata.

\begin{mainthm}
\label{thm:new-cases}
Let $C$ be a smooth curve of genus $6$ over $K$.
\begin{enumerate}[label=\textnormal{(\roman*)}]
    \item If $C$ is bielliptic, then $C$ descends to $k_C$.
    \item If $C$ lies on a smooth quintic del Pezzo surface and does not descend to $k_C$, then either $\Aut(C)\cong C_2$, or $\Aut(C)\cong D_{10}$ and $\sqrt{-1}\notin k$.
    \item If the uniquely determined quintic del Pezzo surface $S_C$ is singular of type different from $2A_1$, then $C$ descends to $k_C$.
\end{enumerate}
\end{mainthm}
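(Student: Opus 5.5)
Each of the three parts of Theorem~\ref{thm:new-cases} will follow the same pattern. First I find a canonical auxiliary object attached to $C$, preserved by every automorphism and compatible with Galois conjugation. This gives a morphism of residue gerbes $\cG_C\to\cG_{Y}$, where $Y$ is the auxiliary structure. Then I show either that $\cG_Y$ is neutral and the fibre contributes a rational point, or that $\cG_C$ maps faithfully to some $B\Gamma$ with $\Gamma$ a small linear group, and apply the neutrality criteria of \cite{neutral-representation-dim-leq3}.

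\textbf{Bielliptic case.} For $g=6>5$ the bielliptic involution $\tau$ is unique (Castelnuovo--Severi), so $\langle\tau\rangle$ is central in $\Aut(C)$. The quotient $E=C/\tau$ is a genus~$1$ curve carrying a branch divisor $B$ of degree $2g-2=10$. The pair $(E,B)$ is canonically attached to $C$, so we get a map $\cG_C\to\cG_{(E,B)}$, which is a $\mu_2$-gerbe over its image.

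I first descend $(E,B)$ to $k_C$. The automorphism group of $(E,B)$ acts on $E$ fixing a degree-$10$ divisor. I would pass to $\Pic^1$, or to the line bundle $\cO_E(B)$ of degree $10$, to get a Brauer--Severi variety of dimension $9$ with a rational divisor class. That forces the relevant obstruction to die, using that $\gcd$-type arguments on the degrees $10$ and $1$ produce a rational point on the genus $1$ curve.

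Next I lift from $(E,B)$ to $C$. The double cover is determined by a square root $L$ of $\cO(B)$, with $L$ of degree $5$. The lifting obstruction is a $\mu_2$-class, and since $L$ has odd degree it can be normalized to be rational.

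I expect this step to be the main obstacle: controlling the interaction of the $\mu_2$-gerbe with the degree-$5$ choice, and handling $E$ with extra automorphisms ($j=0,1728$) via explicit case analysis.

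\textbf{Smooth del Pezzo case.} The quintic del Pezzo surface $S_C$ is unique and $\Aut(S_C)=S_5$. Hence $\Aut(C)\subset S_5$, and $\cG_C\to\cG_{S_C}=BS_5$, which is neutral. $S_C$ has a model over $k$ with a rational point, and $S_5$ acts on it. Descent of $C$ then amounts to lifting a homomorphism $\Gal\to N_{S_5}(\Aut C)/\Aut C$ to the normalizer, twisted appropriately.

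I would enumerate the subgroups of $S_5$ that occur as $\Aut(C)$, via the known list for genus $6$ curves on $S_C$, and for each check the extension $1\to \Aut C\to N\to N/\Aut C\to 1$. Equivalently, I would use the faithful representation of $S_5$ on $H^0(\omega_C)$, of dimension $6$, or the anticanonical embedding $S_C\subset\bP^5$, and invoke the criteria of \cite{neutral-representation-dim-leq3}. Whenever the extension splits, or the group has an odd-dimensional fixed-point-free-compatible structure, we get neutrality. The surviving cases are $C_2$ and $D_{10}$. For $D_{10}$ the obstruction is a quaternion-algebra-type class killed exactly when $\sqrt{-1}\in k$.

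\textbf{Singular del Pezzo case.} The singularity type is an invariant, and the configuration of singular points and $(-1)$-curves on the minimal resolution is canonical. For types $A_1$, $A_2$, $A_3$, $A_4$, $A_2+A_1$ there is a \emph{unique} singular point of some given type. That point is fixed by $\Aut(C)$ and defined over $k_C$, giving a rational point on the coarse quotient.

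Projecting from this point, or blowing down along the unique distinguished curve, yields a canonical $g^1_d$ or a map to $\bP^2$ with a rational point. I would then apply Bresciani's rational-point criterion: a rational point in the smooth locus of the quotient $C/\Aut(C)$, or of $S_C/\Aut$, forces $\cG_C$ to be neutral. If the fixed point is singular on the quotient, I would instead use the tangent-space representation of dimension $\le 2$ and \cite{neutral-representation-dim-leq3}.

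The $2A_1$ case is excluded because the two singular points can be swapped, and no rational point is produced.
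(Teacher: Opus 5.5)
Your part (ii) is essentially the paper's argument. The paper builds the faithful morphism $\cG_C\to BS_5$ from the Isom-torsor to a split model and uses the normalizer as a gate. The normalizer extension splits for every conjugacy class except $C_2^d$ (normalizer $D_8$) and $D_{10}$ (normalizer $F_{20}$). For $D_{10}$ the question reduces to embedding a quadratic extension in a cyclic quartic one, whose obstruction is indeed $(a,-1)$. The phrase about ``odd-dimensional fixed-point-free-compatible structure'' plays no role. Parts (i) and (iii), however, have genuine gaps.

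In (i) your second step is sound. If $(E,B,[L])$ has a model over $k$, the Brauer obstruction of $[L]$ is killed by $2$ (because $2[L]=[\cO_E(B)]$) and by $h^0(L)=5$, and any section with divisor $B$ then gives a model of $C$. The intermediate object has to be $(E,B,[L])$, not $(E,B)$, since $\Aut(C)/\langle\tau\rangle$ is only the stabilizer of $[L]$ in $\Aut(E,B)$. The first step, neutrality of the residue gerbe of $(E,B,[L])$, is not proved:
\begin{itemize}
\item the dimension-$9$ Brauer--Severi variety presupposes a model of $(E,B)$ over $k$;
\item there is no rational divisor class of degree $1$ on which to run a gcd argument;
\item a rational point on $E$ would not by itself descend the pair.
\end{itemize}
This is exactly where the group $\Aut(C)/\langle\tau\rangle$ (translations by a subgroup of $E[5]$, extended by automorphisms fixing a point) must be handled, and you leave it as an open case analysis. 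The paper's device is the Hodge bundle. The involution $\tau$ splits $\bE=\bE_+\oplus\bE_-$, giving a faithful $\cG_C\to B(\bG_m\times\GL_5)$. The $\bG_m$-part factors through a neutral gerbe and absorbs every non-translation, including $j=0,1728$. What remains is $J=\ker\chi_+\cong C_2$ or $C_{10}$ acting on $V=\H^0(E,\omega_E\otimes L)$, where $\tau$ acts by $-1$ and the order-$5$ subgroup acts by the regular representation. The cyclic criterion $\dim V-\dim V^{C_p}\not\equiv 0\pmod p$ then gives neutrality.

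In (iii) the proposed mechanism does not work.
\begin{itemize}
\item Since $C$ is a smooth Cartier divisor, it avoids the singular points of $S_C$, so the distinguished singular point gives no point on the coarse curve.
\item On the coarse surface, its image is a quotient of an $A_n$ singularity by $\Aut(C)$, which is generally not a regular point, so Lang--Nishimura does not apply. Moreover, the residual gerbe at that point is $\cG_C$ itself, so the argument is circular.
\item The Zariski tangent space at an $A_n$ point is $3$-dimensional, not $\le 2$.
\item The natural $2$-dimensional object is the local orbifold group, an extension of $\Aut(C)$ by $\mu_{n+1}\subset\mathrm{SL}_2$. Its neutrality in $\GL_2$ is only a sufficient condition, and it already fails for the bare local groups. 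The scalar $\mu_2$ of $A_1$ is not neutral, since its normalizer quotient is $\GL_2/\mu_2\simeq\PGL_2\times\bG_m$ while $\H^1(F,\GL_2)=1$; nor is $\langle\mathrm{diag}(i,-i)\rangle$ of $A_3$.
\item Nothing supports the claimed canonical $g^1_d$ or plane model obtained by projection.
\end{itemize}
The missing idea is global. The Isom-torsor to a split model $S_\tau$ gives a faithful $\cG_C\to B\Aut(S_\tau)$, and the explicit structure of $\Aut(S_\tau)$ does the rest:
\begin{itemize}
\item For $A_4$, $A_3$, $A_2+A_1$ the group is split connected solvable. A finite $G$ conjugates into the torus, and $N(G)/G=U^G\rtimes(T/G)$ is special.
\item For $A_2$ ($B_2\rtimes C_2$) and $A_1$ ($\bG_m\times S_3$), one checks surjectivity of $\H^1(F,N)\to\H^1(F,N/G)$ directly.
\end{itemize}
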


The proof is organized around distinguished structures and residue gerbes. For each case, we construct a faithful morphism
$$
    \cG_C\ra B\Gamma
$$
over $k_C$, where $\Gamma$ is the automorphism group of an distinguished ambient space or the linear group determined by an distinguished representation. Here by distinguished, we mean those additional algebraic structures on $C$ that is preserved by every Galois-semilinear automorphism; see section~\ref{gerbe-map-del-pezzo-case}. For the bielliptic case, the bielliptic involution splits the Hodge bundle into eigenspaces and gives $\Gamma=\bG_m\times\GL_5$. For the smooth del Pezzo surface case, the ambient surface gives us $\Gamma=S_5$, while for the singular del Pezzo surface case, we have $\Gamma=\Aut(S_\tau)$ for a split surface model $S_\tau$. 

We then use the methods of \cite{neutral-representation-dim-leq3} to study when the image of $\Aut(C)$ is a \emph{neutral subgroup} of $\Gamma(K)$. If $\Aut(C)$ is neutral, $C$ descends to its field of moduli. Section~\ref{sect:neutral-S_5} provides a classification of the neutral subgroups of $S_5$. The only exceptions are the double-transposition subgroup $C_2$, which is never neutral, and the transitive subgroup $D_{10}$, which is neutral precisely when $\sqrt{-1}$ lies in the base field. In the singular case, the automorphism groups described in \cite{Virin2024} are split solvable for the singularity types $A_4$, $A_3$, and $A_2+A_1$, we show that every finite subgroup of a connected split solvable group is neutral; the cases $A_2$ and $A_1$ call for additional normalizer computations. The sole remaining exceptional case is the type $2A_1$.

Under the guidance of \cite[Proposition~5.0.5]{Huggins2006PHD-thesis}, it is straightforward to find hyperelliptic curves of genus $6$ that are not defined over their fields of moduli. For trigonal case, such a counterexample also exists by \cite[Theorem 6]{BujalanceCosta2015}. In \cite{ARTEBANI20172383}, the authors showed that there exists a genus $6$ pseudoreal curve with automorphism group $D_{10}$. A closer look of their example shows that this curve belongs to the smooth del Pezzo locus $U\subset M_6$, so it agrees with our result since $\sqrt{-1}\notin \bR$. For the remaining cases, no counterexamples have yet been found, although it seems likely that some exist.

\subsection{Structure of the paper} In section~\ref{sect:preliminary}, we fix some terminology concerning gerbes and recall the definition of neutral subgroups. In section~\ref{sect:the-entire-section-classification}, we recall the stratification of genus $6$ curves. In section~\ref{sect:gerbes}, we find the desired distinguished structures and construct the morphisms $\cG\ra B\Gamma$. Section~\ref{sect:neutral-S_5} classifies the neutral subgroups of $S_5$. The main theorems are proved in section~\ref{sect:proof}; the analysis of the automorphism groups of singular quintic del Pezzo surfaces is included in the corresponding part of that section.

\subsection{Convention}
Throughout, we use $D_{2n}$ to denote the dihedral group with $2n$ elements.

\section{Preliminary on gerbes}\label{sect:preliminary}
In this section, we give a brief review of the basic properties of gerbes and of several results from \cite{neutral-representation-dim-leq3} that will be used later in the paper. This section does not contain any new material.
\subsection{Gerbes}
Recall that a gerbe $\cG$ over $k$ is a stack on the fppf site $(\Aff/k)$ of affine $k$-schemes which is locally non-empty and for which any two objects become isomorphic after passing to an fppf cover. Throughout this article, gerbes are assumed to be affine and of finite type; by definition, this means that the diagonal morphism is affine and of finite type, and that there exists an affine chart. Any such gerbe $\cG$ is automatically smooth, so there is a separable extension $k'/k$ with $\cG(k')\neq \emptyset$, see \cite[Corollary 3.2]{borne-vistoli}. A gerbe is said to be finite, étale, or unramified, respectively, if its inertia is finite, étale, or unramified, respectively, \cite[Tag 050P]{stacks-project}, \cite[Definition 8.1.17]{olsson}. 

The fundamental group of an affine gerbe $\cG$ is defined as the automorphism group of a geometric point $\Spec K \to \cG$. Since any two geometric points are isomorphic \cite[Proposition 3.1(c)]{borne-vistoli-nori}, this definition does not depend on the chosen point.

We say that a gerbe $\cG$ over $k$ is neutral if it has a $k$-rational point; in this situation, $\cG$ is isomorphic to the classifying stack $B\mathfrak{G}$ of some group scheme $\mathfrak{G}$, see \cite[Chapter III 2.2.6]{Giraud1971}.

A morphism $f:\cG\ra \cH$ of affine gerbes is faithful (resp. locally full), if for some field extension $h/k$ and some object $a\in \cG(h)$, the induced homomorphism $\underline{\Aut}_{\cG}(a)\ra \underline{\Aut}_{\cH}(f(a))$ of group schemes is a monomorphism (resp. faithfully flat), \cite[Definition 3.4]{borne-vistoli}.

\begin{lemm}\cite[Definition 3.8, Proposition 3.9]{borne-vistoli}\label{canonical-factorization}
    Let $\cG\ra \cH$ be a morphism of affine gerbes, there exists a factorization $\cG\ra \cG'\ra \cH$ such that $\cG\ra \cG'$ is locally full and $\cG'\ra \cH$ is faithful, and if $\cG\ra \cG''\ra \cH$ is another such factorization, there exists an equivalence $\cG'\ra \cG''$ making the obvious diagram commute.
\end{lemm}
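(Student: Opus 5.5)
The plan is the standard "image stack" construction. Let $f\colon \cG\ra\cH$ be a morphism of affine gerbes over $k$. We take $\cG'$ to be the fppf stackification of the prestack whose objects over $T$ are those of $\cG(T)$. Morphisms $a\ra b$ in this prestack are to be sections of the image sheaf of
\[
\underline{\Hom}_{\cG}(a,b)\ra\underline{\Hom}_{\cH}(f(a),f(b)).
\]
Equivalently, we may replace $\underline{\Hom}_{\cG}(a,b)$ by its quotient by the kernel of $\underline{\Aut}_{\cG}(a)\ra\underline{\Aut}_{\cH}(f(a))$. Over a point, this amounts to rigidifying $\cG$ along the kernel group scheme, and this kernel is normal and flat-locally well defined.

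\begin{steps}
\item We check that the kernel subgroups $N_a\subset\underline{\Aut}_{\cG}(a)$ are compatible with conjugation by arrows of $\cG$. This holds because $f$ is a functor. Hence they form a normal subgroup stack of the inertia, flat over $k$; flatness holds since over a field every subgroup scheme is flat, and we pass to a point $a\in\cG(h)$ with $h/k$ separable.
\item We form the rigidification $\cG'=\cG\!\!\fatslash N$, so that $\cG\ra\cG'$ is locally full, being fppf-locally $BG\ra B(G/N)$. Then $f$ factors through $\cG'$ by the universal property of rigidification, since $N$ maps trivially.
\item We verify that $\cG'$ is again an affine gerbe of finite type and that $\cG'\ra\cH$ is faithful, because $G/N\hookrightarrow \underline{\Aut}_{\cH}(f(a))$ is a monomorphism.
\item For uniqueness, suppose $\cG\ra\cG''\ra\cH$ is another such factorization. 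Local fullness of $\cG\ra\cG''$ shows that $\cG''$ is fppf-locally $B$ of a quotient of $G$. Faithfulness of $\cG''\ra\cH$ forces that quotient to be $G/N$. The universal property of rigidification then gives a morphism $\cG'\ra\cG''$, which is an isomorphism on automorphism group schemes of a point and hence an equivalence of gerbes.
\end{steps}

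The main obstacle is Step 3: showing that the rigidified stack is algebraic, affine and of finite type. This needs the fact that the quotient $G/N$ of an affine group scheme of finite type is affine of finite type (true over a field), together with fppf descent for affineness of the diagonal. Since the statement is exactly \cite[Definition 3.8, Proposition 3.9]{borne-vistoli}, in the paper one would simply cite that reference.
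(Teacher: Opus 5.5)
Your argument is correct. There is, however, no proof in the paper to match it against: the lemma is simply quoted from Borne--Vistoli (Definition~3.8 and Proposition~3.9 there), so what you give is a self-contained substitute for a citation.

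Your route is to rigidify $\cG$ along $N=\ker\bigl(I_{\cG}\ra f^*I_{\cH}\bigr)$. Uniqueness then follows because, for another factorization, $\ker(G\ra G'')\subseteq N$ trivially, while $N\subseteq\ker(G\ra G'')$ because $G''\ra\underline{\Aut}_{\cH}(f(a))$ is a monomorphism. This is a clean proof under the paper's standing convention that gerbes are affine and of finite type, since then $N$ is flat and of finite presentation over $\cG$ and the rigidification theorem applies.

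What the citation buys is generality. Borne and Vistoli state the result for arbitrary affine fpqc gerbes, which need not be of finite type (profinite gerbes such as the Nori fundamental gerbe are what that paper is about). For those, the rigidification theorem for algebraic stacks is not available as stated. Your first description nevertheless works verbatim in that generality: stackify the prestack whose Hom sheaves are the images $\underline{\mathrm{Isom}}_{\cG}(a,b)/N_a\subseteq\underline{\mathrm{Isom}}_{\cH}(f(a),f(b))$. The gerbe axioms and affineness of the diagonal then descend from the fact that these sheaves are fpqc-locally base changes of $G/N$, which is affine over a field.

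For the same reason, Step~3 is less of an obstacle than you suggest. With the paper's definition of an affine gerbe of finite type, no algebraicity theorem is needed, only this descent together with the chart $\Spec h\ra\cG\ra\cG'$.

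Two small points should be tightened. First, the flatness required for rigidification is that of $N\ra\cG$, not flatness ``over $k$''. It is checked on the flat chart $\Spec h\ra\cG$, where $N$ becomes a group scheme over $h$. Second, in Step~4 the triviality of the image of $N$ in the inertia of $\cG''$ is needed for all objects over all bases, not just at one point. This reduces to a single point because it can be checked fpqc-locally on $\cG$, and because faithfulness of $\cG''\ra\cH$ does not depend on the chosen point.
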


The language of gerbes is particularly well suited to studying the relationship between field of moduli and field of definition. Concretely, saying that the pair $(X,\xi)$ admits a field of definition over an extension $h/k$ means that there exists a twisted form of $(X,\xi)$ over $h$, where $(X,\xi)$ denotes a variety $X$ over $K$ equipped with some additional structure, as specified in the introduction. Hence the problem reduces to understanding twisted forms of $(X,\xi)$, and these are classified in terms of gerbes, see \cite[\S 5]{BrescianiVistoli2024}, \cite[\S 2]{bresciani-p1}. 

The gerbe $\cG_{(X,\xi)}$ that classifies twisted forms of $(X,\xi)$ is called the residue gerbe of $(X,\xi)$. It is an algebraic stack over $k$, and an fppf-gerbe over a field extension $k_{(X,\xi)}/k$, see \cite[Proposition 3.10]{BrescianiVistoli2024}. This field $k_{(X,\xi)}$ is precisely the field of moduli of $(X,\xi)$. In this framework, the statement that $(X,\xi)$ has a field of definition $h/k$ is equivalent to $\cG_{(X,\xi)}(h)\neq \emptyset$. Moreover, after base changing to $k=k_{(X,\xi)}$, the pair is defined over its field of moduli if and only if $\cG_{(X,\xi)}$ is neutral. Thus, identifying conditions under which this gerbe is neutral is crucial.

\subsection{Neutral Subgroups}\label{sect:neutral-subgroups} Let $\Gamma$ be an affine group scheme of finite type over $k$, and let $j:G\hookrightarrow \Gamma(K)$ be a finite subgroup, assumed to be stable under the Galois action.

\begin{defi}\cite[Definition 3.3]{neutral-representation-dim-leq3}\label{def:ggerbe}
    A \emph{$j$-gerbe} is a faithful morphism of gerbes $\cG\ra B\Gamma$ over some field extension $k'/k$, such that the image in $\Gamma$ of the automorphism group of one (hence of every) geometric point of $\cG$ is conjugate to $G$ inside $\Gamma(K)$.
\end{defi}

We call a finite subgroup $j:G\hookrightarrow \Gamma(K)$ \emph{neutral} if, for every $j$-gerbe $\cG\ra B\Gamma$ over some extension $k'/k$, one has $\cG(k')\neq \emptyset$; see \cite[Definition 3.4]{neutral-representation-dim-leq3}. Thus, neutrality is a property of the specific embedding $j$, rather than of the abstract group $G$ alone. Key examples of such a group scheme $\Gamma$ can be $\GL_n,\PGL_n,S_n$.

To find neutral subgroups of $\Gamma(K)$, it can be convenient to replace $\Gamma$ by a suitable smaller subgroup.

\begin{defi}\cite[Definition 4.1]{neutral-representation-dim-leq3}\label{def:gate}
    Let $j:G\hookrightarrow \Gamma(K)$ be a finite subgroup. A subgroup scheme $\Pi\subset \Gamma$ over $k$ is called a \emph{gate} for $G$ if $G\subset \Pi(K)$ and, for every $j$-gerbe $\cG\ra B\Gamma$ over some extension $k'/k$, there exists a factorization $\cG\ra B\Pi \ra B\Gamma$ such that the induced map $\cG\ra B\Pi$ is a $j'$-gerbe with respect to the embedding $G\subset \Pi(K)$.
\end{defi}
Although finding gates can be hard, any finite subgroup $G\subset \Gamma(K)$ always has a gate available by default, namely its normalizer.

\begin{prop}\cite[Proposition 4.13]{neutral-representation-dim-leq3}\label{prop:normalizer}
    Let $\mathfrak{G}\subset \Gamma$ be a finite subgroup scheme. Then the normalizer of $\mathfrak{G}$ is a gate for $\mathfrak{G}(K)\subset \Gamma(K)$.
\end{prop}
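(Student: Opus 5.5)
The idea is to read a lift of $\cG\ra B\Gamma$ to $B N$, where $N=N_\Gamma(\mathfrak{G})$, as a reduction of structure group of the $\Gamma$-torsor classified by $\cG\ra B\Gamma$. The canonical reduction should come from the inertia of $\cG$, which determines inside the twisted form of $\Gamma$ a subgroup that is locally conjugate to $\mathfrak{G}$.

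Concretely, let $\cG\ra B\Gamma$ be a $j$-gerbe over $k'/k$, with $j$ the inclusion $\mathfrak{G}(K)\subset\Gamma(K)$. This morphism corresponds to a $\Gamma$-torsor $P\ra\cG$. Write $\Gamma_P=P\times^{\Gamma}\Gamma$ for the inner twist of $\Gamma$ by $P$, a group scheme over $\cG$ which is locally isomorphic to $\Gamma$. The inertia $I_\cG\ra\cG$ acts on $P$, and this gives a homomorphism $I_\cG\ra\Gamma_P$ of group schemes over $\cG$. Faithfulness of $\cG\ra B\Gamma$ says exactly that this homomorphism is a monomorphism. Its image is therefore a finite subgroup scheme $\mathcal{H}\subset\Gamma_P$, and $I_\cG\cong\mathcal{H}$.

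Next I would use the fact that $N$ is a closed subgroup of the affine group $\Gamma$, so the fppf quotient $\Gamma/N$ exists as a scheme. The orbit map $gN\mapsto g\mathfrak{G}g^{-1}$ identifies $\Gamma/N$ with the functor of subgroup schemes of $\Gamma$ that are fppf-locally conjugate to $\mathfrak{G}$. Since $g\mathfrak{G}g^{-1}=\mathfrak{G}$ exactly when $g\in N$, the map $\Gamma/N\ra\{\text{subgroups}\}$ is a monomorphism onto this conjugacy class, which is what the identification needs.

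I then check that $\mathcal{H}$ is fppf-locally on $\cG$ conjugate to $\mathfrak{G}$. After passing to a geometric point and trivializing $P$, the fibre of $\mathcal{H}$ is the image of the fundamental group. By the definition of $j$-gerbe this image is conjugate in $\Gamma(K)$ to $G=\mathfrak{G}(K)$. In characteristic $0$ the group $\mathfrak{G}$ is finite étale, so its $K$-points determine it, and the two subgroup schemes are conjugate over $K$. Spreading this out gives the local statement over $\cG$. I expect this step to be the main obstacle: passing from a conjugacy statement at geometric points to an fppf-local one on the stack. Here I would use that $I_\cG$ is finite étale and that $\cG$ is a gerbe, so all geometric points are locally isomorphic.

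Given that local statement, $\mathcal{H}$ defines a section of the twisted quotient $P\times^{\Gamma}(\Gamma/N)=P/N$ over $\cG$, by fppf descent along $\cG$. A section of $P/N$ is the same as a reduction of $P$ to an $N$-torsor, which is the required factorization $\cG\ra BN\ra B\Gamma$. The map $\cG\ra BN$ is still faithful, because its composite with $BN\ra B\Gamma$ is faithful.

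Finally, the reduction was chosen so that, at a geometric point and in a trivialization of the $N$-torsor, the inertia maps exactly onto $\mathfrak{G}(K)$. Any two such trivializations differ by an element of $N(K)$. Hence the image is $N(K)$-conjugate to $G$, so $\cG\ra BN$ is a $j'$-gerbe for $G\subset N(K)$, and $N$ is a gate.
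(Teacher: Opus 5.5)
Your proposal is correct. The paper gives no proof of its own here: the statement is quoted from \cite[Proposition~4.13]{neutral-representation-dim-leq3} and used as a black box. So there is no in-paper argument to compare against, and your reduction-of-structure-group argument serves as a self-contained proof.

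An equivalent stack-theoretic phrasing may help. Regard $P$ as a $k'$-space with $\Gamma$-action and $[P/\Gamma]\simeq\cG$. Then the $N$-torsor cut out by your section of $P/N$ is exactly the locus $P^{\mathfrak{G}}$ of points whose stabilizer equals $\mathfrak{G}$, and the factorization is $\cG\simeq[P^{\mathfrak{G}}/N]\to BN$.

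The step you flag as the main obstacle is easier than you expect, and it is worth making precise rather than "spreading out":
\begin{enumerate}[label=\textnormal{(\roman*)}]
    \item Since $\cG$ is a finite-type gerbe over $k'$, it has an object over a finite separable extension $k''/k'$.
    \item The map $\Spec k''\to\cG$ is already an fppf (indeed finite \'etale) cover. So local conjugacy only has to be checked over a single field.
    \item There, after trivializing $P$, the transporter $\{g : g\mathfrak{G}g^{-1}=\mathcal{H}\}$ is a closed subscheme of $\Gamma_{k''}$ of finite type.
    \item It is nonempty over an algebraically closed field, by the $j$-gerbe hypothesis together with \'etaleness of $\mathfrak{G}$ in characteristic $0$. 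Hence it has a point over a finite extension of $k''$.
\end{enumerate}
This also covers the case where $k'/k$ is transcendental (as for $k((t))$ or $K((x))((y))$, which the paper feeds into this machinery). There, geometric points of $\cG$ live over algebraically closed fields larger than $K$, a point your sketch glosses over.
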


\begin{coro}
    If $G\subset \Gamma(K)$ is self-normalizing and Galois invariant, then $G$ is a neutral subgroup of $\Gamma(K)$.
\end{coro}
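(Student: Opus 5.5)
The corollary follows by combining Proposition~\ref{prop:normalizer} with the observation that, when $\Pi=G$ is itself a finite étale group scheme, a $j'$-gerbe over $B\Pi$ is forced to be neutral. I proceed in three steps.

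\textbf{Step 1: $G$ is a subgroup scheme.} Since $G\subset\Gamma(K)$ is finite and stable under $\Gal(K/k)$, Galois descent for finite étale schemes gives a finite étale subgroup scheme $\mathfrak{G}\subset\Gamma$ over $k$ with $\mathfrak{G}(K)=G$. In characteristic $0$ every finite group scheme is étale, so nothing more is needed here.

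\textbf{Step 2: $\mathfrak{G}$ is a gate for $G$.} The normalizer $N_\Gamma(\mathfrak{G})$ is a closed subgroup scheme of $\Gamma$. Since $k$ has characteristic $0$, it is smooth, so it is determined by its $K$-points. Those $K$-points form the normalizer of $G$ in $\Gamma(K)$, which equals $G$ by hypothesis. Hence $N_\Gamma(\mathfrak{G})=\mathfrak{G}$. Proposition~\ref{prop:normalizer} then says that $\mathfrak{G}$ is a gate for $G\subset\Gamma(K)$.

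\textbf{Step 3: every $j$-gerbe is neutral.} Take any $j$-gerbe $\cG\ra B\Gamma$ over $k'/k$. By the gate property it factors through a $j'$-gerbe $\cG\ra B\mathfrak{G}_{k'}$. This map is faithful. The image of the geometric automorphism group is conjugate to $G$ inside $\mathfrak{G}(K)=G$, so it equals all of $\mathfrak{G}(K)$. Because $\mathfrak{G}$ is étale, this image statement on $K$-points already forces the map on automorphism group schemes, $\underline{\Aut}(a)\ra\mathfrak{G}_K$, to be faithfully flat as well as a monomorphism, hence an isomorphism. A morphism of gerbes that induces isomorphisms on automorphism groups is an equivalence, for instance by the uniqueness in Lemma~\ref{canonical-factorization}. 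Therefore $\cG\simeq B\mathfrak{G}_{k'}$, and this stack has the trivial torsor as a $k'$-point. So $\cG(k')\neq\emptyset$, and $G$ is neutral.

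The step needing the most care is Step~2, namely passing from self-normalizing on $K$-points to equality of group schemes. The argument relies on characteristic $0$: there the normalizer is reduced, hence smooth, so its $K$-points determine it.
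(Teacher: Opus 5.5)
Your proposal is correct and follows essentially the same route as the paper. Proposition~\ref{prop:normalizer} makes the normalizer a gate, and the normalizer coincides with $\mathfrak{G}$. The resulting $j'$-gerbe $\cG\ra B\mathfrak{G}$ is then an equivalence, which the paper phrases as being a twisted form of the isomorphism $BG\ra BG$, so $\cG$ is neutral. Your Steps 1--2 spell out what the paper leaves implicit: the descent of $G$ to $\mathfrak{G}$, and the scheme-theoretic equality $N_\Gamma(\mathfrak{G})=\mathfrak{G}$ in characteristic $0$.
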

\begin{proof}
    Let $\cG\ra B\Gamma$ be a $j$-gerbe over $k'$. By \ref{prop:normalizer}, there is a factorization
    $$\cG\ra B\mathfrak{G}\ra B\Gamma,$$
    where $\cG\ra B\mathfrak{G}$ is a twisted form of $BG\ra BG$. The latter is an isomorphism, hence $\cG\ra B\mathfrak{G}$ is already an isomorphism, and thus $\cG$ is neutral.
\end{proof}

\begin{theo}\cite[Theorem 4.18]{neutral-representation-dim-leq3}\label{thm:coh-crit}
    Let $\Pi\subset \Gamma$ be a gate for $G\subset \Gamma(K)$, and assume that $\mathfrak{G}$ is normal in $\Pi$. Set $\mathfrak{Q}=\Pi/\mathfrak{G}$. Then $G$ is neutral if and only if
    $$\H^1(k',\Pi)\ra \H^1(k',\mathfrak{Q})$$
    is surjective for every field extension $k'/k$.
\end{theo}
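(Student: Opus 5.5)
The plan is to reduce the neutrality of $G$ to a statement about the fibres of $B\Pi\ra B\mathfrak{Q}$. The gate property transfers the problem from $\Gamma$ to $\Pi$. The fibres of $B\Pi\ra B\mathfrak{Q}$ over $k'$-points of $B\mathfrak{Q}$ are exactly the $j'$-gerbes into $B\Pi$, so neutrality of those fibres is the same as lifting $\mathfrak{Q}$-torsors to $\Pi$-torsors.

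\emph{Step 1: from $\Gamma$ to $\Pi$.} I claim $G$ is neutral in $\Gamma(K)$ if and only if every $j'$-gerbe $\cG\ra B\Pi$ (for $G\subset \Pi(K)$) over every extension $k'/k$ is neutral.
\begin{itemize}
\item Given a $j$-gerbe $\cG\ra B\Gamma$, Definition~\ref{def:gate} factors it through a $j'$-gerbe $\cG\ra B\Pi$. Hence the neutrality of all $j'$-gerbes implies that of all $j$-gerbes.
\item Conversely, a $j'$-gerbe $\cG\ra B\Pi$ composed with $B\Pi\ra B\Gamma$ is still faithful, because $\Pi\subset\Gamma$ is a monomorphism. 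Its geometric stabilizer image is still conjugate to $G$, so the composite is a $j$-gerbe.
\end{itemize}

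\emph{Step 2: $j'$-gerbes are fibres of $B\Pi\ra B\mathfrak{Q}$.} Let $P$ be a $\mathfrak{Q}$-torsor over $k'$, viewed as a point $\Spec k'\ra B\mathfrak{Q}$. Set
$$\cG_P=\Spec k'\times_{B\mathfrak{Q}}B\Pi,$$
the stack of pairs consisting of a $\Pi$-torsor and an isomorphism of its pushforward with $P$.
\begin{itemize}
\item $\cG_P$ is a gerbe over $k'$. It is locally nonempty because $\Pi\ra\mathfrak{Q}$ is faithfully flat, so $P$ lifts fppf-locally. Any two lifts are locally isomorphic, since they differ by a torsor under a twisted form of $\mathfrak{G}$.
\item The automorphism group of a geometric point of $\cG_P$ is $\ker(\Pi\ra\mathfrak{Q})=\mathfrak{G}$, embedded in $\Pi$. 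Hence the projection $\cG_P\ra B\Pi$ is faithful and is a $j'$-gerbe.
\end{itemize}
Conversely, let $\cG\ra B\Pi$ be a $j'$-gerbe over $k'$, and consider the composite $\cG\ra B\Pi\ra B\mathfrak{Q}$.
\begin{itemize}
\item Since $\mathfrak{G}$ is normal in $\Pi$, every conjugate of $G$ in $\Pi(K)$ equals $G$. The image of each geometric stabilizer is therefore $G$, which dies in $\mathfrak{Q}$.
\item The canonical factorization of Lemma~\ref{canonical-factorization} thus has as middle term a gerbe with trivial inertia, i.e.\ $\Spec k'$. So the composite factors through a point $P\in B\mathfrak{Q}(k')$.
\item This yields a morphism $\cG\ra\cG_P$ over $k'$. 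It induces an isomorphism on geometric automorphism groups, since both map isomorphically onto $G\subset\Pi(K)$.
\item A morphism of gerbes over the same field that is an isomorphism on bands is an equivalence. Hence $\cG\simeq\cG_P$.
\end{itemize}

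\emph{Step 3: conclusion.} By definition, $\cG_P(k')\neq\emptyset$ exactly when $P$ lifts to a $\Pi$-torsor, i.e.\ when $[P]$ lies in the image of $\H^1(k',\Pi)\ra\H^1(k',\mathfrak{Q})$. By Steps 1 and 2, $G$ is neutral if and only if every $\cG_P$ is neutral for every $k'$ and every $P$. This is equivalent to surjectivity of $\H^1(k',\Pi)\ra\H^1(k',\mathfrak{Q})$ for every $k'$.

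The main obstacle is the converse part of Step 2: showing that an arbitrary $j'$-gerbe factors through a rational point of $B\mathfrak{Q}$. This uses normality of $\mathfrak{G}$ in an essential way, and it must be checked carefully that the locally-full/faithful factorization produces exactly $\Spec k'$ and not a nontrivial gerbe. The other needed fact is that a morphism of gerbes inducing isomorphisms on automorphism group schemes is an equivalence.
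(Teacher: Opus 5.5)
Your proof is correct. It shows that every $j'$-gerbe $\cG\ra B\Pi$ is equivalent to a fibre $\Spec k'\times_{B\mathfrak{Q}}B\Pi$: normality of $\mathfrak{G}$ forces the composite to $B\mathfrak{Q}_{k'}$ to kill all inertia, so the canonical factorization lands on a $k'$-point. Neutrality then becomes liftability of the corresponding $\mathfrak{Q}$-torsor, and the gate hypothesis transfers this from $\Pi$ to $\Gamma$. The paper gives no proof of its own and only cites \cite[Theorem 4.18]{neutral-representation-dim-leq3}; your fibre-product argument is the natural proof of that statement and supplies it correctly.
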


\begin{coro}\cite[Corollary~4.20]{neutral-representation-dim-leq3}
\label{coro:special-normalizer}
Let $\mathfrak{N}\subset\Gamma$ be the normalizer of $\mathfrak{G}$. If $\mathfrak{N}/\mathfrak{G}$ is special, then $G$ is neutral.
\end{coro}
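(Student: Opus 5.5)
The claim is Corollary~\ref{coro:special-normalizer}: if $\mathfrak{N}/\mathfrak{G}$ is special, then $G$ is neutral. The plan is to apply Theorem~\ref{thm:coh-crit} with the gate $\Pi=\mathfrak{N}$.

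First, Proposition~\ref{prop:normalizer} says that the normalizer $\mathfrak{N}$ of $\mathfrak{G}$ in $\Gamma$ is a gate for $G=\mathfrak{G}(K)\subset\Gamma(K)$. By definition of the normalizer, $\mathfrak{G}$ is normal in $\mathfrak{N}$. So the hypotheses of Theorem~\ref{thm:coh-crit} hold with $\Pi=\mathfrak{N}$ and $\mathfrak{Q}=\mathfrak{N}/\mathfrak{G}$. The theorem then reduces neutrality of $G$ to showing that
$$\H^1(k',\mathfrak{N})\ra \H^1(k',\mathfrak{Q})$$
is surjective for every field extension $k'/k$.

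Second, we use that $\mathfrak{Q}$ is special. By definition, this means $\H^1(k',\mathfrak{Q})$ is trivial, i.e.\ every $\mathfrak{Q}$-torsor over any field $k'\supseteq k$ is trivial. Hence $\H^1(k',\mathfrak{Q})$ is the single distinguished class. That class is the image of the trivial class in $\H^1(k',\mathfrak{N})$, since the map of pointed sets preserves base points. So the map is surjective for every $k'$, and Theorem~\ref{thm:coh-crit} gives that $G$ is neutral.

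The only point needing care is the formal setup. One must check that the quotient $\mathfrak{N}/\mathfrak{G}$ exists as an affine group scheme of finite type over $k$. This holds because $\mathfrak{G}$ is a finite normal subgroup scheme of the affine group scheme $\mathfrak{N}$. One must also check that "special" is used in the sense that torsors are trivial over every field extension of $k$, which is what Theorem~\ref{thm:coh-crit} requires. Beyond this the argument is immediate.
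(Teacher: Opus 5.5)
Your proof is correct, and it is the standard derivation. The paper only cites this corollary from \cite{neutral-representation-dim-leq3} without proof, but the argument is exactly this: Proposition~\ref{prop:normalizer} makes $\mathfrak{N}$ a gate in which $\mathfrak{G}$ is normal, so Theorem~\ref{thm:coh-crit} reduces neutrality to surjectivity of $\H^1(k',\mathfrak{N})\to\H^1(k',\mathfrak{N}/\mathfrak{G})$, and that map is automatically surjective because the target is a single base point when $\mathfrak{N}/\mathfrak{G}$ is special.
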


\section{The classification of smooth genus six curves}
\label{sect:the-entire-section-classification}
Let $C$ be a smooth projective curve of genus $6$ defined over $K$. The goal of this section is to specify the geometric stratification that will be used in the remainder of the paper.

\subsection{Geometric classification}\label{sect:classification}
For a curve $C$ as above, exactly one of the following holds.
\begin{enumerate}[label=\textnormal{(\roman*)}]
    \item $C$ is hyperelliptic.
    \item $C$ is trigonal. Its canonical model lies on the rational normal scroll determined by the unique $g^1_3$.
    \item $C$ is a smooth plane quintic. Its canonical model is the image of its plane model under the Veronese embedding
    \[
        v_2\colon \bP^2\hookrightarrow \bP^5.
    \]
    \item $C$ is bielliptic. Its canonical model is a quadric section of the cone in $\bP^5$ over an elliptic normal quintic in $\bP^4$.
    \item $C$ does not fall into any of the previous classes. In this case, its canonical model is a quadric section
    \[
        C\in |-2K_{S_C}|
    \]
    of a unique quintic del Pezzo surface $S_C\subset \bP^5$ having at worst rational double points.
\end{enumerate}
In situation \textnormal{(v)}, the surface $S_C$ is either smooth or an ADE quintic del Pezzo surface. In the singular case its type is one of
$$
    A_1,\quad 2A_1,\quad A_2,\quad A_2+A_1,\quad A_3,\quad A_4.
$$

\paragraph{\textbf{References and explanation:}}
The fact that the above list is exhaustive is obtained by combining the special and non-special cases. The special curves of genus six are exactly the smooth plane quintics, hyperelliptic curves, trigonal curves, and bielliptic curves; see \cite[Section~2.3]{Zhao2024Moduli}. If $C$ is not special, then by \cite[Claim~5.14]{ArbarelloHarris1981} its canonical model lies on the unique quintic del Pezzo surface described in \textnormal{(v)}, so there are no additional possibilities. 

The scroll description of a trigonal genus-six curve and its two possible Maroni types are given in \cite[Proposition~2.17]{Zhao2024Moduli}. The uniqueness of the trigonal pencil for genus at least $5$ and of the bielliptic involution for genus at least $6$ follows from the Castelnuovo–Severi inequality; see, for instance, \cite[p.~2]{Schweizer2015}. The description of the bielliptic canonical model as a quadric section of a cone is stated in \cite[Section~7.3]{Zhao2024Moduli}. Finally, the six possible ADE types of singular quintic del Pezzo surfaces arise from the classification in \cite[Theorem~3.4]{HidakaWatanabe1981}, and an explicit treatment in the genus-six setting appears in \cite[Theorem~2.18]{Zhao2024Moduli}.

\subsection{Algebraic structures attached to the curve}
\label{sect:classification-detail}
Now we provide some detail for the classification in section~\ref{sect:classification}, which will be used later. Recall that
\[
    W^1_3(C)
    :=
    \bigl\{[A]\in\Pic^3(C)\mid h^0(C,A)\geq 2\bigr\}
\]
is the Brill--Noether locus of degree $3$ line bundles with at least two independent
sections, endowed with its usual determinantal scheme structure.  A base-point-free point
$[A]\in W^1_3(C)$ with $h^0(C,A)=2$ is a $g^1_3$, and its complete linear series gives
\[
    f_A\colon C\ra \bP\bigl(\H^0(C,A)^\vee\bigr)\simeq \bP^1_K.
\]
For a trigonal curve of genus $6$, this pencil is unique, so the morphism is intrinsic up to
the ambiguity of choosing projective coordinates on its target.

The canonical scroll is isomorphic to $\bF_0$ or $\bF_2$.  If $E$ denotes the negative section and $F$ a fibre, the curve has class
\[
    3E+4F\text{ on }\bF_0,
    \quad
    3E+7F\text{ on }\bF_2.
\]

A curve is bielliptic if it admits a morphism of degree $2$ to a smooth curve of
genus $1$.  Thus there is a finite morphism $\pi: C\ra E$. Because the characteristic is $0$, the quadratic extension $K(C)/K(E)$ is separable and
hence Galois.  Its nontrivial deck transformation is an involution $\tau\in\Aut(C)$, and
$E=C/\langle\tau\rangle$.  Conversely, an involution whose quotient has genus $1$ gives a
bielliptic structure.

For genus $6$ this involution is unique.  Indeed, if $C$ admitted two distinct degree-$2$
maps $\pi_i\colon C\ra E_i$ to genus-one curves, the Castelnuovo--Severi inequality would
give
\[
    g(C)\leq 2g(E_1)+2g(E_2)+(2-1)(2-1)=5,
\]
a contradiction.  See \cite[Chapter~VIII, Exercises~C-1 and~C-2]{ACGH1985} for the
Castelnuovo--Severi argument. Uniqueness also implies centrality: for every $u\in\Aut(C)$, the conjugate $u\tau u^{-1}$ is again a bielliptic involution, hence equals $\tau$. Therefore both the quotient $E$ and the map $\pi$ are intrinsic.

We now make the structure of the bielliptic double cover explicit. Since a non-constant morphism between projective curves is finite, $\pi$ is finite of degree $2$. Moreover, $\pi$ is flat: indeed, $\pi_*\mathcal O_C$ is a torsion-free sheaf of rank
$2$ on the smooth curve $E$, and is therefore locally free.

The deck transformation $\tau$ acts $\mathcal O_E$-linearly on $\pi_*\mathcal O_C$. This action decomposes $\pi_*\mathcal O_C$ into the eigensheaves corresponding to the two characters of $C_2$. The invariant subsheaf is $\mathcal O_E$, since $E$ is the quotient
$C/\langle\tau\rangle$. The anti-invariant subsheaf is therefore a line bundle; writing it as $L^{-1}$, we obtain
\[
\pi_*\mathcal O_C\simeq \mathcal O_E\oplus L^{-1}.
\]
This is the case $G=C_2$ of the character decomposition for abelian covers
in \cite[Equation (1.1)]{Pardini1991}.

The only non-trivial part of the multiplication on $\pi_*\cO_C$ is therefore a morphism
\[
L^{-1}\otimes L^{-1}\to\mathcal O_E,
\]
or, equivalently, a section $s\in \H^0(E,L^{\otimes 2})$. Thus
\[
C\simeq
\operatorname{Spec}_E\bigl(\mathcal O_E\oplus L^{-1}\bigr),
\]
where the multiplication on $L^{-1}\otimes L^{-1}$ is given by $s$.
If $B=(s)_0$ denotes the zero divisor of $s$, then $L^{\otimes 2}\simeq\mathcal O_E(B)$. This is precisely the degree $2$ case of Pardini's building-data
description of abelian covers; see
\cite[Definition~2.1 and Theorem~2.1, p.~196]{Pardini1991}. Conversely, the
pair $(L,s)$ reconstructs the double cover.

Locally on $E$, after trivializing $L$, the cover is given by an equation
\[
z^2=f
\]
for a regular function $f$ representing $s$.  A point belongs to the
branch locus precisely when $f$ vanishes there.  If $f$ had a zero of
order at least $2$, then at the corresponding point of $C$ both
derivatives of $z^2-f$ would vanish, contradicting the smoothness of
$C$.  Hence $B$ is reduced. 

Riemann--Hurwitz now gives
\[
2g(C)-2
 =
2\bigl(2g(E)-2\bigr)+\deg B.
\]
Since $g(C)=6$ and $g(E)=1$, it follows that $\deg B=10$. The relation $L^{\otimes 2}\simeq\mathcal O_E(B)$ therefore implies $\deg L=5$.

The canonical bundle of a double cover defined by $(L,s)$ is
\[
\omega_C\simeq\pi^*(\omega_E\otimes L).
\]
To keep track of the action of
$\tau$ on canonical bundle, it is convenient to use finite
flat duality:
\[
\begin{aligned}
\pi_*\omega_C
&\simeq
\mathcal Hom_E(\pi_*\mathcal O_C,\omega_E)\\
&\simeq
\mathcal Hom_E(\mathcal O_E\oplus L^{-1},\omega_E)\\
&\simeq
\omega_E\oplus(\omega_E\otimes L).
\end{aligned}
\]
The first summand is $\tau$-invariant and the second is
$\tau$-anti-invariant (since $L^{-1}$ is the non-trivial eigensheaf). Since $E$ is elliptic,
$\omega_E\simeq\mathcal O_E$, and hence
\[
\H^0(C,\omega_C)^+
 \simeq \H^0(E,\mathcal O_E),
\quad
\H^0(C,\omega_C)^-
 \simeq \H^0(E,L).
\]
In particular,
\[
\dim \H^0(C,\omega_C)^+=1,
\quad
\dim \H^0(C,\omega_C)^-=5,
\]
where the second equality follows from Riemann--Roch on $E$ and
$\deg L=5$.

A line bundle of degree at least $3$ on an elliptic curve is very ample.
Consequently, the complete linear series $|L|$ embeds $E$ as an elliptic
normal quintic
\[
E\hookrightarrow
\bP\bigl(\H^0(E,L)^\vee\bigr)\simeq\bP^4.
\]
Let
\[
v=
\bP\bigl(\H^0(C,\omega_C)^{+,\vee}\bigr)
\subset
\bP\bigl(\H^0(C,\omega_C)^\vee\bigr)\simeq\bP^5
\]
be the point corresponding to the one-dimensional invariant
eigenspace. Projection of the canonical model of $C$ from $v$ is the
morphism defined by the anti-invariant canonical differentials.
It factors as
\[
C\xrightarrow{\ \pi\ }E
 \xrightarrow{\ |L|\ }\mathbb P^4.
\]
It follows that the canonical image of $C$ is contained in the cone
with vertex $v$ over the elliptic normal quintic $E\subset\mathbb P^4$.

\section{Residue gerbes}
\label{sect:gerbes}

In this section, we investigate the residue gerbe $\cG_C$ of $C$. Our main objective is to construct an algebraic group scheme $\Gamma$ over $k_C$, naturally attached to some additional structure of $C$, in such a way that there is a faithful morphism $\cG_C \to B\Gamma$.

\subsection{del Pezzo case}\label{gerbe-map-del-pezzo-case}
Suppose $C$ lies on a del Pezzo surface. Let $\mathcal M_6$
be the moduli stack of smooth curves of genus $6$.  We write
\[
    \cG_C\subset \mathcal M_6
\]
for the residue gerbe of the geometric point $[C]$; it is an fppf gerbe over $k_C$, and its
geometric inertia group is $\Aut(C)$; see \cite[Proposition 3.10]{BrescianiVistoli2024}.  

Let $\mathcal D_6$ be the moduli stack of pairs
$(S,D)$ in which $S$ is a quintic del Pezzo surface with at worst rational double points
and $D\in |-2K_S|$ is a smooth curve.  For a non-special curve, we write
\[
    \cG_{(S_C,C)}\subset \mathcal D_6
\]
for the residue gerbe of the pair $(S_C,C)$; it is defined over the field of moduli
$k_{(S_C,C)}$ and has geometric inertia group $\Aut(S_C,C)$.  Equivalently, these gerbes
encode the forms of $C$ and of $(S_C,C)$, respectively, together with their isomorphisms.

The uniqueness of the del Pezzo surface in case \textnormal{(v)} of the classification list \ref{sect:classification} identifies the two gerbes just defined. Recall that for $\sigma\in \Gal(K/k_C)$, a $\sigma$-semilinear isomorphism of $C$ is an isomorphims $C^{\sigma}\to C$ over $K$.

\begin{prop}
\label{prop:intrinsic-del-pezzo}
Let $C$ be a non-special smooth curve of genus $6$. Every $\sigma$-semilinear isomorphism $C^\sigma\simeq C$ extends uniquely to an isomorphism of pairs
\[
    (S_C^\sigma,C^\sigma)\simeq (S_C,C).
\]
In particular,
\[
    k_C=k_{(S_C,C)},
    \quad
    \cG_C\simeq \cG_{(S_C,C)},
    \quad
    \Aut(C)=\Aut(S_C,C).
\]
\end{prop}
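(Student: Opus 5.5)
The plan is to realize $S_C$ intrinsically inside the canonical space, so that any isomorphism of curves automatically carries one del Pezzo surface to the other.

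\textbf{Transport of the surface.} Let $\phi\colon C^\sigma\to C$ be a $\sigma$-semilinear isomorphism, i.e.\ an isomorphism of $K$-curves. Pullback of differentials gives a linear isomorphism
\[
\phi^*\colon \H^0(C,\omega_C)\to \H^0(C^\sigma,\omega_{C^\sigma})=\H^0(C,\omega_C)^\sigma .
\]
Hence $\phi$ extends uniquely to a projective linear isomorphism
\[
\bar\phi\colon \bP\bigl(\H^0(C^\sigma,\omega_{C^\sigma})^\vee\bigr)\to \bP\bigl(\H^0(C,\omega_C)^\vee\bigr)
\]
that restricts to $\phi$ on the canonical models. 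Galois conjugation preserves every property in the classification of section~\ref{sect:classification}: $C^\sigma$ is non-special, and $S_C^\sigma$ is a quintic del Pezzo surface with at worst rational double points containing $C^\sigma$ as a member of $|-2K|$. By uniqueness in case (v), $S_C^\sigma$ is the unique such surface containing the canonical model of $C^\sigma$. The image $\bar\phi(S_C^\sigma)$ is then a quintic del Pezzo surface with rational double points containing the canonical $C$, so it equals $S_C$ by uniqueness again. Note that the embedding of $S$ in $\bP^5$ is anticanonical, so $\bar\phi$ respects $-K$, and $C\in|-2K_{S_C}|$ is automatic. Therefore $\bar\phi|_{S_C^\sigma}$ gives an isomorphism of pairs $(S_C^\sigma,C^\sigma)\simeq(S_C,C)$ extending $\phi$.

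\textbf{Uniqueness of the extension.} This reduces to the case $\sigma=\id$: an automorphism of $S_C$ restricting to the identity on $C$ is trivial. Since $S_C\subset\bP^5$ is anticanonically embedded, every automorphism of $S_C$ preserves $-K_{S_C}$ and hence is induced by a linear automorphism of $\bP^5$. The restriction map
\[
\H^0(S_C,-K_{S_C})\to \H^0(C,\omega_C)
\]
is an isomorphism, because the canonical model of $C$ spans the same $\bP^5$. So a linear map fixing $C$ pointwise acts as a scalar, i.e.\ trivially, on $\bP^5$. This is the point needing the most care: one must check that automorphisms of the possibly singular $S_C$ are linear. This holds because $-K_{S_C}$ is an intrinsic Cartier divisor on a Gorenstein surface and the embedding is given by its complete linear system.

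\textbf{Consequences.} Existence and uniqueness show that the forgetful map $\mathcal D_6\to\mathcal M_6$ induces bijections on all $\sigma$-semilinear isomorphism sets. Taking $\sigma$ arbitrary gives $k_C=k_{(S_C,C)}$, and taking $\sigma=\id$ gives $\Aut(S_C,C)=\Aut(C)$. The induced morphism of residue gerbes $\cG_{(S_C,C)}\to\cG_C$ is then fully faithful on geometric fibres and essentially surjective. This argument works over any field extension, since the construction is functorial and commutes with base change. Hence it is an equivalence of gerbes over $k_C$, i.e.\ $\cG_C\simeq\cG_{(S_C,C)}$.
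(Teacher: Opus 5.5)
Your proposal is correct and follows essentially the same argument as the paper. You transport $S_C^\sigma$ into the canonical space of $C$ via the projectivity induced by $\phi^*$, invoke uniqueness of the quintic del Pezzo surface containing the canonical curve, and obtain uniqueness of the extension because automorphisms of the anticanonically embedded $S_C$ are linear and a projectivity fixing the spanning irreducible canonical curve pointwise is trivial; your explicit justification of that linearity (Gorenstein surface, complete anticanonical system) is a detail the paper only asserts.
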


\begin{proof}
Note that there is a natural forgetful morphism $\cG_{(S_C,C)} \to \cG_C$, and by definition of field of moduli, we have $k_C\subset k_{(S_C,C)}$. So it is enough to show $k_{(S_C,C)}\subset k_C$ and $\Aut(C)=\Aut(S_C,C)$, since the isomorphism of residue gerbes can be checked after base change to $K$.

Let $\varphi: C^{\sigma} \xrightarrow{\sim} C$ be an isomorphism over $K$. Pullback of the canonical divisor induces an
isomorphism
\[
\varphi^{*}: \H^{0}(C,\omega_{C})
\xrightarrow{\sim}
\H^{0}(C^{\sigma},\omega_{C^{\sigma}}).
\]
Hence, the diagram
\begin{center}
    \begin{tikzcd}
C^{\sigma} \arrow[r, "\varphi"] \arrow[d] & C \arrow[d] \\
\bP^5 \arrow[r, "\Phi_{\varphi}"]         & \bP^{5}    
\end{tikzcd}
\end{center}
commutes, where the two vertical maps are canonical embeddings.

In particular, $\Phi_{\varphi}(S_C^{\sigma})$ is a quintic del Pezzo surface
containing the image of $C$ under the canonical embedding. By the uniqueness of the quintic
del Pezzo surface stated in \ref{sect:classification}, one has
\[
\Phi_{\varphi}(S_C^{\sigma})=S_C.
\]
Therefore the restriction
\[
\tilde{\varphi}
:=
\Phi_{\varphi}|_{S_C^{\sigma}}
\colon
S_C^{\sigma}
\xrightarrow{\sim}
S_C
\]
is an isomorphism extending $\varphi$. Equivalently, the diagram
\begin{center}
    \begin{tikzcd}
C^{\sigma} \arrow[r, "\varphi"] \arrow[d] & C \arrow[d] \\
S_C^{\sigma} \arrow[r, "\tilde{\varphi}"] & S_C        
\end{tikzcd}
\end{center}
commutes.

The extension is unique. Indeed, an automorphism of $S_C^{\sigma}$ whose restriction to $C^{\sigma}$ is the identity induces a projective transformation of the anticanonical space that fixes the canonical curve $C^\sigma$ pointwise. Since the canonical curve spans $\bP^5_K$, this projective transformation is the identity, and hence so is the automorphism of $S_C^\sigma$.

It follows that $\Aut(C)\cong \Aut(S_C,C)$ and $k_C=k_{(S_C,C)}$. 
\end{proof}

\begin{prop}
\label{prop:del-pezzo-marking-morphism}
Suppose that $S_C$ is smooth. Then there is a faithful morphism
$$
    \rho:\cG_C\to BS_5,
$$
where $S_5$ denotes the constant group scheme over $k_C$.
\end{prop}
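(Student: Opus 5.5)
By Proposition~\ref{prop:intrinsic-del-pezzo} we have $\cG_C\simeq\cG_{(S_C,C)}$ with geometric inertia $\Aut(S_C,C)$. It therefore suffices to construct a morphism $\cG_{(S_C,C)}\to BS_5$ that is injective on automorphism groups of geometric points. The plan is to pass through the stack of smooth quintic del Pezzo surfaces and use the action of automorphisms on the ten $(-1)$-curves, or equivalently on the five conic bundle structures. Recall that a smooth quintic del Pezzo surface $S$ over $K$ has exactly five pencils of conics; these give five morphisms $S\to\bP^1$, or five classes in $\Pic(S)$ of self-intersection $0$ and anticanonical degree $2$. The restriction map $\Aut(S)\to \mathrm{Sym}\{\text{conic classes}\}\cong S_5$ is an isomorphism: this is the classical identification $\Aut(S)\cong S_5\cong W(A_4)$.

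The steps are as follows.

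\textbf{Step 1.} I would define the finite étale cover $\cP\to\cG_C$ whose fibre over an object $(S,D)$ over a $k_C$-scheme $T$ is the set of conic bundle classes of $S$, that is, the étale sheaf over $T$ of sections of the relative Picard scheme of $S/T$ of the appropriate numerical type. Since $S\to T$ is a smooth family of quintic del Pezzo surfaces, $\underline{\Pic}_{S/T}$ is étale locally constant $\cong\bZ^5$. The subscheme of classes $c$ with $c^2=0$ and $-K_S\cdot c=2$ is then a finite étale $T$-scheme of degree $5$, and its formation is compatible with base change.

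\textbf{Step 2.} A finite étale cover of degree $5$ over a stack is the same as a morphism to $BS_5$: send $(S,D)$ to the $S_5$-torsor of orderings of this degree-$5$ étale scheme. Pullbacks along morphisms in $\cG_C$ are functorial, so this defines $\rho:\cG_C\to BS_5$. Here $S_5$ is constant because we only use the Galois/étale structure of the five-element set.

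\textbf{Step 3.} To check faithfulness, it suffices to check it at a geometric point $(S_C,C)$ over $K$. There the induced map is
\[
\Aut(C)=\Aut(S_C,C)\hookrightarrow \Aut(S_C)\to S_5,
\]
where the second arrow records the permutation of the five conic classes. This map is injective: an automorphism of $S_C$ fixing all five conic classes fixes $\Pic(S_C)$, because the conic classes together with $K_S$ generate $\Pic(S_C)\otimes\mathbb{Q}$, and in fact the $(-1)$-curves are recovered as $c_i+c_j+K_S$ type combinations. Hence it fixes all ten lines, and therefore the four disjoint lines contracted by a blow-down $S_C\to\bP^2$. It then descends to a projective automorphism of $\bP^2$ fixing four points in general position, which must be the identity. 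Alternatively, one can quote the isomorphism $\Aut(S)\cong S_5$.

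The main obstacle is Step 1: making the degree-$5$ étale scheme rigorously functorial over the gerbe, that is, over arbitrary $k_C$-schemes rather than just fields. I would handle this using representability of $\underline{\Pic}_{S/T}$ for smooth projective families with geometrically rational fibres; it is étale and locally constant since $\H^1(\cO)=\H^2(\cO)=0$. Alternatively, one can work with the Hilbert scheme of lines in the anticanonical embedding, which is finite étale of degree $10$ over $T$, and take the degree-$5$ quotient by the Petersen-graph structure. If this becomes too technical, a simpler route is to note that the fppf gerbe is determined by Galois descent data. In that case it suffices to give a $\Gal(K/k_C)$-equivariant action of semilinear isomorphisms on the five-element set, via Proposition~\ref{prop:intrinsic-del-pezzo}, which transports conic classes through $\tilde\varphi$.
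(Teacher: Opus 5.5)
Your proof is correct, but it builds the $S_5$-torsor differently from the paper.

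\textbf{The paper's route.} The paper fixes the split model $S_0=\operatorname{Bl}_{p_1,\dots,p_4}\bP^2_{k_C}$. Its automorphism group scheme is the constant group $S_5$, because all ten $(-1)$-curves are rational. The paper then sends $(S,D)$ to the torsor $\underline{\mathrm{Isom}}_T(S_{0,T},S)$. Faithfulness is purely formal: $g\circ\alpha=\alpha$ with $\alpha$ an isomorphism forces $g=\id$.

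\textbf{Your route.} You send $(S,D)$ to the torsor of orderings of the degree-$5$ étale scheme of conic classes in $\underline{\Pic}_{S/T}$. Your numerical description is right: by Cauchy--Schwarz, the conditions $c^2=0$ and $-K_S\cdot c=2$ leave exactly $H-E_i$ and $2H-\sum E_i$. Finiteness of that open and closed subscheme, which you flagged as the main obstacle, follows because it is separated and étale with constant geometric fibre cardinality $5$. This construction is intrinsic and needs no choice of model. The price is that faithfulness is no longer automatic: you must show that $\Aut(S_C)$ acts faithfully on the conic classes. Your Picard-lattice argument does this correctly, with one sign fix: the lines are $-K_S-c_i-c_j$.

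\textbf{Comparison.} The two torsors are canonically isomorphic once $\Aut(S_0)$ is identified with the permutations of the five rational conic classes of $S_0$. So the classical fact $\Aut(S)\cong S_5$ enters both proofs. In the paper it appears as the identification $\underline{\Aut}(S_0)\simeq S_5$; in yours it appears as injectivity of the action on conic classes. The paper's formulation has one advantage: it carries over verbatim to the singular case (Proposition~\ref{prop:singular-del-pezzo-marking-morphism}). There $\Aut(S_\tau)$ is positive-dimensional, and a marking by discrete Picard data need not be faithful on elements of $\Aut(C)$ in the identity component.
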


\begin{proof}
Set $k_0=k_{(S_C,C)}$. We first choose a split model of the quintic del Pezzo surface. Let
\[
S_0
:=
\operatorname{Bl}_{p_1,p_2,p_3,p_4}\bP^2_{k_0},
\]
where
\[
p_1=(1:0:0),\qquad
p_2=(0:1:0),\qquad
p_3=(0:0:1),\qquad
p_4=(1:1:1).
\]
These four points are in general position, so $S_0$ is a smooth
quintic del Pezzo surface.

We recall the structure of the automorphism group of $S_0$. The surface $S_0$ contains ten $(-1)$-curves: the four exceptional divisors above the points $p_i$, together with the strict transforms of the six lines joining $p_i$ and $p_j$. These curves form the Petersen graph. Because all the points $p_i$ are $k_0$-rational, the Galois group $\Gal(K/k_0)$ acts trivially on this Petersen graph. By \cite[Proposition 3.8]{Boi26}, it follows that
\[
\Aut_{k_0}(S_0)\simeq S_5.
\]

Over an algebraically closed field, any smooth del Pezzo surface of degree $5$ is isomorphic to the blow-up of $\bP^2$ at four points in general position, and such a surface is unique up to isomorphism. In particular, we have $S_{0,K}\simeq S_C$.

Let now $T\in (\Aff/k)$ and consider an object
\[
\xi=(S,D)\in\cG_{(S_C,C)}(T).
\]
Since $\cG_{(S_C,C)}$ is the residue gerbe of $(S_C,C)$ and $S_{0,K}\simeq S_C$, the surface $S$ is fppf-locally on $T$ isomorphic to $S_{0,T}$. Consider the fppf sheaf
\[
\cP_\xi
:=
\underline{\mathrm{Isom}}_T(S_{0,T},S),
\]
the constant group scheme
$S_5=\Aut_T(S_{0,T})$ acts on the right on $\cP_\xi$ by precomposition. It is straightforward to check that $\cP_\xi$ is an $S_5$-torsor over $T$ in the fppf topology. Thus the assignment $\xi\mapsto \cP_\xi$ defines a morphism of stacks
\[
\rho:\cG_{(S_C,C)}\to BS_5.
\]

We now show that $\rho$ is faithful. Let $g\in\Aut_T(S,D)$ be an automorphism whose image under $\rho$ is the identity automorphism of the torsor $\cP_\xi$. After passing to an fppf cover $T'\to T$, choose a section
\[
\alpha\in
\underline{\mathrm{Isom}}_{T'}(S_{0,T'},S_{T'}).
\]
The automorphism of $\cP_{\xi,T'}$ determined by $g$ maps $\alpha$ to $g_{T'}\circ\alpha$. Since this automorphism is trivial, we have $g_{T'}\circ\alpha=\alpha$. As $\alpha$ is an isomorphism, it follows that $g_{T'}=\id_{S_{T'}}$.

Because equality of morphisms can be checked after a faithfully flat base change, we conclude that $g=\id_S$. Hence, for every object $\xi$, the natural homomorphism
\[
\Aut_T(\xi)\to
\Aut_T\bigl(\rho(\xi)\bigr)
\]
is injective. This shows that $\rho$ is faithful.
\end{proof}

\begin{rema}
    The preceding proposition essentially shows the existence of a morphism from $\cG_C$ to the residue gerbe $\cH$ of smooth quintic del Pezzo surfaces. $\cH$ is neutral because there is a split model over $k_0$, namely $\operatorname{Bl}_{p_1,p_2,p_3,p_4}\bP^2_{k_0}$. Consequently, we obtain an identification $\cH\simeq BS_5$. If, instead, we take a non-split surface as the tautological section of $\cH$, then we obtain an equivalence $\cH\simeq B\mathfrak{H}$, where $\mathfrak{H}$ is a twisted form of the constant group scheme $S_5$.
\end{rema}

\begin{prop}
\label{prop:singular-del-pezzo-marking-morphism}
Suppose that $S_C$ is singular of ADE type $\tau$. Choose a split quintic del Pezzo surface $S_\tau$ over $k_C$ of type $\tau$, and set
$$
    \Gamma_\tau:=\Aut_{k_C}(S_\tau).
$$
Then there is a faithful morphism
$$
    \rho_\tau:\cG_C\to B\Gamma_\tau.
$$
\end{prop}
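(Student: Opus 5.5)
The plan is to mimic the proof of Proposition~\ref{prop:del-pezzo-marking-morphism}, replacing the split smooth surface $S_0$ by the split model $S_\tau$. By Proposition~\ref{prop:intrinsic-del-pezzo} we have $\cG_C\simeq\cG_{(S_C,C)}$ over $k_C=k_{(S_C,C)}$, so it suffices to build a faithful morphism $\cG_{(S_C,C)}\to B\Gamma_\tau$.

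First we need the geometric input that quintic del Pezzo surfaces of a fixed ADE type $\tau$ from the list in \S\ref{sect:classification} are unique up to isomorphism over $K$. For the singular types this follows from the classification of Hidaka--Watanabe: each such surface is obtained from $\bP^2$ by blowing up four points in almost general position (possibly infinitely near) and contracting the $(-2)$-curves, and for each $\tau$ the configuration is unique up to $\PGL_3$. Choosing all these points (and tangent directions) $k_C$-rational, for instance on the coordinate lines, gives a split model $S_\tau$ over $k_C$ with $S_{\tau,K}\simeq S_C$. We also need $\Gamma_\tau=\underline{\Aut}(S_\tau)$ to be an affine group scheme of finite type over $k_C$. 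This holds because $S_\tau$ is anticanonically embedded in $\bP^5$ and $-K$ is preserved by every automorphism, so $\Gamma_\tau$ is the closed stabilizer of $S_\tau$ inside $\PGL_6$. In contrast with the smooth case, $\Gamma_\tau$ is in general positive-dimensional, for example containing tori or unipotent parts.

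Given $\xi=(S,D)\in\cG_{(S_C,C)}(T)$, the surface $S$ is fppf-locally on $T$ isomorphic to $S_{\tau,T}$. This holds because $\xi$ is fppf-locally isomorphic to the base change of $(S_C,C)$, and $S_C\simeq S_{\tau,K}$. So we set $\cP_\xi:=\underline{\mathrm{Isom}}_T(S_{\tau,T},S)$, with $\Gamma_\tau$ acting on the right by precomposition. The scheme $\cP_\xi$ is representable: it is a closed subscheme of an Isom scheme of polarized projective schemes, using the anticanonical polarization. It is fppf-locally trivial, hence a $\Gamma_\tau$-torsor. The assignment is functorial in $\xi$ and compatible with base change, so it defines $\rho_\tau:\cG_{(S_C,C)}\to B\Gamma_\tau$.

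Faithfulness is checked exactly as before. If $g\in\Aut_T(S,D)$ acts trivially on $\cP_\xi$, then after an fppf cover we pick $\alpha\in\cP_\xi(T')$, and $g\circ\alpha=\alpha$ forces $g_{T'}=\id$, hence $g=\id$ by fpqc descent. The main obstacle is the first step: rigorously justifying, for each type $\tau$, that the local triviality holds with the group scheme $\Gamma_\tau$ rather than merely on $K$-points. This includes checking that the Isom functor is representable when $\Gamma_\tau$ is non-reduced or non-finite. I expect this to work because in characteristic $0$ the group $\Gamma_\tau$ is smooth, and an fppf torsor under it is then étale-locally trivial.
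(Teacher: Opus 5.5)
Your proposal is correct and follows the paper's proof: you identify $\cG_C$ with $\cG_{(S_C,C)}$ via Proposition~\ref{prop:intrinsic-del-pezzo}, take the $\Gamma_\tau$-torsor $\underline{\mathrm{Isom}}_T(S_{\tau,T},S)$, and prove faithfulness exactly as in the smooth case. The paper leaves implicit the extra points you spell out (that $S_{\tau,K}\simeq S_C$, for which it relies on the Cheltsov--Prokhorov split models, and that $\Gamma_\tau$ is a closed subgroup of $\PGL_6$); one small correction is that for each $\tau$ it is the surface that is unique, not the blown-up point configuration (type $A_1$ arises both from three collinear points and from an infinitely near pair).
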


\begin{proof}
Split models for the six possible ADE types are listed in \cite[Table~3, cases $36^\circ$--$41^\circ$ and Appendix A for case $38^{\circ}$]{CheltsovProkhorov2021}. By Proposition~\ref{prop:intrinsic-del-pezzo}, we may again identify $\cG_C$ with $\cG_{(S_C,C)}$. For $\xi=(S,D)\in\cG_{(S_C,C)}(T)$, the fppf sheaf
$$
    \underline{\mathrm{Isom}}_T(S_{\tau,T},S)
$$
is a $\Gamma_\tau$-torsor. As in the proof of Proposition~\ref{prop:del-pezzo-marking-morphism}, this construction is functorial and faithful, and therefore gives the required morphism.
\end{proof}

\subsection{Bielliptic case}\label{sect:bielliptic-gerbe-construction} Suppose now $C$ is bielliptic, Let $p:\cC\ra \cG_C$ be the universal curve over $\cG_C$. Concretely, if $S$ is an affine $k$-scheme and $\xi:S\to\cG_C$ is an object of the gerbe, then the base change
\[
  p_\xi:\cC_\xi:=\cC\times_{\cG_C,\xi}S\to S
\]
is the smooth proper genus $6$ family represented by $\xi$. 

The \emph{Hodge bundle} on $\cG_C$ is $\bE:=p_*\omega_{\cC/\cG_C}$. For every $\xi:S\to\cG$, cohomology and base change give a canonical
identification
\[
  \xi^*\bE\simeq
  (p_\xi)_*\omega_{\cC_\xi/S}.
\]
The relative dualizing sheaf is invertible and compatible with arbitrary base change, and relative duality identifies
\[
  (p_\xi)_*\omega_{\cC_\xi/S}
  \simeq
  (R^1(p_\xi)_*\cO_{\cC_\xi})^{\vee}.
\]
Consequently, $\bE$ is locally free of rank $6$, its formation commutes with base
change, and its fibre at the geometric point of $\cG_C$ corresponding to $C$ is $\bE|_C\simeq \H^0(C,\omega_C)$. For the relative dualizing sheaf and its base-change property, see \cite[\href{https://stacks.math.columbia.edu/tag/0E6N}{Tag 0E6N}]{stacks-project}; for local freeness and base change of
$R^1p_*\cO$ in a family of curves, see \cite[\href{https://stacks.math.columbia.edu/tag/0GKA}{Tag 0GKA}]{stacks-project}.

The bielliptic involution $\tau$ is unique and central in $G=\Aut(C)$ as shown in section~\ref{sect:classification-detail}, so it is preserved by every arrow in the residue gerbe. Equivalently, the automorphism $\tau$ of the constant family glues to an automorphism of the universal curve $\cC/\cG_C$. It therefore acts on $\bE$. We can then split the Hodge bundle canonically
\[
  \bE=\bE_+\oplus\bE_-,
\]
just as in the case of the curve $C$.

The displayed summands are respectively the $(+1)$- and $(-1)$-eigensheaves, meaning that the involution $\tau$ acts as $(+1)$ and $(-1)$, respectively.
Therefore
\[
  \rk(\bE_+)=1,\quad \rk(\bE_-)=5,
\]
and their geometric fibers are
\[
  (\bE_+)_C=\H^0(E,\omega_E),
  \quad
  (\bE_-)_C=\H^0(E,\omega_E\otimes L).
\]

\begin{prop}\label{prop:faithful-hodge}
The two Hodge eigen sub-bundles determine a canonical faithful morphism
\[
  \Phi:\cG_C\to
  B(\bG_m\times\GL_5).
\]
On inertia group $G=\Aut(C)$ of the geometric point corresponding to $C$, the induced homomorphism is
\[
  \chi_+\times\rho_-:
  G\to
  \bG_m(K)\times
  \GL(\H^0(E,\omega_E\otimes L)),
\]
where $\chi_+$ is the action on $\H^0(E,\omega_E)$ and $\rho_-$ is the action on the
anti-invariant Hodge space.
\end{prop}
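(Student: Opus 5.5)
The plan is to build $\Phi$ by sending each object of $\cG_C$ to the pair of frame torsors of the two eigen-bundles, then to read off the homomorphism on inertia directly, and finally to deduce faithfulness from the fact that $C$ is canonically embedded.

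First, the construction. For an object $\xi:S\to\cG_C$, set
\[
\cP_\xi:=\underline{\mathrm{Isom}}_S(\cO_S,\xi^*\bE_+)\times_S\underline{\mathrm{Isom}}_S(\cO_S^{5},\xi^*\bE_-).
\]
Because $\bE_\pm$ are locally free of ranks $1$ and $5$, and their formation commutes with base change, $\cP_\xi$ is a $\bG_m\times\GL_5$-torsor. Functoriality is automatic: an arrow $\xi\to\xi'$ in $\cG_C$ is an isomorphism of families of curves. Such an arrow commutes with the glued involution $\tau$, since $\tau$ is central and unique. It therefore induces isomorphisms $\xi^*\bE_\pm\simeq\xi'^*\bE_\pm$ compatible with pullback. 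This gives a morphism of stacks $\cG_C\to B\bG_m\times B\GL_5=B(\bG_m\times\GL_5)$.

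Second, the inertia description. Take the geometric point corresponding to $C$. An automorphism $u\in G=\Aut(C)$ acts on $\H^0(C,\omega_C)$ by pullback (or its inverse, to obtain a left action). This action commutes with $\tau$, so it preserves the $\pm$ eigenspaces. The induced automorphism of the torsor is exactly the pair $(\chi_+(u),\rho_-(u))$. Here the fibres are identified with $\H^0(E,\omega_E)$ and $\H^0(E,\omega_E\otimes L)$ via the duality computation of \S\ref{sect:classification-detail}.

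Third, faithfulness, which is the main point. It suffices to check it on the geometric point, i.e. to show that $\chi_+\times\rho_-$ is injective on $G$. Suppose $u$ acts trivially on both summands. Then $u$ acts trivially on $\H^0(C,\omega_C)=\H^0_+\oplus\H^0_-$.

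Since $C$ is not hyperelliptic (it is bielliptic of genus $6$, and hence excluded from case (i) by the classification), the canonical map is an embedding $C\hookrightarrow\bP^5$. It is equivariant for the projectivized action of $G$ on $\bP(\H^0(C,\omega_C)^\vee)$. A trivial linear action therefore fixes the canonical curve pointwise, so $u=\id$. This is essentially Hurwitz's faithfulness of $\Aut(C)$ on differentials for $g\ge2$, and I would cite it in that form.

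The only care needed is in the first step. There one must check that $\tau$ really glues to an automorphism of the universal curve, so that $\bE_\pm$ are defined over $\cG_C$ and not merely fppf-locally. The paper has already recorded this: uniqueness makes $\tau$ compatible with every arrow of the gerbe. Monomorphism of the group schemes then follows from injectivity on $K$-points, because the automorphism group scheme of an object of $\cG_C$ is finite étale in characteristic $0$.
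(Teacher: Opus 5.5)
Your proposal is correct and follows the paper's proof. The morphism comes from the two eigen-bundles $\bE_+$ and $\bE_-$, viewed as $\bG_m$- and $\GL_5$-torsors, and faithfulness is reduced to injectivity of $\chi_+\times\rho_-$ on $G$, that is, to faithfulness of $\Aut(C)$ on $\H^0(C,\omega_C)$. The only cosmetic difference is that you prove this last fact via the canonical embedding, which is legitimate because a bielliptic genus $6$ curve is not hyperelliptic, whereas the paper cites K\"ock's general result for genus $\geq 2$.
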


\begin{proof}
A rank $r$ vector bundle $\cV$ on a stack determines a morphism to
$B\GL_r$. Applying this construction to the line bundle $\bE_+$ and the rank $5$ bundle
$\bE_-$ gives the morphism. After evaluating at the geometric object $C$, its inertia
homomorphism is the product representation $\chi_+\times\rho_-$ displayed above.

It remains to prove faithfulness, it is enough to prove $\chi_+\times\rho_-$ is injective. Suppose $u\in G$ lies in the kernel. Then $u$ acts
trivially on both $\bE_+|_C$ and $\bE_-|_C$, hence trivially on their direct sum
\[
  \bE|_C=\H^0(C,\omega_C).
\]
But it is known that the canonical representation of the automorphism group of a
smooth curve of genus at least $2$ on $\H^0(C,\omega_C)$ is faithful; see
\cite[Theorem 3.2]{K_ck_2015}.
\end{proof}

\subsection{Hyperelliptic and trigonal} The remaining situations are simpler; we begin by assuming that $C$ is hyperelliptic or trigonal. In this setting, $C$ possesses a canonical $g_2^1$ or a canonical $g_3^1$, which gives a canonical map $f:C\to \bP^1_K$, up to automorphism of $\bP^1_K$. Fix such an map $f$, for any $\sigma$-semilinear isomorphism $\varphi: C^{\sigma}\to C$, $f \circ\varphi$ defines a morphism $C^{\sigma}\to \bP^1_K$. By the uniqueness of the pencil, there exists a unique isomorphism $\bP_K^{1,\sigma}\to \bP_K^1$ making the following diagram commute.
\begin{center}
   \begin{tikzcd}
C^{\sigma} \arrow[r] \arrow[d] & C \arrow[d] \\
{\bP^{1,\sigma}_K} \arrow[r]   & \bP^1_K    
\end{tikzcd} 
\end{center}

Then, we may consider the residue gerbe of the pair $(C,f)$, and a similar argument as in the proof of \ref{prop:intrinsic-del-pezzo} and \ref{prop:del-pezzo-marking-morphism} would produce a morphism $\cG_C\to B\PGL_2$.

\subsection{Plane quintic} Finally, assume that $C$ is a plane quintic, so there is an embedding $v:C\hookrightarrow \bP^2_K$ and we regard $C$ as a curve in $\bP^2_K$. The associated line bundle $\cO_C(1)$ is intrinsically determined by $C$, in the sense that any two embeddings of $C$ into $\bP^2_K$ differ by an automorphism of $\bP^2_K$; see \cite[Appendix A, \S 1, Exercise 18]{ACGH1985}.

Consequently, by the same reasoning as in the proof of Proposition~\ref{prop:intrinsic-del-pezzo}, the field of moduli of $C$ coincides with the field of moduli of the pair $(\bP^2_K,C)$, and their residue gerbes agree. The isomorphism torsor construction used in Proposition~\ref{prop:del-pezzo-marking-morphism} then gives a morphism $\cG_C\ra B\PGL_3$; see \cite[Section 5]{plane-curve}.

\section{\texorpdfstring{Neutral subgroups of $S_5$}{Neutral subgroups of S5}}
\label{sect:neutral-S_5}
In this section, we take a brief detour from the main thread of the paper to classify the neutral subgroups of $S_5$, the result will be applied to the proof of the main theorem. 

First, we enumerate all subgroups of $S_5$ up to conjugacy:
\[
\begin{aligned}
C_2^t&=\langle(12)\rangle,
& C_2^d&=\langle(12)(34)\rangle,\\
V_4^{\mathrm{tr}}&=\langle(12),(34)\rangle,
& V_4^K&=\{1,(12)(34),(13)(24),(14)(23)\},\\
S_3^{\mathrm{fix}}&=\langle(123),(12)\rangle,
& S_3^{\mathrm{tw}}&=\langle(123),(12)(45)\rangle.
\end{aligned}
\]
Also, define
\[
    F_{20}=C_5\rtimes C_4
    =\langle r,a\mid r^5=a^4=1,\ ara^{-1}=r^2\rangle,
\]
and we adopt the convention that $D_{2n}$ denotes the dihedral group of order $2n$. The superscripts on the subgroup names are mnemonic for their embeddings: “t” and “d” refer to subgroups generated by a single transposition and by a double transposition, respectively; “tr” indicates the Klein four group generated by transpositions; “K” refers to the normal Klein four subgroup of $S_4$; and “fix” and “tw” distinguish the embedding of $S_3$ that fixes a point from the sign-twisted embedding, respectively.

Now we can list all finite subgroups up to conjugation and their normalizers. 
\begin{prop}
\label{prop:s5-normalizer-table}
The group $S_5$ has nineteen conjugacy classes of subgroups. Table~\ref{tab:s5-normalizers} lists representatives for these classes together with their normalizers. A complement to $G$ in $N$ is a subgroup $H\subset N$ such that $H\cap G=\{1\}$ and $H\cong N/G$. So a dash in the final column indicates that the corresponding normalizer extension does not split.
\end{prop}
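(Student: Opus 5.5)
The statement is a finite group-theoretic computation, so I will make it a systematic enumeration organized by order, followed by normalizer computations and a split/non-split check.

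\textbf{Step 1: the nineteen classes.} I will list the conjugacy classes by order, using Sylow theory and the orbit structure on $\{1,\dots,5\}$. The classes are:
\begin{itemize}
\item order $1$: the trivial group;
\item order $2$: $C_2^t$ and $C_2^d$;
\item order $3$: $C_3$;
\item order $4$: $C_4$, $V_4^{\mathrm{tr}}$ and $V_4^K$;
\item order $5$: $C_5$;
\item order $6$: $C_6=\langle(123)(45)\rangle$, $S_3^{\mathrm{fix}}$ and $S_3^{\mathrm{tw}}$;
\item order $8$: $D_8$;
\item order $10$: $D_{10}$;
\item order $12$: $A_4$ and $S_3\times S_2=\langle(123),(12),(45)\rangle$;
\item order $20$: $F_{20}$;
\item order $24$: $S_4$;
\item order $60$: $A_5$;
\item order $120$: $S_5$.
\end{itemize}
That is nineteen. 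To see the list is complete, I will use the facts that $S_5$ has no subgroups of order $15$, $30$ or $40$, and that subgroups of order $12$ or $24$ fix a point or lie in a point stabilizer (by counting Sylow $5$-subgroups and the index-$5$ argument). Subgroups of order $4$ and $6$ are classified by the cycle types of their elements, and by whether they fix points or act through $\{4,5\}$.

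\textbf{Step 2: normalizers.} For each representative $G$, I will compute $N=N_{S_5}(G)$, mostly via $|N|=120/\#\{\text{conjugates}\}$ together with an explicit visible subgroup of that order. Representative computations:
\begin{itemize}
\item $N(C_2^t)=S_2\times S_3$, of order $12$;
\item $N(C_2^d)=D_8$, since there are $15$ double transpositions;
\item $N(C_3)=S_3\times S_2$, since there are $10$ subgroups of order $3$;
\item $N(C_4)=D_8$;
\item $N(V_4^{\mathrm{tr}})=D_8$;
\item $N(V_4^K)=S_4$;
\item $N(C_5)=F_{20}$ and $N(D_{10})=F_{20}$;
\item $N(C_6)=N(S_3^{\mathrm{fix}})=N(S_3^{\mathrm{tw}})=S_3\times S_2$.
\end{itemize}
The groups $D_8$, $A_4$, $S_3\times S_2$, $F_{20}$ and $S_4$ are self-normalizing, being maximal or nearly so, with the index check done by counting conjugates. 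Finally, $A_5$ and $S_5$ are normal.

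\textbf{Step 3: splitting.} For each pair I will decide whether $N\to N/G$ admits a complement, and exhibit one explicitly when it does. For example:
\begin{itemize}
\item $\langle(45)\rangle$ complements $C_3$ inside $S_3\times S_2$ with quotient $V_4$, after pairing it with $(12)$;
\item $S_3$ complements $V_4^K$ in $S_4$;
\item $C_4$ complements $C_5$ and $D_{10}$ in $F_{20}$, the latter only up to the quotient $C_2$.
\end{itemize}
I expect the non-split cases to be exactly those where every preimage of a quotient generator of order $2$ has order $4$. The key instance is $C_2^d\subset D_8$ with quotient $V_4$. Here I must check whether some Klein subgroup of $D_8$ meets the center trivially. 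I believe one does, namely $\langle(12),(34)\rangle$, so this extension splits. The $D_{10}\subset F_{20}$ case is then the one to watch: elements of $F_{20}$ outside $D_{10}$ have order $4$, and their squares lie in $D_{10}$. Hence that extension does not split, which foreshadows the $\sqrt{-1}$ condition in the main theorem.

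\textbf{Main obstacle.} The main obstacle is bookkeeping rather than ideas: each non-split claim requires ruling out complements by examining element orders in the coset, and each split claim requires producing one.
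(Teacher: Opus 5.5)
Your plan (enumerate the nineteen classes, compute normalizers by counting conjugates, then search for complements) is the same case-by-case computation the paper invokes via Swallow's classification. However, two of your concrete conclusions are false, and each contradicts a row of Table~\ref{tab:s5-normalizers}.

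The serious error is the $C_2^d$ row. The group $\langle(12),(34)\rangle=\{1,(12),(34),(12)(34)\}$ contains the generator of $C_2^d$, so it is not a complement. In fact $C_2^d$ is the center of $N=D_8=\langle(1324),(12)\rangle$. Both Klein four subgroups of $D_8$, namely $\langle(12),(34)\rangle$ and $V_4^K$, contain it. Equivalently, the nontrivial cosets of $C_2^d$ in $N$ are $\{(12),(34)\}$, $\{(13)(24),(14)(23)\}$ and $\{(1324),(1423)\}$, and the last one contains no involution. Hence $1\to C_2\to D_8\to V_4\to 1$ does not split, which is exactly the dash in the table. The paper's later non-neutrality argument for $C_2^d$ relies on an even stronger fact: no commuting lift of the pair $(\bar r,\bar s)$ exists. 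Your Step~3 test is also inadequate when $N/G$ is not cyclic. Finding an involution over one generator is not enough; you need pairwise commuting involutions in all three nontrivial cosets.

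The second error is your claim that $A_4$ is self-normalizing. Its orbits are $\{1,2,3,4\}$ and $\{5\}$, and it is normal in $S_4$. It has only $5$ conjugates in $S_5$, so $N_{S_5}(A_4)=S_4$ and $N/G\cong C_2$, split by $\langle(12)\rangle$ as in the table. The conjugate count you propose would have exposed this.

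A smaller slip is in your Step~1 completeness argument, which asserts that subgroups of order $12$ fix a point. This is false for $S_3\times S_2=\langle(123),(12),(45)\rangle$, which is on your own list. Argue instead by cases on an order-$12$ subgroup $H$:
\begin{itemize}
\item If its Sylow $3$-subgroup is normal, then $H\subseteq N(C_3)=S_3\times S_2$, so $H=S_3\times S_2$.
\item Otherwise $H\cong A_4$. Its normal Klein subgroup must be conjugate to $V_4^K$, since $|N(V_4^{\mathrm{tr}})|=8$. This forces $H$ into a point stabilizer $S_4$.
\end{itemize}
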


\begin{table}[ht]
\centering
\caption{Normalizer extensions for subgroups of $S_5$}
\label{tab:s5-normalizers}
\resizebox{\textwidth}{!}{%
\begin{tabular}{llll}
\toprule
$G$ & $N_{S_5}(G)$ & $N/G$ & A complement to $G$ in $N$\\
\midrule
$1$ & $S_5$ & $S_5$ & $S_5$\\
$C_2^t$ & $C_2\times S_{\{3,4,5\}}$ & $S_3$ & $S_{\{3,4,5\}}$\\
$C_2^d$ & $D_8$ & $V_4$ & --\\
$C_3=\langle(123)\rangle$ & $S_{\{1,2,3\}}\times S_{\{4,5\}}$ & $V_4$ & $\langle(12),(45)\rangle$\\
$V_4^{\mathrm{tr}}$ & $S_2\wr S_2$ on $\{1,2,3,4\}$ & $C_2$ & $\langle(13)(24)\rangle$\\
$C_4=\langle(1234)\rangle$ & $D_8$ & $C_2$ & $\langle(13)\rangle$\\
$V_4^K$ & $S_{\{1,2,3,4\}}$ & $S_3$ & $S_{\{1,2,3\}}$\\
$C_5=\langle(12345)\rangle$ & $F_{20}$ & $C_4$ & $\langle(2354)\rangle$\\
$S_3^{\mathrm{fix}}$ & $S_{\{1,2,3\}}\times S_{\{4,5\}}$ & $C_2$ & $\langle(45)\rangle$\\
$C_6=\langle(123),(45)\rangle$ & $S_{\{1,2,3\}}\times S_{\{4,5\}}$ & $C_2$ & $\langle(12)\rangle$\\
$S_3^{\mathrm{tw}}$ & $S_{\{1,2,3\}}\times S_{\{4,5\}}$ & $C_2$ & $\langle(45)\rangle$\\
$D_8=\langle(1234),(13)\rangle$ & $D_8$ & $1$ & $1$\\
$D_{10}=\langle(12345),(25)(34)\rangle$ & $F_{20}$ & $C_2$ & --\\
$S_3\times C_2$ & $S_3\times C_2$ & $1$ & $1$\\
$A_4$ on $\{1,2,3,4\}$ & $S_{\{1,2,3,4\}}$ & $C_2$ & $\langle(12)\rangle$\\
$F_{20}$ & $F_{20}$ & $1$ & $1$\\
$S_4$ fixing $5$ & $S_4$ & $1$ & $1$\\
$A_5$ & $S_5$ & $C_2$ & $\langle(12)\rangle$\\
$S_5$ & $S_5$ & $1$ & $1$\\
\bottomrule
\end{tabular}}
\end{table}
\begin{proof}
    A classification of these subgroups is provided in \cite[Appendix, \S2, pp.~198--200]{Swallow2004Subgroups}. The table can be completed by performing a case by case computation. 
    The notation $S_2\wr S_2$ represents the group $(S_2\times S_2)\rtimes S_2$.
\end{proof}

\begin{coro}
\label{cor:s5-most-neutral}
Every subgroup in Table~\ref{tab:s5-normalizers}, except $C_2^d$ and $D_{10}$, is neutral.
\end{coro}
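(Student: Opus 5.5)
Since $S_5$ is a constant group scheme, every subgroup $G\subset S_5(K)$ is Galois stable, and so is its normalizer $N=N_{S_5}(G)$, which is again constant. By Proposition~\ref{prop:normalizer}, $N$ is a gate for $G$, and $G$ is normal in $N$. Theorem~\ref{thm:coh-crit} therefore reduces neutrality of $G$ to the surjectivity of
\[
\H^1(k',N)\ra \H^1(k',N/G)
\]
for every extension $k'/k$. I would go through the rows of Table~\ref{tab:s5-normalizers} and check this criterion row by row.

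\textbf{Self-normalizing rows.} For $D_8$, $S_3\times C_2$, $F_{20}$, $S_4$ and $S_5$ we have $N/G=1$, and the corresponding corollary of Proposition~\ref{prop:normalizer} (self-normalizing and Galois invariant implies neutral) applies directly.

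\textbf{Split rows.} Every other row except $C_2^d$ and $D_{10}$ has a complement $H\subset N$ listed in the last column. $H$ is a constant subgroup scheme, and the composite $H\hookrightarrow N\ra N/G$ is an isomorphism of constant group schemes. Hence
\[
\H^1(k',H)\ra \H^1(k',N)\ra \H^1(k',N/G)
\]
composes to a bijection, and the second map is surjective for every $k'$. Theorem~\ref{thm:coh-crit} then gives neutrality. This covers
\[
1,\ C_2^t,\ C_3,\ V_4^{\mathrm{tr}},\ C_4,\ V_4^K,\ C_5,\ S_3^{\mathrm{fix}},\ C_6,\ S_3^{\mathrm{tw}},\ A_4,\ A_5 .
\]
For the trivial group this is tautological: a $j$-gerbe with trivial inertia image is trivial, hence neutral.

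\textbf{Main obstacle.} The hard part is not the argument, which is formal once Theorem~\ref{thm:coh-crit} is available. It is the correctness of the table entries: one must confirm, for each listed $H$, that $H\cap G=1$ and $H\subset N$, so that $H$ maps isomorphically onto $N/G$. For instance:
\begin{itemize}
\item For $C_5$, the element $(2354)$ normalizes $\langle(12345)\rangle$ and has order $4$.
\item For $C_6=\langle(123),(45)\rangle$, the element $(12)$ lies in $S_3\times S_2$ but not in $C_6$.
\end{itemize}
I would verify these membership and intersection conditions case by case, which is a routine finite computation.
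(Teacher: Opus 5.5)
Your proposal is correct and is essentially the paper's argument: the normalizer is a gate by Proposition~\ref{prop:normalizer}, and the complement listed in Table~\ref{tab:s5-normalizers} gives a section of $N\to N/G$, so $\H^1(k',N)\to\H^1(k',N/G)$ is surjective and Theorem~\ref{thm:coh-crit} yields neutrality. Your separate treatment of the self-normalizing rows via the corollary is just the trivial-complement case of the same reasoning, and the table entries you flag for verification (membership in $N$, trivial intersection with $G$) do check out.
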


\begin{proof}
For every other row, the normalizer extension splits. Apply Proposition~\ref{prop:normalizer} and Theorem~\ref{thm:coh-crit}.
\end{proof}

To finish, we need to study the two remaining cases. Let $G=C_2^d=\langle z\rangle$, where $z=(12)(34)$.  Choose
\[
    r=(1324),
    \quad
    s=(12).
\]
Then $r^2=z$ and
\[
    N_{S_5}(G)=\langle r,s\mid r^4=s^2=1,\ srs=r^{-1}\rangle\simeq D_8.
\]
Thus we have a short exact sequence
\begin{equation}
    1\ra \langle r^2\rangle\ra D_8\ra V_4\ra 1.
\end{equation}
We assert that $G$ is not neutral. To see this over an arbitrary base field $k$, take
$$
    F=K((x))((y)).
$$
Then $\Gal(F^{\mathrm{sep}}/F)\cong\hat{\bZ}\times\hat{\bZ}$. Consider
$$(r,s) \in \H^1(F,V_4) \cong \Hom(\hat{\bZ}\times \hat{\bZ},V_4).$$
We will prove that $(r,s)$ cannot be lifted to an element of $\H^1(F,D_8)$. If such a lift existed, it would necessarily have the form $(r^{2i+1}, r^{2j}s)$ for some $i,j \in \bZ$. However, a straightforward verification shows that the pair $(r^{2i+1}, r^{2j}s)$ is never commutative. So by \ref{thm:coh-crit}, $G$ is not neutral.

Let $G=D_{10}$, set
\[
    r=(12345),
    \quad
    a=(2354).
\]
Then $a$ has order $4$, $ara^{-1}=r^2$, and
\[
    F_{20}=\langle r,a\rangle,
    \qquad
    D_{10}=\langle r,a^2\rangle.
\]
Therefore the following sequnce is exact
\begin{equation}
\label{eq:f20-extension}
    1\ra D_{10}\ra F_{20}\ra C_2\ra 1.
\end{equation}

\begin{lemm}
\label{lem:f20-image}
For every field $F/k$,
\[
    \mathrm{im}(\H^1(F,F_{20})\ra \H^1(F,C_2))
    =
    \mathrm{im}(\H^1(F,C_4)\ra \H^1(F,C_2)).
\]
\end{lemm}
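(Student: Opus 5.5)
The plan is to compare both images through the quotient $F_{20}\to F_{20}/C_5\simeq C_4$, where $C_5=\langle r\rangle$. All groups here are constant finite group schemes, so for every field $F/k$ we have $\H^1(F,-)=\Hom_{\mathrm{cont}}(\Gal(F^{\mathrm{sep}}/F),-)/\text{conjugation}$. The maps in question are induced by group homomorphisms, and both inclusions will follow from functoriality.

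\textbf{Step 1: factoring the projection.} The subgroup $C_5$ is normal in $F_{20}$ and is contained in $D_{10}$. Hence the projection $q\colon F_{20}\to F_{20}/D_{10}\simeq C_2$ from \eqref{eq:f20-extension} factors as
\[
F_{20}\xrightarrow{\ p\ }F_{20}/C_5\simeq C_4\xrightarrow{\ \pi\ }C_2 .
\]
Here $p(a)$ is a generator of $C_4$. Since $a^2\in D_{10}$ and $a\notin D_{10}$, the map $\pi$ is the unique surjection $C_4\to C_2$, with kernel $\langle p(a)^2\rangle$.

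\textbf{Step 2: the inclusion ``$\subseteq$''.} By functoriality, $q_*=\pi_*\circ p_*$ on $\H^1(F,-)$. Therefore
\[
\mathrm{im}\bigl(\H^1(F,F_{20})\ra \H^1(F,C_2)\bigr)\subseteq\mathrm{im}\bigl(\H^1(F,C_4)\xrightarrow{\pi_*}\H^1(F,C_2)\bigr).
\]

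\textbf{Step 3: the inclusion ``$\supseteq$''.} Here I would use the complement $\langle a\rangle$ to $C_5$ listed in Table~\ref{tab:s5-normalizers}. Let $s\colon C_4\to F_{20}$ send the generator $p(a)$ to $a$. Then $p\circ s=\id_{C_4}$, so $q\circ s=\pi$. Now take a class $\beta\in \H^1(F,C_4)$, represented by a homomorphism $\Gal(F^{\mathrm{sep}}/F)\to C_4$. The class $s_*\beta\in \H^1(F,F_{20})$ then satisfies $q_*(s_*\beta)=\pi_*\beta$. This gives the reverse inclusion.

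\textbf{Expected obstacle.} There is no serious difficulty. The only point requiring care is to check that the map $\H^1(F,C_4)\to\H^1(F,C_2)$ in the statement is the one induced by $\pi$. That is, one must verify that the identification of $C_4$ with $\langle a\rangle$ (equivalently with $F_{20}/C_5$) is compatible with the projection to $F_{20}/D_{10}$. This holds because $a\mapsto$ the generator of $C_2$ and $a^2\mapsto 1$. Once this is confirmed, the rest is purely formal, since the splitting $s$ of $p$ exists.
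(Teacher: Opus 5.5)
Your proof is correct and is essentially the paper's argument: the quotient $F_{20}\to F_{20}/C_5\simeq C_4$ gives the inclusion $\subseteq$, and the complement $\langle a\rangle\subset F_{20}$, which is a section of that quotient, gives $\supseteq$. You also write out the compatibilities $q=\pi\circ p$ and $q\circ s=\pi$, which the paper's one-line proof leaves implicit.
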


\begin{proof}
The projection $F_{20}=C_5\rtimes C_4\to C_4$ provides one embedding, and the subgroup $C_4=\langle a\rangle\subset F_{20}$ yields an embedding in the opposite direction.
\end{proof}

Consider now the short exact sequence
$$1\to C_2 \to C_4\to C_2\to 1.$$
When $\sqrt{-1}\in k$, this sequence can be identified with
$$1\to \mu_2\ra \mu_4\ra \mu_2\ra 1.$$
For any field extension $F/k$, the map $\H^1(F,\mu_4)\ra \H^1(F,\mu_2)$ is the homomorphism $$F^{\times}/F^{\times 4}\ra F^{\times}/F^{\times 2},$$ which is always surjective. Hence, by \ref{thm:coh-crit}, $D_{10}$ is neutral in this case.

Now assume $\sqrt{-1}\notin k$ and put $F=k((t))$. The quadratic extension $F(\sqrt t)/F$ does not embed in a cyclic quartic extension. Indeed, such a cyclic quartic extension would be totally ramified: otherwise its unique quadratic subextension would be unramified, whereas $F(\sqrt t)/F$ is ramified. It would also be tamely ramified. If $\sigma$ generated its Galois group and $\pi$ were a uniformizer, the residue of $\sigma(\pi)/\pi$ would be a primitive fourth root of unity in $k$, a contradiction. Thus the class of $F(\sqrt t)/F$ in $\H^1(F,C_2)$ is not in the image of $\H^1(F,C_4)$. By Lemma~\ref{lem:f20-image} and Theorem~\ref{thm:coh-crit}, the group $D_{10}$ is not neutral. Thus we obtain:

\begin{theo}
\label{thm:neutral-s5}
Up to conjugacy, every subgroup of $S_5$ is neutral except:
\begin{enumerate}[label=\textnormal{(\roman*)}]
    \item $\langle(12)(34)\rangle\simeq C_2$, which is never neutral;
    \item the transitive subgroup $D_{10}=\langle(12345),(25)(34)\rangle$, which is neutral precisely when $\sqrt{-1}\in k$.
\end{enumerate}
\end{theo}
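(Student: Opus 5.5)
The theorem collects the case analysis carried out above, so the plan is to assemble it from Corollary~\ref{cor:s5-most-neutral} and the two exceptional computations, and to check the ingredients each step uses. The subgroup classification in Proposition~\ref{prop:s5-normalizer-table} reduces the statement to the nineteen conjugacy classes. Since $S_5$ is a constant group scheme, every subgroup is Galois stable, and so is its normalizer. For each subgroup $G$ we take the normalizer $N=N_{S_5}(G)$ as a gate, which Proposition~\ref{prop:normalizer} allows. Theorem~\ref{thm:coh-crit} then says that $G$ is neutral if and only if $\H^1(F,N)\to\H^1(F,N/G)$ is surjective for every extension $F/k$.

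For the seventeen rows whose normalizer extension splits, I will show surjectivity directly. A complement $H\subset N$ maps isomorphically onto $N/G$, so any class in $\H^1(F,N/G)$ lifts through $\H^1(F,H)\to\H^1(F,N)$. The only real work is checking the complements listed in Table~\ref{tab:s5-normalizers}. For each row I will verify that the proposed $H$ meets $G$ trivially and has order $|N|/|G|$. An example is $\langle(2354)\rangle$, which meets $C_5$ trivially inside $F_{20}$.

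For $C_2^d$ I will use the field $F=K((x))((y))$ with absolute Galois group $\hat{\bZ}^2$. Here $\H^1(F,\cdot)$ is the set of commuting pairs modulo conjugation. The pair of images of $(r,s)$ in $V_4$ would have to lift to a commuting pair in $D_8$ of the form $(r^{\mathrm{odd}},r^{\mathrm{even}}s)$. No such pair commutes, because a reflection inverts $r^{\pm1}\neq r^{\mp1}$. Hence this $G$ is never neutral, whatever $k$ is.

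For $D_{10}$ I will first use Lemma~\ref{lem:f20-image} to replace $F_{20}\to C_2$ by $C_4\to C_2$. If $\sqrt{-1}\in k$, this map is $\mu_4\to\mu_2$, and Kummer theory gives surjectivity $F^\times/F^{\times4}\to F^\times/F^{\times2}$. If $\sqrt{-1}\notin k$, I will take $F=k((t))$. A cyclic quartic extension containing $F(\sqrt t)$ would be totally and tamely ramified, which would force $\mu_4\subset k$, so $F(\sqrt t)/F$ does not lift.

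The main obstacle is checking that the normalizers in Table~\ref{tab:s5-normalizers} are computed correctly and that the two non-split rows genuinely have no complement. Everything else is a formal consequence of the gate machinery.
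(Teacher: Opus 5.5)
Your proposal is correct and follows the paper's argument essentially step for step. The three parts are:
\begin{itemize}
\item for the seventeen split rows, the normalizer serves as a gate and a complement gives surjectivity in Theorem~\ref{thm:coh-crit};
\item for $C_2^d$, the candidate lifts $(r^{2i+1},r^{2j}s)$ over $K((x))((y))$ never commute;
\item for $D_{10}$, Lemma~\ref{lem:f20-image} is followed by the Kummer argument when $\sqrt{-1}\in k$ and the tame-ramification argument over $k((t))$ when $\sqrt{-1}\notin k$.
\end{itemize}
One step you list is superfluous: there is no separate need to verify that the two exceptional rows have no complement, since the failure of surjectivity you exhibit already forces this.
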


\section{Proof of the main theorem}\label{sect:proof}
For simplicity, we now base change to $k = k_C$ and set $\cG=\cG_C$. Under this assumption, the curve $C$ is defined over its field of moduli precisely when $\cG$ has a $k$-rational point. As established in Section~\ref{sect:gerbes}, there always exists an algebraic group scheme $\Gamma$ over $k$ together with a morphism $\cG \to B\Gamma$ defined over $k$. In the hyperelliptic and trigonal cases we have $\Gamma = \PGL_2$; in the plane quintic case, $\Gamma = \PGL_3$; in the bielliptic case, $\Gamma = \GL_5 \times \bG_m$; and in the del Pezzo case, $\Gamma$ is the automorphism group of a split model of the corresponding surface. In this section we will exploit these morphisms in order to prove the main theorem stated in the introduction. 

We start by recalling two known cases, namely, the hyperelliptic and plane quintic cases.

\subsection{Hyperelliptic}

Let $\iota$ be the hyperelliptic involution. Huggins proves the following precise criterion.

\begin{theo}[Huggins]
\label{thm:huggins}
Let $C$ be hyperelliptic in characteristic different from $2$.  If
\[
    \Aut(C)/\langle\iota\rangle
\]
is non-cyclic, then $C$ is defined over its field of moduli.
\end{theo}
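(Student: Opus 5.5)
We will prove Huggins' criterion within the gerbe framework of Section~\ref{sect:preliminary}, using the faithful-after-factorization morphism $\cG\to B\PGL_2$ coming from the canonical $g^1_2$. Set $G=\Aut(C)$ and $\bar G=G/\langle\iota\rangle$. The hyperelliptic involution is central and intrinsic, being the deck transformation of the unique $g^1_2$. Hence $\cG\to B\PGL_2$ has kernel $\langle\iota\rangle$ on inertia. Applying Lemma~\ref{canonical-factorization}, we obtain a factorization
\[
\cG\to\cG'\to B\PGL_2,
\]
where $\cG\to\cG'$ is locally full with fibres $B\mu_2$-gerbes, and $\cG'\to B\PGL_2$ is a $j$-gerbe for the embedding $j\colon\bar G\hookrightarrow\PGL_2(K)$. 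Concretely, $\cG'$ is the residue gerbe of the pair $(\bP^1_K,\text{branch divisor }B)$, where $B$ consists of $2g+2=14$ points.

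\textbf{Step 1: neutrality of $\cG'$.} We use the classification of finite subgroups of $\PGL_2(K)$: cyclic, dihedral $D_{2n}$, $A_4$, $S_4$, $A_5$. For the non-cyclic ones we show neutrality via Proposition~\ref{prop:normalizer} and Theorem~\ref{thm:coh-crit}. For $A_4,S_4,A_5$ and for $D_{2n}$ with $n\ge3$, the normalizer is finite, namely $S_4$, $S_4$, $A_5$ and $D_{4n}$ respectively. We then check that the normalizer extensions split, or that the relevant quotient map on $\H^1$ is surjective. For $D_4=V_4$ the normalizer is $S_4$ and $S_4/V_4\cong S_3$ has a complement. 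For $D_{2n}$ with $n\ge 3$, $D_{4n}/D_{2n}\cong C_2$ is generated by a reflection class, which lifts to an involution. Hence $\cG'(k)\neq\emptyset$, so the quotient $\bP^1$ together with $B$ descends to a conic $Q$ over $k$ carrying a $k$-rational divisor $B$ of degree $14$.

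\textbf{Step 2: lifting to $\cG$.} The fibre of $\cG\to\cG'$ over the $k$-point found in Step 1 is a $\mu_2$-gerbe. It is neutral iff we can write $C$ as a double cover $y^2=f$ of $Q$ branched along $B$ over $k$. The obstruction lives in $\H^2(k,\mu_2)$ and combines the class of the conic $Q$ with the class $[\cO(B/2)]$. Since $\deg B=14$ is even, $\cO(B)\cong\cO_Q(7)$, where $\cO_Q(1)$ has degree $2$ on $Q$, and the class of this line bundle is defined over $k$. The double cover is then given by $\mathcal Spec(\cO\oplus\cO_Q(-7))$ with multiplication determined by the section defining $B$. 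Because $g+1=7$ is odd, this needs $\cO_Q(7)$ to exist, which requires $Q$ to have a $k$-point or a compensating twist.

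\textbf{Main obstacle.} The difficulty is exactly this last step. We first try to show that when $\bar G$ is non-cyclic, $\bar G$ has orbits on $\bP^1$ of odd size, or that the various $k$-points of $\cG'$ (changing the choice over the normalizer cosets) can be modified to kill the $\mu_2$-obstruction. We expect to follow Huggins' case-by-case analysis: in each non-cyclic case, use a $\bar G$-invariant divisor of odd degree, or equivalently the $\bar G$-fixed structure on the ramification, to produce a rational point on $Q$ or an odd-degree $k$-rational divisor. This trivializes the Brauer class and makes the $\mu_2$-gerbe neutral, yielding $\cG(k)\neq\emptyset$.
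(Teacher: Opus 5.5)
\textbf{There is a genuine gap.} Step~2 is where the proof has to happen, and it is left open. The strategy you sketch for it also cannot work as stated.

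\textbf{Why Step~2 fails as set up.} Once \emph{some} $k$-form $(Q,B_0)$ of $(\bP^1_K,B)$ has $Q\cong\bP^1_k$, the curve $y^2=f(x)$, with $f\in k[x]$ cutting out $B_0$, is already a model of $C$. So the only real issue is choosing the right $k$-point of $\cG'$, and the point produced abstractly in Step~1 may be the wrong one. For example, let $\bar G=V_4$ be generated by $x\mapsto -x$ and $x\mapsto 1/x$. Twisting by a class in $\H^1(k,V_4)$ whose image in $\H^1(k,\PGL_2)$ is a nonsplit quaternion algebra gives a form with $Q$ pointless. Since $g=6$ is even, no double cover of $Q$ over $k$ branched along $B_0$ exists there: the quotient $C_0/\iota$ of any model $C_0$ contains a $k$-rational divisor of degree $g-1=5$, hence a $k$-point. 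So your $\H^2(k,\mu_2)$-obstruction genuinely does not vanish at that point.

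\textbf{Why your proposed remedy fails.} A $\bar G$-invariant divisor of odd degree on $Q$ does not exist in most non-cyclic cases. Every $\bar G$-orbit on $\bP^1_K$ has even size for:
\begin{itemize}
\item $D_{2n}$ with $n$ even, including $V_4$ (sizes $2,n,n,2n$);
\item $A_4$ (sizes $4,4,6,12$);
\item $S_4$ (sizes $6,8,12,24$);
\item $A_5$ (sizes $12,20,30,60$).
\end{itemize}
There is also a slip in the line-bundle bookkeeping. The cover needs $L$ with $L^{\otimes 2}\cong\cO(B)$, which is a bundle of degree $7$, not $\cO_Q(7)$ of degree $14$ in your normalization. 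Also, ``a compensating twist'' has to mean replacing the $k$-point of $\cG'$ by another one.

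\textbf{The missing idea: look for the rational divisor on the quotient, not on $Q$.}
\begin{itemize}
\item Let $\cP\to\cG'$ be the universal conic, with coarse space $\bfP$; it is a $k$-form of $\bP^1_K/\bar G\cong\bP^1_K$. Let $\cX\to\cG$ be the universal curve, with coarse space $\bfX$. Since $C/G\cong\bP^1_K/\bar G$, you get $\bfX\cong\bfP$.
\item The branch points of $\bP^1_K\to\bP^1_K/\bar G$, together with their stabilizer orders, are intrinsic, so they descend to $\bfP$.
\item For non-cyclic $\bar G$ the signature is $(2,2,n)$ with $n\ge 3$, $(2,3,3)$, $(2,3,4)$ or $(2,3,5)$. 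In each case some branch point is singled out by its index, hence is $k$-rational. For $V_4$ the three branch points form a $k$-rational divisor of degree $3$, which forces a $k$-point.
\item Either way $\bfX\cong\bP^1_k$, so $\bfX$ has $k$-points outside the branch locus of $C\to C/G$. Over that locus $\cX\to\bfX$ is an isomorphism, because $G$ acts faithfully on $C$.
\item Hence $\cX(k)\neq\emptyset$, and therefore $\cG(k)\neq\emptyset$.
\end{itemize}
This is exactly the argument of Proposition~\ref{prop:trigonal-main} with $I=\langle\iota\rangle$ (see the remark following it). It makes both the $\mu_2$-gerbe analysis and the normalizer computations of your Step~1 unnecessary. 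In the paper itself, the statement is simply quoted from Huggins rather than reproved.
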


\begin{proof}[Reference]
This is \cite[Theorem~5.4]{Huggins2007}.  The same paper constructs examples with cyclic reduced automorphism group that do not descend.
\end{proof}

A gerbe-theoretic reformulation is given in \cite{BrescianiRational}.

\subsection{Plane quintics}

\begin{prop}[Bresciani]
\label{prop:plane-quintic}
Every smooth plane quintic is defined over its field of moduli.
\end{prop}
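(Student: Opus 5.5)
The plan is to reduce the statement to Bresciani's theorem on plane curves of odd degree via the morphism $\cG_C\to B\PGL_3$ built in the plane quintic subsection of Section~\ref{sect:gerbes}, and then to show that this gerbe is neutral.

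First, I recall that the embedding $C\hookrightarrow\bP^2_K$ is unique up to $\PGL_3(K)$. The reason is that $\cO_C(1)$ is the unique $g^2_5$ on a smooth plane quintic. As in Proposition~\ref{prop:intrinsic-del-pezzo}, every $\sigma$-semilinear isomorphism $C^\sigma\to C$ then extends uniquely to $(\bP^{2,\sigma}_K,C^\sigma)\to(\bP^2_K,C)$. This gives $k_C=k_{(\bP^2,C)}$ and $\cG_C\simeq\cG_{(\bP^2,C)}$. The Isom-torsor construction then yields a faithful morphism $\cG_C\to B\PGL_3$, and $C$ descends to $k_C$ if and only if $\cG_C(k_C)\neq\emptyset$.

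Next I use that the degree $5$ is odd to lift $\cG_C\to B\PGL_3$ to $B\GL_3$. Over $\cG_C$ there is the Brauer–Severi scheme $\cP\to\cG_C$, namely the twisted form of $\bP^2$ that contains the universal curve $\cC$. The divisor $\cC\subset\cP$ has relative degree $5$, so it defines a line bundle $\cO_{\cP}(\cC)$ that is fibrewise $\cO(5)$. The relative anticanonical bundle is fibrewise $\cO(3)$. Since $\gcd(5,3)=1$, the combination $\cO(\cC)^{2}\otimes\omega_{\cP/\cG}^{3}$ is fibrewise $\cO(1)$. Thus the Brauer class of $\cP$ dies on $\cG_C$, and $\cP\simeq\bP(\cV)$ for a rank $3$ vector bundle $\cV$ on $\cG_C$.

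The remaining, and hardest, step is to pass from a projective bundle that is trivial over the gerbe to a $k_C$-point of the gerbe itself. Here I will invoke \cite[Theorem~1.1]{plane-curve}, whose content is exactly this step: for smooth plane curves of degree $d$ coprime to $3$ the residue gerbe is neutral. If I had to argue it directly, I would use the dictionary of \cite{neutral-representation-dim-leq3}. One considers the image $G\subset\PGL_3(K)$ of $\Aut(C)$ and uses the lift of $G$ to $\GL_3$ coming from $\cV$, which makes $\cG_C\to B\GL_3$ a $j$-gerbe. One then runs through the finite subgroups of $\PGL_3$ that occur as automorphism groups of plane quintics, checking via Theorem~\ref{thm:coh-crit} and Corollary~\ref{coro:special-normalizer} that each is neutral in $\GL_3$. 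Typically the normalizer quotient contains a torus-type special group, or the representation has a one-dimensional isotypic summand that gives a rational point of $\cP$ off $\cC$. I expect the genuinely delicate cases to be those where $G$ acts irreducibly on $K^3$, for instance Hessian-type groups; they are handled in \cite{plane-curve}, so I will cite that result rather than redo it.
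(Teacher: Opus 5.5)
Your proposal is correct and follows the paper's proof: the uniqueness of the $g^2_5$ makes the plane embedding intrinsic, which gives $\cG_C\to B\PGL_3$, and descent is then obtained by citing \cite[Theorem~1.1]{plane-curve}, so your $\GL_3$-lift and the sketched case analysis are extra rather than needed. One small correction: the paper quotes that theorem for degree prime to $6$, not merely prime to $3$, which is harmless here since $\gcd(5,6)=1$.
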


\begin{proof}
Every isomorphism between smooth plane curves of degree at least $4$ is induced by a projective linear transformation, so the plane embedding is intrinsic. We have used this fact to construct $\cG\to B\PGL_3$. By \cite[Theorem~1.1]{plane-curve}, every smooth plane curve whose degree is prime to $6$ is defined by a homogeneous polynomial over its field of moduli. Since $\gcd(5,6)=1$, the theorem applies.
\end{proof}

\subsection{Trigonal curves}

Let $f\colon C\ra \bP^1_K$ be the unique trigonal map (up to an isomorphism of $\bP^1_K$) and put $G=\Aut(C)$. Every automorphism preserves the pencil, so there is an exact sequence
\begin{equation}
\label{eq:trigonal-exact}
    1\ra I\ra G\ra H\ra 1,
\end{equation}
where $H\subset \PGL_2(K)$ and $I$ acts fibrewise.

\begin{lemm}
\label{lem:trigonal-kernel}
The group $I$ is either trivial or isomorphic to $C_3$.
\end{lemm}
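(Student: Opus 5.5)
The plan is to identify $I$ with a subgroup of the deck group of the trigonal map and bound its order by the degree. By definition $I$ is the kernel of $G\to H\subset\PGL_2(K)$. So $I$ consists of those automorphisms $u$ of $C$ with $f\circ u=f$, i.e.\ the automorphisms of $C$ over $\bP^1_K$. Equivalently, $I$ is the group of $K(\bP^1)$-automorphisms of the function field $K(C)$. Since $f$ has degree $3$, the extension $K(C)/K(t)$ has degree $3$. A finite group of field automorphisms of $K(C)$ fixing $K(t)$ has order at most $[K(C):K(t)]=3$, by Artin's theorem: $[K(C):K(C)^I]=|I|$ and $K(t)\subset K(C)^I$. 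Hence $|I|\in\{1,2,3\}$, and $|I|$ divides $3$.

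To see this divisibility directly, consider the tower $K(t)\subset K(C)^I\subset K(C)$. We have $|I|=[K(C):K(C)^I]$, which divides $[K(C):K(t)]=3$. So $|I|\in\{1,3\}$, which excludes order $2$ at once. If $|I|=3$ then $I\cong C_3$ and $f$ is a cyclic Galois cover. Otherwise $I$ is trivial.

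Characteristic $0$ gives separability, so Artin's theorem applies verbatim. One should also confirm that "acts fibrewise" in \eqref{eq:trigonal-exact} means exactly $f\circ u=f$. This holds because $H$ is defined as the image in $\Aut(\bP^1_K)$, and the induced automorphism of $\bP^1_K$ is unique since $f$ is surjective.

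The only real obstacle is this bookkeeping: making sure the map $G\to\PGL_2(K)$ is well defined. Each $u\in G$ must induce a unique automorphism of $\bP^1_K$ compatible with $f$, which follows from the uniqueness of the $g^1_3$ (genus $6\geq5$) recalled in Section~\ref{sect:classification}. After that, the degree argument is immediate.
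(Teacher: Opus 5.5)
Your proof is correct, and it takes a slightly different route from the paper's. The paper first embeds $I$ into $S_3$ through its action on an unramified fibre of $f$. It then rules out elements of order $2$ by observing that an involution $u\in I$ would make $f$ factor through the degree-$2$ quotient $C\to C/\langle u\rangle$, which is incompatible with $\deg f=3$. An odd-order subgroup of $S_3$ is then $1$ or $C_3$. You instead apply Artin's theorem and the tower law to $K(t)\subset K(C)^I\subset K(C)$ for the whole group $I$ at once, which gives $|I|\mid 3$ directly.

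The underlying idea is the same in both: degrees multiply in a factorization. The paper applies it element by element after a geometric reduction to $S_3$. Your version needs no step showing that $I$ acts faithfully on a fibre. It also extends verbatim to the $p$-gonal remark following the trigonal proposition: $|I|$ divides $p$, so $I$ is trivial or $C_p$.

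One small correction: Artin's theorem holds for any finite group of automorphisms of a field, so you do not need characteristic $0$ for separability there.
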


\begin{proof}
On the open set where $f$ is unramified, $I$ acts faithfully on a fibre of three points, hence $I\subset S_3$. If $I$ contained an element of order $2$, the map $f$ would factor through a degree $2$ quotient of $C$, which is impossible because $\deg f=3$. Thus $I$ has odd order and is either trivial or $C_3$.
\end{proof}

\begin{prop}
\label{prop:trigonal-main}
Let $C$ be a smooth trigonal curve of genus $6$.  If $C$ is not defined over its field of moduli, then either $\Aut(C)$ is cyclic or there is an exact sequence
\[
    1\ra C_3\ra \Aut(C)\ra C_n\ra 1
\]
for some $n\geq 1$.
\end{prop}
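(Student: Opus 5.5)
We use the morphism $\cG\to B\PGL_2$ built from the unique trigonal pencil. By Lemma~\ref{canonical-factorization}, this morphism factors as $\cG\to\cG'\to B\PGL_2$, where $\cG\to\cG'$ is locally full and $\cG'\to B\PGL_2$ is faithful. Concretely, $\cG'$ is the residue gerbe of the pair $(\bP^1_K, \text{branch data of } f)$, that is, the gerbe of the target line together with its marked structure. Its geometric inertia group is the image $H\subset\PGL_2(K)$ of $G$ in the exact sequence \eqref{eq:trigonal-exact}. The kernel of $\cG\to\cG'$ on inertia is $I$, which by Lemma~\ref{lem:trigonal-kernel} is trivial or $C_3$.

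\textbf{Step 1: neutrality of $\cG'$.} If $H$ is not cyclic, we show that $\cG'$ is neutral, using the known classification of neutral finite subgroups of $\PGL_2$ from \cite{bresciani-p1} and \cite{neutral-representation-dim-leq3}. The point is that a non-cyclic finite $H\subset \PGL_2(K)$ has a normalizer gate with special quotient, or a direct argument gives a rational point. In fact, for a $j$-gerbe with non-cyclic image, the twisted form of $\bP^1$ has a rational point of odd-degree type, so the Brauer obstruction vanishes and one can descend. Concretely, the Brauer–Severi curve $\cP$ attached to a point of $\cG'$ is split, because $H$ non-cyclic forces an invariant divisor of odd degree, or the gerbe $\cG'$ itself is neutral. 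Thus when $H$ is non-cyclic we obtain $\cG'(k)\neq\emptyset$, and the fibre of $\cG\to\cG'$ over this point is a gerbe $\cG''$ over $k$ banded by $I$.

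\textbf{Step 2: lifting the point.} We must show that $\cG''$ is neutral.
\begin{enumerate}[label=\textnormal{(\alph*)}]
\item If $I$ is trivial, then $\cG\to\cG'$ is an equivalence and the point lifts immediately.
\item If $I=C_3$, the fibre $\cG''$ is a gerbe with finite abelian band of order $3$, and the question is whether its class in $\H^2(k,C_3)$, with a possibly twisted band, vanishes.
\end{enumerate}
In case (b), using the $k$-point of $\cG'$ we obtain a model of $\bP^1$ with its $H$-structure over $k$. The curve $C$ is then a cyclic triple cover of this model. The obstruction to descending a $C_3$-cover whose branch data is defined over $k$ is exactly this $\H^2$ class.

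\textbf{Main obstacle.} Step 2 in the case $I=C_3$ is the part I expect to be hardest. The statement only allows a non-cyclic $H$ when $I$ is trivial; otherwise the conclusion already holds, since $G$ is a $C_3$-extension of $H$. On reflection, then, I will arrange the argument so that Step 2 is needed only when $I=1$. The contrapositive reads as follows:
\begin{itemize}
\item if $G$ is neither cyclic nor a $C_3$-extension of a cyclic group, then in particular $H=G/I$ is non-cyclic when $I=C_3$, and $H=G$ is non-cyclic when $I=1$;
\item for $I=1$, Steps 1 and 2(a) conclude;
\item for $I=C_3$ with $H$ non-cyclic, we need the lift.
\end{itemize}
For the lift, I would argue that $\cG\to\cG'$ is a gerbe banded by $C_3$, and that a rational point of $\cG'$ together with an $H$-invariant point configuration of odd degree on $\bP^1$ gives a $k$-rational branch point. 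The cyclic cover can then be normalized, via Kummer theory relative to that point, to be defined over $k$. This mirrors Huggins' argument and \cite[Corollary~3.5]{ArtebaniQuispe2012}, and that normalization is where I would concentrate the technical effort.
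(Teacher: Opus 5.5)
\textbf{There is a genuine gap: the case $I\cong C_3$ with $H$ non-cyclic is never proved.}

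Your treatment of $I=1$ is correct, and more direct than the paper's. There $\cG\to B\PGL_2$ is already faithful, so $\cG\simeq\cG'$, and neutrality follows from the cited neutrality of non-cyclic finite subgroups of $\PGL_2$. The missing case is $I\cong C_3$ with $H$ non-cyclic. Your contrapositive cannot avoid it, and you only sketch it.

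\emph{Why the sketch does not work.}
Fixing some $k$-point of $\cG'$ and then trying to neutralize the fibre gerbe, banded by a form of $C_3$, is not a complete strategy. That fibre need not be neutral for an arbitrary choice of point, since $\cG(k)\to\cG'(k)$ need not be surjective on isomorphism classes. Nothing in your sketch controls the choice. The proposed mechanism also breaks down:
\begin{itemize}
\item A cyclic trigonal cover of genus $6$ has $8$ branch points (Riemann--Hurwitz gives $10=-6+2r$), and none of them need be $k$-rational.
\item Non-cyclic $H$ does not force an invariant configuration of odd degree. For $A_4$, $S_4$, $A_5$, and $D_{2n}$ with $n$ even (including $V_4$), every $H$-orbit on $\bP^1_K$ has even size.
\item Even an odd-degree invariant divisor would only give a closed point of odd degree, not a rational branch point at which to normalize a Kummer equation.
\end{itemize}
Similarly, in Step~1 only the cited neutrality statement is valid. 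The conic attached to a $k$-point of $\cG'$ need not split: $V_4$ acts by sign changes on the non-split conic $x^2-ay^2-bz^2=0$.

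\emph{The missing idea: use universal families and their coarse spaces instead of fibres.}
\begin{itemize}
\item Let $\cX\to\cG$ be the universal curve and $\cP\to\cG'$ the pullback of $[\bP^1/\PGL_2]$. The trigonal map gives $\cX\to\cP$ over $\cG\to\cG'$, hence a morphism of coarse spaces $\psi\colon\bfX\to\bfP$. This is a twisted form of $C/G\to\bP^1_K/H$.
\item Precisely because $I\cong C_3$, we have $C/I\cong\bP^1_K$. So $C/G\cong(C/I)/H=\bP^1_K/H$, and $\psi$ is an isomorphism.
\item For $H$ non-cyclic, $\bfP\cong\bP^1_k$ by \cite[Theorem~5.2]{neutral-representation-dim-leq3}, so $\bfX\cong\bP^1_k$.
\item Since $G$ acts faithfully, hence generically freely, on $C$, the map $\cX\to\bfX$ is an isomorphism over a dense open. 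So it has a rational section, and Lang--Nishimura gives $\cX(k)\neq\emptyset$, hence $\cG(k)\neq\emptyset$.
\end{itemize}
In the language of your proposal: the image in $\cG'$ of a general $k$-point of $\cP$ is a point over which the fibre gerbe is neutral. It is this coarse-space argument that produces such a point.
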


\begin{proof}
We have shown that there exists a morphism $\cG\ra B\PGL_2$, geometrically, it corresponds to the trigonal morphism $f$.

Consider the canonical factorization $\cG \to \cH \to B\PGL_2$ of $\cG\ra B\PGL_2$, see \ref{canonical-factorization}. Let $\cP$ be the pullback of $[\bP^1/\PGL_2]$ along $\cH\to B\PGL_2$ and $\cX\ra \cG$ be the universal curve. Then we obtain a commutative diagram
\begin{center}
\begin{tikzcd}
\cX \arrow[r] \arrow[d] & \cP \arrow[d] \\
\cG \arrow[r]           & \cH          
\end{tikzcd}
\end{center}
where the upper horizontal morphism is nothing but the relative version of the trigonal morphism $f:C\ra \bP^1_K$.

Let $\bfX$ and $\bfP$ denote the coarse moduli spaces of $\cX$ and $\cP$, respectively. There is a natural morphism $\psi: \bfX \to \bfP$, which is a twisted form of the map $C/G \to \bP^1_K/H$. Since $f:C\ra \bP_K^1$ is a trigonal cover whose automorphism group is $I \cong C_3$, we obtain
\[
C/G \cong (C/I)/H \cong \bP^1_K/H,
\]
and therefore $\psi$ is an isomorphism as well.

Because $G$ acts faithfully on $C$, we obtain a rational section $\bfX \dashrightarrow \cX$. If $H$ is non-cyclic, we know that $\bfP \cong \bP^1_k$ by \cite[Theorem 5.2]{neutral-representation-dim-leq3}, and consequently $\bfX \cong \bP^1_k$. By the Lang–Nishimura theorem, this implies $\cX(k) \neq \emptyset$, and hence $\cG(k) \neq \emptyset$ as well.

Now assume that $I$ is trivial. In this situation, the action of $G$ on $\bP_K^1$ is generically faithful. Let $\cX'$ be the pullback of $[\bP^1/\PGL_2]$ along $\cG$. As before, we obtain a commutative diagram and a morphism of coarse moduli spaces $\psi' : \bfX' \to \bfP$, which is an isomorphism because it is a twisted form of the map $\bP^1_K/G \to \bP^1_K/H$. A similar argument show that $\cG(k)\neq \emptyset$ in this case as well.
\end{proof}

\begin{rema}
    The proof works for $p$-gonal genus $g$ curves as well, since all we need is $C/I\simeq \bP^1_K$ and that the pencil $p_g^1$ is unique.
\end{rema}

\subsection{Bielliptic} We refer to section~\ref{sect:bielliptic-gerbe-construction} for the construction of the faithful morphism $\cG\ra B\bG_m\times B\GL_5$. Consider the canonical factorization $\cG\ra \cH\ra \bG_m$ of the composite map $\cG\ra B\bG_m\times B\GL_5\ra B\bG_m$. By \cite[Proposition 5.1]{neutral-representation-dim-leq3}, the gerbe $\cH$ is neutral. Set $\cG_0:=\Spec k\times_{\cH}\cG$; it then suffices to show that $\cG_0$ is always neutral. 

Observe that the fundamental group of $\cG_0$ is precisely
\[
  J:=\ker\bigl(\chi_+:G\to\bG_m(K)\bigr),
\]
and the morphism $\cG_0\ra B\GL_5$ is a $j$-gerbe, where $j$ denotes the representation of $J$ on $\H^0(E,\omega_E\otimes L)$, see \ref{def:ggerbe}.

\begin{lemm}\label{lem:kernel-J}
The group $J$ in is cyclic and
\[
  J\cong C_2\quad\text{or}\quad C_{10}.
\]
\end{lemm}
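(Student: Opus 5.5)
The plan is to read $J$ off the induced action on the elliptic quotient $E=C/\langle\tau\rangle$. Since $\tau$ is central in $G$ (Section~\ref{sect:classification-detail}), every $u\in G$ descends to an automorphism $\bar u$ of $E$, giving a homomorphism $G\to\Aut(E)$ with kernel $\langle\tau\rangle$. By the identification $\H^0(C,\omega_C)^+\simeq \pi^*\H^0(E,\omega_E)$, the character $\chi_+(u)$ is the scalar by which $\bar u$ acts on the invariant differential of $E$. An automorphism of an elliptic curve fixes the invariant differential exactly when its linear part is trivial, i.e.\ when it is a translation. So $J$ is the preimage of the subgroup $T\subset\Aut(E)$ of translations that lift to $C$. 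Moreover $\tau\in J$, because $\tau$ acts by $+1$ on $\bE_+$. This gives a central extension
\[
1\ra\langle\tau\rangle\ra J\ra T\ra 1 .
\]

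Next I would bound $T$ in two ways.

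First, a lifted translation preserves the branch divisor $B$. Since $B$ is reduced of degree $10$ and a nontrivial translation has no fixed points, $B$ is a disjoint union of free $T$-orbits, so $|T|$ divides $10$.

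Second, a lift $u$ of $t_x\in T$ commutes with $\tau$. It therefore acts on $\pi_*\cO_C=\cO_E\oplus L^{-1}$ compatibly with $t_x$ and preserves the anti-invariant eigensheaf. This gives $t_x^*L\simeq L$. For a line bundle of degree $5$ on an elliptic curve, $t_x^*L\simeq L$ holds iff $5x=0$, by the theorem of the square ($t_x^*L\otimes L^{-1}$ corresponds to $\deg(L)\cdot x$). Hence $T\subset E[5]$.

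Combining the two bounds, $|T|\in\{1,5\}$. The first bound alone leaves the troublesome case $|T|=2$, where $u^2=\tau$ is not excluded by Riemann--Hurwitz: the quotient would have genus $1$, which is consistent. The second bound is what excludes it.

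It follows that $|J|=2$ or $|J|=10$. If $|J|=2$, then $J=\langle\tau\rangle\cong C_2$. If $|J|=10$, then $J$ is an extension of $C_5$ by the central $C_2=\langle\tau\rangle$, so $J\cong C_2\times C_5\cong C_{10}$, which is cyclic.

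The main obstacle is the step $t_x^*L\simeq L$. One has to argue carefully that the lift $u$ induces an isomorphism $t_x^*(\cO_E\oplus L^{-1})\simeq\cO_E\oplus L^{-1}$ of $\cO_E$-algebras respecting the $\tau$-eigen-decomposition. The two facts needed are that $u$ commutes with $\tau$ and that $\pi\circ u=t_x\circ\pi$. After that, the theorem-of-the-square computation is standard.
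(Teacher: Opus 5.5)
Your proposal is correct and follows the paper's proof essentially step for step: the same central extension $1\to\langle\tau\rangle\to J\to T\to 1$ with $T$ the translation part of the image of $G$ in $\Aut(E)$, and the same two constraints, $T\subset E[5]$ via $t_x^*L\simeq L$ and $|T|$ dividing $10$ via free $T$-orbits on the reduced branch divisor $B$. You also spell out two points the paper leaves implicit: why a lift commuting with $\tau$ forces $t_x^*L\simeq L$, and why a central extension of $C_5$ by $C_2$ is cyclic.
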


\begin{proof}
Put
\[
  \overline G:=G/\langle\tau\rangle,
\]
where $\tau$ is the unique involution. This group acts faithfully on $E\simeq C/\langle \tau \rangle$. After choosing an origin on $E$, every element of $\overline G$ has a unique expression
\[
  t_a\circ\alpha,
  \quad a\in E(K),\quad \alpha\in\Aut(E,0).
\]
Translations act trivially on $\H^0(E,\omega_E)$ \cite[III.5, Proposition 5.1]{Silverman2009Arithmetic}. In characteristic $0$, the character
\[
  \Aut(E,0)\to \bG_m(K),
  \quad \alpha\mapsto \alpha^*|_{H^0(E,\omega_E)},
\]
is faithful \cite[Chapter III, \S5, Corollary 5.6]{Silverman2009Arithmetic}. Hence an element of $G$ lies in $J$ precisely when its image in $\overline G$ is a translation.

Let $T\subset\overline G$ be the subgroup consisting of translations. Since $\tau$ acts trivially on $\bE_+$, there is an exact sequence
\begin{equation}
\label{equation-J}
    1\to\langle\tau\rangle
  \to J
  \to T
  \to1.
\end{equation}
Every element of $T$ preserves the isomorphism class of $L$. Therefore
\[
  T\subset\ker\bigl(\varphi_L:E\to\Pic^0(E)\bigr),
  \quad
  \varphi_L(a)=t_a^*L\otimes L^{-1}.
\]
For a degree $5$ line bundle on an elliptic curve, $\ker(\varphi_L)=E[5]\cong C_5\times C_5$; see \cite[III.3, Proposition 3.4]{Silverman2009Arithmetic}. Thus $T\subset E[5]$.

A nontrivial translation acts freely on $E$. Since $T$ preserves the reduced branch divisor $B$ of degree $10$, the support of $B$ is a disjoint union of $T$-orbits. Hence $|T|$ divides $10$. As $T\subset E[5]$, it follows that
\[
  T=1\quad\text{or}\quad T\cong C_5.
\]
So $J\cong C_2$ or $C_{10}$.
\end{proof}

Set
\begin{equation}
    V := \H^0(E,\omega_E\otimes L),
    \quad \dim_K V = 5,
\end{equation}
equipped with the $J$-action obtained by restriction.

\begin{lemm}\label{lem:regular}
Suppose $J\simeq C_{10}$, and let $P\simeq C_5$ denote its unique subgroup of order $5$. Then the restricted representation $V|_P$ is the regular representation of $P$. In particular,
\[
  \dim_K V^P = 1.
\]
\end{lemm}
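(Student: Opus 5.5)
We identify how $P$ acts on $V=\H^0(E,\omega_E\otimes L)\simeq \H^0(E,L)$ via the isomorphism $\omega_E\simeq\cO_E$. By the proof of Lemma~\ref{lem:kernel-J}, $P$ maps isomorphically onto the translation subgroup $T\cong C_5\subset E[5]$, since $\tau$ has order $2$ and $P$ has order $5$. So a generator $g\in P$ lifts a translation $t_a$ with $a\in E[5]$ of exact order $5$. Every element of $J$ acts trivially on $\bE_+=\H^0(E,\omega_E)$. Hence, under the splitting $\pi_*\omega_C\simeq\omega_E\oplus(\omega_E\otimes L)$, the action of $g$ on the anti-invariant summand is a linearization $\phi\colon t_a^*L\xrightarrow{\sim}L$ of the translation on $L$. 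This linearization is induced by the action of $g$ on $L^{-1}\subset\pi_*\cO_C$, and it satisfies $\phi^{(5)}=\id$ because $g^5=1$. The question therefore reduces to computing the trace of the action of such a linearized translation on $\H^0(E,L)$.

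The key step is a character computation. Let $\mathcal H_L$ be the Heisenberg (theta) group of $L$, of order $125\cdot|\bG_m|$, or its finite version. It is a central extension of $\ker\varphi_L=E[5]$ by $\bG_m$, and $\H^0(E,L)$ is its unique irreducible weight-one representation, of dimension $5$. This is the Schrödinger representation of Mumford's theory of theta groups. Choose symplectic coordinates on $E[5]=\langle a\rangle\times\langle b\rangle$. In the standard model $V=\mathrm{Fun}(\bZ/5)$, a lift of $t_a$ acts as a cyclic shift times a scalar. More invariantly, any lift $\tilde g$ of a nonzero element of $E[5]$ has trace $0$ on $V$. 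This holds because $\tilde g$ fails to commute with a lift $\tilde h$ of some $h$ with $e_5(a,h)=\zeta\neq 1$, so $\tilde h\tilde g\tilde h^{-1}=\zeta\tilde g$, which forces $\mathrm{tr}(\tilde g)=\zeta\,\mathrm{tr}(\tilde g)$ and hence $\mathrm{tr}(\tilde g)=0$.

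Here $g$ acts as such a lift of $a$, so $\mathrm{tr}(g^i|_V)=0$ for $i=1,\dots,4$. Together with $\mathrm{tr}(1)=5$, this gives the character of the regular representation of $C_5$. Therefore $V|_P$ is regular and $\dim V^P=\frac15\sum_i\mathrm{tr}(g^i)=1$. The powers $g^i$ correspond to $ia\neq 0$, so the same argument applies to each of them.

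The main obstacle is justifying rigorously that the action of $g$ on $V$ really comes from a theta-group element acting on $\H^0(E,L)$. This requires the action to be compatible with the twist by $\omega_E$, which is fine since $J$ acts trivially on $\H^0(E,\omega_E)$ and translations act trivially on $\omega_E$. It also requires $\H^0(E,L)$ to be irreducible for $\mathcal H_L$, that is, the nondegeneracy of the commutator pairing on $\ker\varphi_L=E[5]$, which is given by the Weil pairing. If that citation is awkward, a fallback is an elementary argument. If $v\in V$ is an eigenvector of $g$ with eigenvalue $\lambda$, then the conjugation relation shows that $\tilde h v$ is an eigenvector with eigenvalue $\zeta\lambda$. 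Thus the eigenvalues of $g$ form a full coset $\lambda\mu_5$. Since $g^5=1$, that coset is exactly $\mu_5$, each eigenvalue occurring once because $\dim V=5$.
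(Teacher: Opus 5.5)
Your argument is correct, but it takes a different route from the paper.

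\textbf{The paper's route.} The paper descends along the étale quotient $r\colon E\to F=E/P$. The $P$-linearization of $L$ coming from the action on $C$ gives $L\simeq r^*M$ with $\deg M=1$, hence $\omega_E\otimes L\simeq r^*(\omega_F\otimes M)$. The projection formula and the eigensheaf splitting $r_*\cO_E\simeq\bigoplus_\psi N_\psi^{-1}$, with each $N_\psi$ of degree $0$, then give
\[
V\simeq\bigoplus_\psi \H^0(F,\omega_F\otimes M\otimes N_\psi^{-1}).
\]
Each summand is a line by Riemann--Roch on $F$, so the isotypic decomposition appears explicitly.

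\textbf{Your route.} You stay on $E$ and use Mumford's theta group. The generator $g$ acts on $V$ through a theta-group element over a nonzero point of $K(L)=E[5]$. Conjugating by an element over a point that pairs nontrivially with it rescales $g$ by a primitive fifth root of unity. This kills the trace, or, in your fallback, moves the eigenvalues transitively through $\mu_5$.

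\textbf{What each buys.} Your argument is pointwise in $g$ and needs neither descent nor the structure of $r_*\cO_E$. The price is importing the nondegeneracy of the commutator pairing $e^{L}$ on $K(L)$. The paper uses only descent and Riemann--Roch, and it identifies the eigenspaces concretely.

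\textbf{Two small remarks.} First, irreducibility of $V$ under the theta group is not needed for your trace or eigenvalue argument. You only need the central $\bG_m$ to act by scalars with weight one and the pairing to be nondegenerate. Second, you can work directly with the theta group of the degree-$5$ bundle $\omega_E\otimes L$, since the action of $g$ on $V$ is already given by such an element. That makes your compatibility discussion about twisting by $\omega_E$ unnecessary.
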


\begin{proof}
The group $P$ maps isomorphically to $T$ and therefore acts on $E$ by translations. Let
\[
  r:E\to F:=E/P
\]
be the quotient, so $r$ is a finite \'etale map of degree $5$. The action of $P\subset G$ on $C$ commutes with $\tau$. Hence it preserves the decomposition $\pi_*\mathcal O_C\cong \omega_E\oplus L^{-1}$, giving a $P$-action on $L$. By $fpqc$ descent along the $P$-torsor $r$, there is a line bundle $M$ on $F$ such that
\[
  r^*M\simeq L.
\]
Since $\deg r=5$ and $\deg L=5$, one has $\deg M=1$. Because $r$ is etale,
\[
  \omega_E\simeq r^*\omega_F,
\]
so
\[
  \omega_E\otimes L\simeq r^*(\omega_F\otimes M).
\]
Over the algebraically closed field $K$, the eigensheaf decomposition of the cyclic etale cover is
\[
  r_*\mathcal O_E\simeq
  \bigoplus_{\psi\in\widehat P}N_\psi^{-1},
\]
where each $N_\psi$ has degree $0$, and the summand indexed by $\psi$ is the $\psi$-eigensheaf, up to replacing all characters by their inverses. Using projection formula one has
\[
  V\simeq
  \bigoplus_{\psi\in\widehat P}
  \H^0\bigl(F,\omega_F\otimes M\otimes N_\psi^{-1}\bigr).
\]
Every line bundle $\omega_F\otimes M\otimes N_{\psi}^{-1}$ has degree $1$ on the elliptic curve $F$, and hence has a one-dimensional space of global sections. Thus every character of $P$ occurs exactly once in $V$. This is the regular representation, and the trivial character occurs once.
\end{proof}

\begin{prop}\label{prop:neutralness-of-J}
The representation
\[
  j:J\to\mathrm{GL}(V)
\]
is neutral.
\end{prop}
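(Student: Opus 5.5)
We must show that $j:J\to\GL(V)$ is neutral, using the normalizer gate (Proposition~\ref{prop:normalizer}) together with Corollary~\ref{coro:special-normalizer}. By Lemma~\ref{lem:kernel-J}, $J$ is $C_2$ or $C_{10}$, and in either case $J$ is cyclic. The plan is to decompose $V$ into isotypic components for $J$ and compute the normalizer $\mathfrak N$ of $\mathfrak G=j(J)$ in $\GL(V)$ explicitly.

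First I decompose $V$. In both cases $\tau$ acts on $V$ by $-1$, since $V$ is the anti-invariant part.

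If $J=\langle\tau\rangle\cong C_2$, then $j(J)=\{\pm 1\}$ is central. Hence $\mathfrak N=\GL_5$ and $\mathfrak N/\mathfrak G=\GL_5/\mu_2$. This quotient is not obviously special, so I would instead apply Theorem~\ref{thm:coh-crit} directly. The key observation is that $\GL_5/\mu_2\cong\GL_5$ via $A\mapsto \det(A)^{2}A$ (up to a suitable twist), because $5$ is odd. Concretely, the map $\GL_5\to\GL_5$, $A\mapsto \det(A)\,A^{2}$... rather, the cleaner choice is $A\mapsto \det(A)^{-2}A^{5}$-type maps, and I would look for a homomorphism with kernel exactly $\mu_2$ and target $\GL_5$. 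Once such a map exists, $\H^1(k',\GL_5/\mu_2)$ is trivial by Hilbert 90, so surjectivity is automatic. Equivalently, the quotient is special.

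If $J\cong C_{10}$, write $J=\langle\tau\rangle\times P$. By Lemma~\ref{lem:regular}, $V|_P$ is the regular representation of $P$. Since $\tau$ acts by $-1$, $V$ is the direct sum of the five distinct characters of $C_{10}$ that are nontrivial on $\tau$, each occurring with multiplicity one. For an abelian subgroup acting with multiplicity-free distinct characters, the centralizer is the diagonal torus $T\cong\bG_m^5$ in the eigenbasis. The normalizer is then $T\rtimes W$, where $W$ is the group of permutations of the five eigenlines induced by automorphisms of $J$ preserving the character set. The relevant automorphisms fix $\tau$, so they form $\Aut(P)\cong C_4$, acting on $\widehat P$ as multiplication by $(\bZ/5)^\times$. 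Thus $\mathfrak N/\mathfrak G\cong (T/\mu_{10})\rtimes C_4$, with $T/\mu_{10}\cong\bG_m^5$ a split torus. Over $k$ everything must be taken with its Galois structure, since the eigenlines are defined over $k(\zeta_{10})$.

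The main obstacle is that this quotient is not special: it has the finite quotient $C_4$. So I would apply Theorem~\ref{thm:coh-crit} directly and show that $\H^1(k',\mathfrak N)\to\H^1(k',\mathfrak N/\mathfrak G)$ is surjective. The key structural fact is that $C_4$ permutes the four nontrivial characters of $P$ simply transitively and fixes the trivial character. Hence $\mathfrak N$ is a product of an induced torus, $\bG_m\times\mathrm{R}(\bG_m)$ twisted by $C_4$, with the finite part lifting. The finite part lifts because the permutation matrices realize a complement $C_4\subset\mathfrak N$ mapping isomorphically to $W$. Given a class in $\H^1$ of the quotient, I would push it to $\H^1(k',C_4)$, lift it via the permutation complement, and twist. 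The fibre over it is then a quotient of $\H^1$ of a twisted quasi-trivial torus, meaning a product of Weil restrictions. Such $\H^1$ vanishes by Shapiro's lemma and Hilbert 90, and this gives surjectivity.

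The delicate point is handling the Galois twist when $\zeta_5\notin k'$. I would address it by working with $\mathfrak G$ as a twisted form of $\mu_{10}$, which is Galois-stable, and checking that the torus $T$ remains quasi-trivial: it is the Weil restriction attached to the Galois set of characters.
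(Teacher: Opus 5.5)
Your proposal has a genuine gap in the $C_{10}$ case, and a false claim in the $C_2$ case.

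\textbf{The case $J\cong C_{10}$.} Your set-up is correct: $V$ is the sum of the five characters of $J$ that take the value $-1$ on $\tau$, each occurring once; the centralizer is a quasi-trivial torus $T$; and the normalizer is $T\rtimes C_4$ with a permutation complement. The decisive step, however, is missing. The fibre of $\H^1(F,\mathfrak N/\mathfrak G)\to\H^1(F,C_4)$ over $\gamma$ is a quotient of $\H^1\bigl(F,{}_\gamma(T/\mathfrak G)\bigr)$, not of $\H^1(F,{}_\gamma T)$. Quasi-triviality of $T$, which is all you check, says nothing about $T/\mathfrak G$. The identification $T/\mathfrak G\cong\bG_m^5$ holds only over $K$ and forgets the $C_4$-action on characters.

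This is exactly where neutrality can fail. Your argument runs word for word for $\langle\mathrm{diag}(i,-i)\rangle\subset\GL_2$: two distinct characters, centralizer $\bG_m^2$, normalizer $\bG_m^2\rtimes C_2$ with a permutation complement. The paper records that subgroup as \emph{not} neutral. There $X(T/\mathfrak G)=\{(n_1,n_2): n_1\equiv n_2 \bmod 4\}\cong\bZ\oplus\bZ_{-}$, with $\bZ_{-}$ the sign lattice, is not a permutation lattice.

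What you need is that $X(T/\mathfrak G)$ is a permutation $C_4$-lattice here, and it is. Identify $\widehat J=\bZ/10$, so the characters in $V$ are $X=\{1,3,5,7,9\}$ and $W=(\bZ/10)^\times$ acts on them by multiplication. Then $X(T/\mathfrak G)=\{n\in\bZ^X:\sum_m m\,n_m\equiv 0 \bmod 10\}$. The $W$-equivariant map $e_5\mapsto 2e_5$, $e_w\mapsto w\cdot(e_5+e_1-e_3+e_7)$ lands in this lattice and has determinant $10$. Since the lattice has index $10$ in $\bZ^X$, the map is an isomorphism from $\bZ^X$.

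Galois and your twisting cocycles act on these lattices through $W$. Hence every twist of $T/\mathfrak G$ is quasi-trivial, and your Shapiro--Hilbert~90 step then goes through. This lattice computation is where $\dim V=5$ and $\dim V^P=1$ actually matter. The paper avoids it by quoting \cite[Corollary~6.10]{BVY26}, which for cyclic $J$ reduces neutrality to $\dim V-\dim V^{C_p}\not\equiv 0 \pmod p$; this is checked via $V^{\tau}=0$ and Lemma~\ref{lem:regular}.

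\textbf{The case $J=C_2$.} The isomorphism $\GL_5/\mu_2\cong\GL_5$ you are looking for does not exist. Any surjective homomorphism $\GL_5\to\GL_5$ has the form $A\mapsto\det(A)^m\alpha(A)$ with $\alpha$ an automorphism. Its kernel is the scalar group $\mu_{|5m\pm1|}$, which is never $\mu_2$.

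The conclusion you want is still true, by another route. The obstruction to lifting a class of $\H^1(F,\GL_5/\mu_2)$ to $\H^1(F,\GL_5)=1$ lies in $\H^2(F,\mu_2)$, which injects into $\mathrm{Br}(F)$. Its image there is the Brauer class of a degree-$5$ central simple algebra. So it is killed by both $2$ and $5$, hence is zero. Thus $\GL_5/\mu_2$ is special, and Corollary~\ref{coro:special-normalizer} applies.
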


\begin{proof}
By Lemma~\ref{lem:kernel-J}, the group $J$ is cyclic. We now invoke \cite[Corollary 6.10]{BVY26}: let $C_p\subset J$ denote the unique subgroup of order $p$. Then the representation is neutral as long as 
\[
  \dim_K V - \dim_K V^{C_p} \not\equiv 0 \pmod p,
\]
for each prime $p$ dividing $|G|$.
For $p=2$, the subgroup $C_2$ is generated by $\tau$. Since $\tau$ acts as $-id$ on $V$, 
\[
  V^{C_2}=0,
  \quad
  \dim_KV-\dim_KV^{C_2}=5\not\equiv0\pmod2.
\]
If $J=C_2$, this proves the proposition.

Suppose $J=C_{10}$. For its order $5$ subgroup $P$, Lemma~\ref{lem:regular} gives
\[
  \dim_KV^P=1.
\]
Therefore
\[
  \dim_KV-\dim_KV^P=4\not\equiv0\pmod5.
\]
So the above condition holds for both prime divisors of $10$, the representation is neutral.
\end{proof}

\begin{coro}
    Let $C$ be a bielliptic curve over $K$, it descends to its field of moduli.
\end{coro}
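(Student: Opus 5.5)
The plan is to assemble the reduction already set up in this subsection. We base change to $k=k_C$ and let $\cG=\cG_C$. Section~\ref{sect:bielliptic-gerbe-construction} gives the faithful morphism $\Phi:\cG\to B(\bG_m\times\GL_5)$ of Proposition~\ref{prop:faithful-hodge}. We compose $\Phi$ with the projection to $B\bG_m$ and take the canonical factorization $\cG\to\cH\to B\bG_m$ from Lemma~\ref{canonical-factorization}. By \cite[Proposition 5.1]{neutral-representation-dim-leq3}, $\cH$ is neutral, so we may fix $\Spec k\to\cH$. We then form $\cG_0=\Spec k\times_\cH\cG$, a gerbe over $k$ equipped with a map $\cG_0\to\cG$. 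It therefore suffices to produce a $k$-point of $\cG_0$.

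\textbf{Step 1.} We check that $\cG_0$ really is a gerbe over $k$ whose geometric fundamental group is $J=\ker\chi_+$. This holds because $\cG\to\cH$ is locally full, so on inertia it is the surjection $G\to\chi_+(G)$, and the fibre over a neutral point has inertia equal to the kernel. Next we check that the composite $\cG_0\to\cG\to B\GL_5$, built from $\bE_-$, is faithful. On inertia it is $\rho_-|_J$. If $u\in J$ acts trivially on $\bE_-|_C$, then it also acts trivially on $\bE_+|_C$, since $\chi_+(u)=1$. Faithfulness of the canonical representation \cite[Theorem 3.2]{K_ck_2015} then forces $u=1$. Hence $\cG_0\to B\GL_5$ is a $j$-gerbe in the sense of Definition~\ref{def:ggerbe}, where $j:J\hookrightarrow\GL(V)$ and $V=\H^0(E,\omega_E\otimes L)$. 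One should also note that $j(J)$ is Galois stable up to conjugacy, which holds because $\cG_0$ is defined over $k$.

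\textbf{Step 2.} We invoke Proposition~\ref{prop:neutralness-of-J}, which combines Lemmas~\ref{lem:kernel-J} and \ref{lem:regular} with \cite[Corollary 6.10]{BVY26}. It says that $j$ is a neutral subgroup of $\GL_5(K)$. By the definition of neutrality, every $j$-gerbe over $k$ has a $k$-point, so $\cG_0(k)\neq\emptyset$. Composing this point with $\cG_0\to\cG$ gives $\cG(k)\neq\emptyset$. By the dictionary recalled in Section~\ref{sect:preliminary}, $C$ therefore admits a model over $k_C$.

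\textbf{Main obstacle.} The delicate point is Step 1, namely that the base change $\cG_0$ is again a gerbe over $k$ with the stated inertia, rather than something empty or disconnected. I would argue this via the locally full property. Because $\cH$ is a gerbe whose inertia is the image $\chi_+(G)$, the map $\cG\to\cH$ is an fppf-locally surjective morphism of gerbes. Its fibre over a $k$-point is therefore a gerbe banded by the kernel, and this can be checked after passing to $K$, where everything becomes $BG\to B\chi_+(G)$ with fibre $BJ$. The rest is a direct application of the results already established.
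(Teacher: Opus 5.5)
Your proposal is correct and follows the same route as the paper. You factor the $B\bG_m$-component through the neutral gerbe $\cH$, pass to the fibre $\cG_0=\Spec k\times_{\cH}\cG$ with fundamental group $J=\ker\chi_+$, and apply Proposition~\ref{prop:neutralness-of-J} to the $j$-gerbe $\cG_0\to B\GL_5$; your Step~1 only spells out the faithfulness of $\rho_-|_J$ and the gerbe structure of $\cG_0$, which the paper asserts without detail.
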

\begin{proof}
    By \ref{prop:neutralness-of-J}, the $j$-gerbe $\cG_0\ra B\GL_5$ is neutral, hence $\cG=\cG_C$ is neutral.
\end{proof}

\subsection{Smooth del Pezzo surface} Next, suppose $C$ lies on a smooth del Pezzo surface of degree $5$. By Proposition~\ref{prop:del-pezzo-marking-morphism}, there is a faithful morphism $\cG\ra BS_5$ over $k$. Using the classification in Theorem~\ref{thm:neutral-s5}, we conclude
\begin{prop}
    Suppose the genus $6$ curve $C$ over $K$ lies on a smooth del Pezzo surface. Then it descends to its field of moduli unless either $\Aut(C)\cong C_2$, or $\Aut(C)\cong D_{10}$ and $\sqrt{-1}\notin k$.
\end{prop}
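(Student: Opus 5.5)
We prove the statement by combining the faithful morphism $\rho\colon\cG\to BS_5$ of Proposition~\ref{prop:del-pezzo-marking-morphism} with the classification in Theorem~\ref{thm:neutral-s5}. Throughout we work over $k=k_C$. By Proposition~\ref{prop:intrinsic-del-pezzo} we have $\Aut(C)=\Aut(S_C,C)$, so $\rho$ sends the inertia $\Aut(C)$ of the geometric point $C$ injectively into $S_5(K)=S_5$. This embedding is simply the action of $\Aut(S_C,C)\subset\Aut(S_C)\simeq S_5$ on the Petersen graph of $(-1)$-curves. Let $j\colon\Aut(C)\hookrightarrow S_5$ be this embedding. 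Since $S_5$ is a constant group scheme, the Galois action on $S_5(K)$ is trivial, so every subgroup is automatically Galois stable. Hence $\rho$ is a $j$-gerbe in the sense of Definition~\ref{def:ggerbe}, where the image subgroup is determined up to conjugacy.

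First, if $j(\Aut(C))$ is a neutral subgroup of $S_5$, then by the definition of neutrality $\cG(k)\neq\emptyset$, so $C$ descends to $k_C$. By Theorem~\ref{thm:neutral-s5}, the subgroup $j(\Aut(C))$ fails to be neutral only in two cases. The first is when it is conjugate to $C_2^d=\langle(12)(34)\rangle$, which forces $\Aut(C)\cong C_2$. The second is when it is conjugate to the transitive $D_{10}$ and $\sqrt{-1}\notin k$, which forces $\Aut(C)\cong D_{10}$. In every other case $C$ descends. This is exactly the contrapositive of the proposition.

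The only point that needs care is the identification of the embedding. Neutrality depends on the conjugacy class of $j(\Aut(C))$ in $S_5$, not only on its isomorphism type. This is harmless here because we only conclude something about the abstract isomorphism type in the non-neutral cases. For example, an abstract $C_2$ embedded as a transposition is neutral and so causes no problem. The same holds for an abstract $D_{10}$ in $S_5$: every subgroup of order $10$ is conjugate to the transitive one, and it is neutral whenever $\sqrt{-1}\in k$. We must also check that the condition $\sqrt{-1}\notin k$ refers to the base field over which $\rho$ is defined, which is $k_C$. This matches the hypothesis after the base change to $k=k_C$ made at the start of Section~\ref{sect:proof}. With these two points settled, the argument is a direct application of earlier results, and we expect no step to be a genuine obstacle.
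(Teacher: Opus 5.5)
Your proposal is correct and is the paper's argument: the faithful morphism $\cG\to BS_5$ of Proposition~\ref{prop:del-pezzo-marking-morphism} makes $\cG$ a $j$-gerbe for $\Aut(C)\subset S_5$. Theorem~\ref{thm:neutral-s5} then leaves only conjugates of $\langle(12)(34)\rangle$, and of the transitive $D_{10}$ when $\sqrt{-1}\notin k$, as obstructions; your extra checks (that $S_5$ is constant so every subgroup is Galois stable, that all order-$10$ subgroups of $S_5$ are conjugate, and that $k$ means $k_C$ after base change) just make explicit what the paper leaves implicit.
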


\subsection{Singular del Pezzo surface}

We first record the group-theoretic input needed in the singular case.

\subsubsection{Finite subgroups of split connected solvable groups}

We briefly recall the terminology. A linear algebraic group $\Gamma$ is \emph{solvable} if its derived series, obtained by repeatedly taking commutator subgroups, eventually becomes trivial. A linear algebraic group $U$ is \emph{unipotent} if, after base change to an algebraic closure and in a faithful linear representation, every element of $U$ is unipotent, or equivalently has all eigenvalues equal to $1$. A torus $T$ over $k$ is \emph{split} if $T\simeq\bG_m^r$ over $k$. A unipotent group $U$ over $k$ is \emph{split} if it has a filtration by normal $k$-subgroups whose successive quotients are isomorphic to $\bG_a$. In the situation below, saying that the connected solvable group $\Gamma$ is split means that its unipotent radical $U$ and a maximal torus $T$ are split over $k$, so that $\Gamma=U\rtimes T$ over $k$.

\begin{prop}
\label{prop:connected-split-solvable-neutral}
Let $\Gamma=U\rtimes T$ be a split connected solvable group over a field of characteristic $0$, where $U$ is split unipotent and $T$ is a split torus. Then every finite subgroup of $\Gamma(K)$ is neutral.
\end{prop}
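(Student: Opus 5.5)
Let $G\subset\Gamma(K)$ be a finite subgroup and $\cG\ra B\Gamma$ a $j$-gerbe over $k'/k$. The plan is to show directly that $\cG$ is neutral, by reducing along the unipotent radical and then using the torus. Two standard facts drive the argument. First, $\H^1(F,\bG_a)=0$ and $\H^1(F,\bG_m)=0$ for every field $F$ (additive Hilbert 90 and Hilbert 90). Hence a split unipotent group and a split torus are special, as one sees by dévissage along the filtration with $\bG_a$ quotients. Second, in characteristic $0$ a unipotent group has no nontrivial finite subgroups, because $U(K)$ is torsion free. So $G\cap U(K)=1$, and the projection $\Gamma\ra T$ maps $G$ isomorphically onto a finite subgroup $\bar G\subset T(K)$.

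The first step is to pass from $\Gamma$ to $T$. Since $U$ is normal in $\Gamma$, consider the composite $\cG\ra B\Gamma\ra BT$ and its canonical factorization $\cG\ra\cH\ra BT$ from Lemma~\ref{canonical-factorization}. The inertia of $\cH$ is the image of $G$ in $T$, namely $\bar G$. Since $\cG\ra\cH$ is locally full and induces the isomorphism $G\simeq\bar G$ on geometric inertia, it is an equivalence, so $\cG\ra BT$ is already faithful. Neutrality of $\cG$ therefore reduces to neutrality of the faithful $T$-gerbe $\cG\ra BT$ with finite inertia $\bar G$. Equivalently, any finite subgroup of $\Gamma(K)$ maps isomorphically to a finite subgroup of $T(K)$, and it suffices to treat split tori.

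The second step handles a split torus $T=\bG_m^r$. A finite subgroup $\bar G\subset T(K)$ is diagonalizable, and by the Galois-invariance assumption it descends to a finite multiplicative subgroup scheme $\mathfrak G\subset T$ over $k$; in fact $\mathfrak G=\bigcap\ker\chi$ over the characters of $T$ vanishing on $\bar G$. Since $T$ is commutative, its normalizer is all of $T$. By Proposition~\ref{prop:normalizer}, $T$ is a gate, and $T/\mathfrak G$ is again a split torus, because the quotient of a split torus by a multiplicative subgroup has character group the sublattice of characters killing $\mathfrak G$. Split tori are special, so Corollary~\ref{coro:special-normalizer} gives neutrality. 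Alternatively, $\H^1(k',T)\ra\H^1(k',T/\mathfrak G)$ is surjective because the target vanishes, and Theorem~\ref{thm:coh-crit} applies.

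The main obstacle is the first step: checking carefully that the composite $\cG\ra BT$ stays faithful and that the reduction is legitimate at the level of gerbes, not only on $K$-points. If the canonical factorization argument turns out to be delicate, I would argue through gates instead. A Levi-type conjugacy in characteristic $0$ places $G$, up to $U(K)$-conjugacy, inside a maximal torus. Since all maximal tori of the split connected solvable group are $U$-conjugate and $\H^1(k',U)=0$, the torsor of conjugations is trivial, so $T$ is a gate for $G$, and the argument of the second step then finishes the proof.
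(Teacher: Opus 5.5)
Your argument is correct, and it follows a different and lighter route than the paper's.

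\textbf{The paper's route.} It first conjugates $G$ by an element of $U(K)$ into $T(K)$, using conjugacy of Levi subgroups (Conrad). It then computes the normalizer $N_\Gamma(G)=U^G\rtimes T$. Next it shows that $U^G$ is connected and split unipotent, via the exponential map and Springer. Finally it invokes Brion--Peyre to conclude that $N_\Gamma(G)/G=U^G\rtimes(T/G)$ is special.

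\textbf{Your route.} You push the $j$-gerbe forward along the $k$-homomorphism $\Gamma\ra\Gamma/U\simeq T$. Because $U(K)$ is torsion free in characteristic $0$, the map $G\ra T(K)$ is injective. So $\cG\ra B\Gamma\ra BT$ is a $\bar j$-gerbe for $\bar G\subset T(K)$. Then $T$ is its own normalizer and $T/\mathfrak G$ is a split torus, so Hilbert~90 and Theorem~\ref{thm:coh-crit} finish the proof.

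\textbf{Two remarks on your write-up.}
\begin{itemize}
\item The ``main obstacle'' you flag is not an obstacle. The geometric inertia is the finite \'etale group $G$, so faithfulness of the composite follows directly from injectivity of $G\ra T(K)$. You do not even need the canonical factorization.
\item The fallback paragraph can be dropped. As written it is also incomplete: showing that $T$ is a gate requires trivializing, over the gerbe, a torsor under a twisted form of $C_U(G)$. The vanishing of $\H^1(k',U)$ alone does not give this.
\end{itemize}

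\textbf{Comparison.}
\begin{itemize}
\item Your approach avoids Levi conjugacy, the analysis of $C_U(G)$, and the Brion--Peyre theorem. It shows more generally that neutrality can be checked after any homomorphism that is injective on $G$.
\item The same projection trick would simplify the paper's $A_2$ case. Projecting onto $D=\bG_m\times C_2$ removes the need for the $V^G$-cohomology bijections.
\item The paper's approach keeps the argument inside $\Gamma$. It gives an explicit gate and an explicit description of the normalizer quotient, and its conjugation step is reused in the $A_2$ case.
\end{itemize}
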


\begin{proof}
Let $G\subset\Gamma(K)$ be finite. Its projection to $T(K)$ is injective, because a unipotent group over a field of characteristic $0$ has no nontrivial finite subgroups. Furthermore, $G$ is conjugate by some element of $U(K)$ to a subgroup of $T(K)$: indeed, let $H$ denote the image of $G$ in $T(K)$. Then both $H\subset U(K)\rtimes H$ and $G$ are Levi subgroups of $\Gamma(K)$, so one may invoke \cite[Proposition 5.4.1]{Conrad2014ReductiveGroupSchemes}.

Thus we may assume that $G\subset T(K)$, since by definition $G$ is neutral if and only if one of its $\Gamma(K)$-conjugates is neutral. Also, since $G$ is a finite subgroup of a torus, it must be Galois invariant, hence its normalizer is a gate; see \ref{prop:normalizer}. 

Because $T$ is abelian, an element of $\Gamma$ normalizes $G$ precisely when it centralizes $G$. Consequently,
$$
    N_\Gamma(G)=U^G\rtimes T.
$$
Because $G$ acts on $U$ by conjugation, the fixed-point subgroup $U^G = C_U(G)$ is a closed unipotent subgroup of $U$. In characteristic $0$, the exponential map gives an isomorphism, as varieties, between $U^G$ and the $G$-invariant subspace $\operatorname{Lie}(U)^G$. Hence $U^G$ is smooth and connected, and $U^G$ is split by \cite[Corollary 14.3.10]{SpringerLAG}.

Consequently,
$$
    N_\Gamma(G)/G = U^G \rtimes (T/G)
$$
is a split, connected, solvable group. By \cite[Theorem 2]{BrionPeyre}, any group of this type is special. Corollary~\ref{coro:special-normalizer} then shows that $G$ is neutral.
\end{proof}

\subsubsection{The five descent types}

Let $S_\tau$ be the split quintic del Pezzo surface of type $\tau$ chosen in Proposition~\ref{prop:singular-del-pezzo-marking-morphism}. The automorphism groups needed here are
$$
\begin{array}{c|c}
\tau & \Aut(S_\tau)\\
\hline
A_4 & U_3\rtimes\bG_m\\
A_3 & \bG_a^2\rtimes\bG_m\\
A_2+A_1 & B_2\times\bG_m\\
A_2 & B_2\rtimes C_2\\
A_1 & \bG_m\times D_6.
\end{array}
$$
where $U_3$ is the maximal unipotent subgroup of $\PGL_3$ and $B_2=\bG_a\rtimes\bG_m$ is the Borel subgroup of $\PGL_2$. Here $D_6\simeq S_3$ is denoted by $D_3$ in \cite{Virin2024}. These descriptions are given in \cite[Table~1, cases $36^\circ$--$39^\circ$ and $41^\circ$]{Virin2024}.

\begin{theo}
\label{thm:neutral-singular-del-pezzo}
If $\tau$ is $A_4$, $A_3$, $A_2+A_1$, $A_2$, or $A_1$, then every finite subgroup of $\Aut(S_\tau)(K)$ is neutral.
\end{theo}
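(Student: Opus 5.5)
We split into the connected solvable cases and the two disconnected cases. For $\tau=A_4,A_3,A_2+A_1$ the groups $U_3\rtimes\bG_m$, $\bG_a^2\rtimes\bG_m$ and $B_2\times\bG_m=(\bG_a\rtimes\bG_m)\times\bG_m$ are connected solvable. Over $k$ they are split: the unipotent radicals $U_3$, $\bG_a^2$, $\bG_a$ are split, and the tori $\bG_m$, $\bG_m^2$ are split, because $S_\tau$ was chosen as a split model. So Proposition~\ref{prop:connected-split-solvable-neutral} applies directly and gives neutrality of every finite subgroup. The only thing to verify is that the descriptions in \cite{Virin2024} hold over $k$ for our split $S_\tau$, and not only over $K$.

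For $\tau=A_1$, $\Gamma=\bG_m\times D_6$, with $D_6\simeq S_3$ a constant group. Let $G\subset\Gamma(K)$ be finite. We will compute its normalizer $N$, which is a gate by Proposition~\ref{prop:normalizer}, and then apply Corollary~\ref{coro:special-normalizer} or Theorem~\ref{thm:coh-crit}. Write $G_0=G\cap\bG_m=\mu_n$ and let $\bar G\subset S_3$ be the image of $G$. Since $\bG_m$ is central, $N=\bG_m\times N_{S_3}$-type data: explicitly, $(t,s)$ normalizes $G$ iff $s$ normalizes $\bar G$ and the induced map is compatible with the graph of $G$. We then treat the cases $\bar G\in\{1,C_2,C_3,S_3\}$.
\begin{itemize}
\item If $\bar G=1$, then $N/G=\bG_m/\mu_n\times S_3\simeq\bG_m\times S_3$. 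This is not special, so instead we use Theorem~\ref{thm:coh-crit}. The surjectivity holds because $N=\bG_m\times S_3\to\bG_m/\mu_n\times S_3$ is split on the $S_3$ factor and $\H^1(\bG_m)=0=\H^1(\bG_m/\mu_n)$.
\item In general the pattern is the same. The $\bG_m$-part of $N/G$ is a quotient torus with trivial $\H^1$, so we only need that the finite part $N_{S_3}(\bar G)/\bar G$, or the appropriate quotient, lifts to $N$. This holds because $S_3$ is constant and the normalizer extensions in $S_3$ split: $C_3\subset S_3$ has a complement $C_2$, and $C_2$, $S_3$ are self-normalizing. A graph-type $G$ (for example $\bar G=C_2$ embedded diagonally with $\mu_{2m}$) needs a small check that the lift can be chosen inside $N$.
\end{itemize}

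For $\tau=A_2$, $\Gamma=B_2\rtimes C_2$. The identity component $B_2\times$? is connected split solvable. If $G\subset\Gamma^0(K)$, we cannot simply quote Proposition~\ref{prop:connected-split-solvable-neutral}, since neutrality is relative to $\Gamma$. We redo its argument: conjugate $G$ into the maximal torus $T$, which is normalized by the $C_2$ (we expect $C_2$ acts on $B_2$ preserving a split torus; this must be checked from \cite{Virin2024}). Then $N_\Gamma(G)\supseteq U^G\rtimes T$, extended by the elements of $C_2$ normalizing $G$. The quotient $N/G$ is $(U^G\rtimes T/G)\rtimes C_2$ or just the connected part. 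Using Theorem~\ref{thm:coh-crit}, $\H^1$ of the connected special part vanishes, so surjectivity reduces to lifting $\H^1(k',C_2)$. That lift exists because the $C_2$ is a genuine subgroup of $N$ (a splitting). If $G\not\subset\Gamma^0$, we take $G^0=G\cap\Gamma^0$, conjugate it into $T$ compatibly, and argue similarly, the extension by $C_2$ again being split by an element of $G$ itself.

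The main obstacle is the $A_2$ case, specifically showing that the normalizer extension $1\to G\to N\to N/G\to1$ splits after killing the connected special part. This requires an explicit form of the $C_2$-action on $B_2$, which I would first try to realize as $\mathrm{diag}$-type inversion on the torus with a compatible action on $\bG_a$, so that it fixes a split torus and normalizes every subgroup of it.
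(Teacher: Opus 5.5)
\textbf{Overall.} Your treatment of $A_4$, $A_3$, $A_2+A_1$ is the paper's. Your $A_1$ argument is the paper's in outline: write $G$ as the preimage of the graph of a character $\chi\colon H\to T/A$, and compute $N_\Gamma(G)=T\times Q$. The genuine gap is in the $A_2$ case.

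\textbf{The $A_1$ case needs only tightening.} Replace the ``pattern is the same'' bullet by the explicit conclusion. The quotient $N_\Gamma(G)/G$ is the split torus $T/A$ in every case except two:
\begin{enumerate}
\item[(a)] $H=1$, where it is $(T/A)\times D_6$;
\item[(b)] $H=C_3$ with $\chi$ trivial, where it is $(T/A)\times C_2$ and one lifts through the split quotient $D_6\to C_2$.
\end{enumerate}
For $H=C_3$ with $\chi\neq 1$, the reflection does not normalize $G$. So there the $C_2$ is not something to ``lift to $N$''; it simply does not occur.

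\textbf{The gap in the $A_2$ case.} You assert that ``$\H^1$ of the connected special part vanishes, so surjectivity reduces to lifting $\H^1(k',C_2)$.'' This is not valid as stated. Suppose $N_\Gamma(G)/G=M\rtimes C_2$ with $M$ connected. The fibre of $\H^1(F,N_\Gamma(G)/G)\to\H^1(F,C_2)$ over a class $\alpha$ is controlled by $\H^1(F,{}_\alpha M)$, the cohomology of the \emph{twisted} group, not of $M$ itself. Whether this vanishes depends entirely on the $C_2$-action, which you leave undetermined.

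The action you propose to try, inversion on the torus, would break the argument. Then ${}_\alpha(T/G)$ would be a norm-one torus $R^1_{F'/F}\bG_m$. Its $\H^1=F^\times/N(F'^\times)$ is in general nonzero, and for $G=\mu_n$ with $n$ even it is killed by the induced map $x\mapsto x^n$. So surjectivity, and hence neutrality, would fail. In any case that action is impossible: an automorphism of $B_2$ must preserve the weight by which $\bG_m$ acts on $\bG_a$.

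\textbf{The missing input and how the paper closes the case.} The needed fact is \cite[Claim~4.11]{Virin2024}: $\sigma(a,t)=(-a,t)$. So $C_2$ centralizes $\bG_m$ and acts by $-1$ on $\bG_a$, giving $\Gamma=V\rtimes D$ with $V=\bG_a$ and $D=\bG_m\times C_2$ \emph{abelian}. The paper then argues as follows.
\begin{enumerate}
\item[(1)] Conjugate all of $G$, not just $G\cap\Gamma^0$, into $D(K)$.
\item[(2)] Then $N_\Gamma(G)=V^G\rtimes D$ and $N_\Gamma(G)/G=V^G\rtimes(D/G)$.
\item[(3)] The only twists that occur are of the vector group $V^G$, and these have trivial $\H^1$. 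So $\H^1$ of both groups identifies with $\H^1(F,D)$ and $\H^1(F,D/G)$.
\item[(4)] Surjectivity of $\H^1(F,D)\to\H^1(F,D/G)$ follows from the dichotomy on $\epsilon\colon G\to C_2$. If $\epsilon$ is trivial, then $D/G\simeq\bG_m\times C_2$ and the map is the identity on $\H^1(F,C_2)$. If $\epsilon$ is onto, then $D/G\simeq\bG_m$ and $\H^1(F,D/G)=1$.
\end{enumerate}
The last case replaces your treatment of $G\not\subset\Gamma^0$: there is no $C_2$ left in the quotient to split.
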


\begin{proof}
For the types $A_4$, $A_3$, and $A_2+A_1$, the displayed descriptions show that $\Aut(S_\tau)$ is split, connected, and solvable. Hence Proposition~\ref{prop:connected-split-solvable-neutral} applies.

Suppose that $\tau=A_2$. Let $B_2$ denote the standard Borel subgroup of $\PGL_2$, written as
$$
B_2=\bG_a\rtimes\bG_m,
\quad
t\cdot a=ta.
$$
By \cite[Claim~4.11]{Virin2024}, the nontrivial element $\sigma$ of $C_2$ acts on $B_2$ by
$$
\sigma(a,t)=(-a,t).
$$
Consequently, we may write
$$
\Gamma:=\Aut(S_\tau)=V\rtimes D,
\quad
V=\bG_a,
\quad
D=\bG_m\times C_2,
$$
where
$$
(t,\sigma^\epsilon)\cdot a=(-1)^\epsilon ta.
$$

Let $G\subset\Gamma(K)$ be finite. As in the proof of \ref{prop:connected-split-solvable-neutral} we may assume that $G\subset D(K)$. Because $D$ is abelian, an element of $\Gamma$ normalizes $G$ precisely when its $V$-component is fixed by $G$. Hence
$$
N_\Gamma(G)=V^G\rtimes D.
$$
Moreover, $G$ acts trivially on $V^G$, so
$$
N_\Gamma(G)/G=V^G\rtimes(D/G).
$$
For every field extension $F/k$, projection onto the second factor induces bijections
$$
\H^1(F,V^G\rtimes D)\xrightarrow{\sim}\H^1(F,D)
$$
and
$$
\H^1(F,V^G\rtimes(D/G))
\xrightarrow{\sim}
\H^1(F,D/G).
$$
Indeed, the fiber over a given class $\alpha$, meaning the collection of $1$-cocycles lying above $\alpha$, is parametrized by the first cohomology group of the corresponding twist ${}_\alpha V^G$. This twist is either trivial or a vector group, and in either case its first Galois cohomology vanishes. Via these identifications, the map
$$
\H^1(F,N_\Gamma(G))
\to
\H^1(F,N_\Gamma(G)/G)
$$
corresponds to
$$
\H^1(F,D)\to\H^1(F,D/G).
$$

Now consider the projection $\epsilon:G\to C_2$. If $\epsilon$ is the trivial homomorphism, then $G=\mu_n\subset\bG_m$ for some $n$, and we have
$$
D/G\simeq(\bG_m/\mu_n)\times C_2
\simeq\bG_m\times C_2.
$$
Since $\H^1(F,\bG_m)=1$, the map
$$
\H^1(F,D)\to\H^1(F,D/G)
$$
reduces to the identity on $\H^1(F,C_2)$, and hence is surjective.

If instead $\epsilon$ is surjective, then $D/G\cong\bG_m$, so $\H^1(F,D/G)=1$, and once again the induced map on cohomology is surjective. By Proposition~\ref{prop:normalizer} and Theorem~\ref{thm:coh-crit}, we conclude that $G$ is neutral.

It remains to treat $\tau=A_1$. Write
$$
\Gamma=\Aut(S_\tau)=T\times D_6,
\quad
T=\bG_m,
$$
and let $G\subset\Gamma(K)$ be finite. Set $H=\mathrm{im}(G\ra D_6)$ and $A=\ker(G\ra D_6)$. The image of $G$ in $(T/A)\times H$ intersects $T/A$ trivially and projects isomorphically onto $H$. It is therefore the graph of a character
$$
\chi:H\to T/A.
$$
Consequently, $G$ is the inverse image of this graph under
$$
T\times H\to(T/A)\times H.
$$
Put
$$
Q=
\left\{
q\in N_{D_6}(H):
\chi(qhq^{-1})=\chi(h)
\text{ for every }h\in H
\right\}.
$$
Then $N_\Gamma(G)=T\times Q$. Up to conjugacy, the subgroups of $D_6\cong S_3$ are
$$
1,\quad C_2,\quad C_3,\quad D_6.
$$
If $Q=H$, the homomorphism
$$
T\times H\to T/A,
\quad
(t,h)\mapsto(t\bmod A)\chi(h)^{-1},
$$
has kernel $G$ and therefore identifies
$$
N_\Gamma(G)/G\simeq T/A.
$$
Since $T/A$ is a split torus, its first Galois cohomology is trivial.

There are only two cases in which $Q\neq H$. If $H=1$, then
$$
N_\Gamma(G)/G\simeq(T/A)\times D_6,
$$
and the map on first Galois cohomology is the identity on the $D_6$-factor. If $H=C_3$ and $\chi$ is trivial, then
$$
N_\Gamma(G)/G\simeq(T/A)\times C_2,
$$
and the map on finite factors is induced by the split quotient
$$
D_6\to C_2.
$$
It follows that
$$
\H^1(F,N_\Gamma(G))
\to
\H^1(F,N_\Gamma(G)/G)
$$
is surjective for every field extension $F/k$ in all cases. Proposition~\ref{prop:normalizer} and Theorem~\ref{thm:coh-crit} show that $G$ is neutral.
\end{proof}

Finally, assume that $C$ is non-special and lies on a singular quintic del Pezzo surface $S_C$, whose singularity type is $\tau$. Proposition~\ref{prop:singular-del-pezzo-marking-morphism} gives a faithful morphism
$$
    \cG\ra B\Aut(S_\tau),
$$
whose geometric inertia is the embedding
$$
    \Aut(C)=\Aut(S_C,C)\hookrightarrow\Aut(S_\tau)(K).
$$

\begin{prop}
\label{prop:singular-del-pezzo-descent}
If $\tau$ is $A_4$, $A_3$, $A_2+A_1$, $A_2$, or $A_1$, then $C$ descends to its field of moduli.
\end{prop}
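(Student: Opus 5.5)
The proof will be a short assembly of results already established. We base change to $k=k_C$ and write $\cG=\cG_C$. Since $C$ is non-special and $S_C$ is singular of type $\tau$, Proposition~\ref{prop:intrinsic-del-pezzo} identifies $\cG$ with $\cG_{(S_C,C)}$ and gives $\Aut(C)=\Aut(S_C,C)$. Proposition~\ref{prop:singular-del-pezzo-marking-morphism} then provides a faithful morphism $\rho_\tau\colon\cG\ra B\Gamma_\tau$, where $\Gamma_\tau=\Aut(S_\tau)$ for a split model $S_\tau$ over $k$.

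The next step is to see that $\rho_\tau$ is a $j$-gerbe in the sense of Definition~\ref{def:ggerbe}. Take $j$ to be the embedding $G:=\Aut(C)\hookrightarrow\Gamma_\tau(K)$ obtained by choosing an isomorphism $\alpha\colon S_{\tau,K}\simeq S_C$ and conjugating $\Aut(S_C,C)$ into $\Aut(S_{\tau,K})$. A different choice of $\alpha$ changes $j(G)$ only by conjugation in $\Gamma_\tau(K)$. The image of the geometric inertia of $\cG$ is exactly this conjugacy class. The inertia is finite because $C$ has genus $6\geq 2$, so $G$ is a finite subgroup of $\Gamma_\tau(K)$.

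Theorem~\ref{thm:neutral-singular-del-pezzo} says that every finite subgroup of $\Aut(S_\tau)(K)$ is neutral when $\tau\in\{A_4,A_3,A_2+A_1,A_2,A_1\}$. In particular $j(G)$ is neutral. By the definition of neutrality, every $j$-gerbe over $k$ has a $k$-point, so $\cG(k)\neq\emptyset$. Since $\cG$ is the residue gerbe over $k=k_C$, this is exactly the statement that $C$ admits a model over its field of moduli.

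The one point needing care is the Galois-stability hypothesis built into the setup of Section~\ref{sect:neutral-subgroups}, where $j\colon G\hookrightarrow\Gamma(K)$ is assumed stable under the Galois action. This hypothesis is already handled inside the proof of Theorem~\ref{thm:neutral-singular-del-pezzo}: there one conjugates $G$ into the torus part, where finite subgroups are automatically Galois invariant, and neutrality depends only on the conjugacy class. I would also remark that $j$ is determined only up to $\Gamma_\tau(K)$-conjugacy, and that the definition of a $j$-gerbe requires only conjugacy. Hence no further argument is needed and the proposition follows.
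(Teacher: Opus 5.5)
Your argument is the paper's: the faithful morphism $\cG\ra B\Aut(S_\tau)$ of Proposition~\ref{prop:singular-del-pezzo-marking-morphism} has geometric inertia image conjugate to $\Aut(C)=\Aut(S_C,C)\subset\Aut(S_\tau)(K)$, and Theorem~\ref{thm:neutral-singular-del-pezzo} then gives $\cG(k)\neq\emptyset$.

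One correction to your last paragraph concerns your claim that Galois stability is settled inside that theorem's proof. The proof does conjugate $G$ into $T(K)$, or into $D(K)$, for $A_4$, $A_3$, $A_2+A_1$ and $A_2$, but it does not do so for $A_1$. There, for instance, $G=\langle(\zeta_3,h)\rangle\subset\bG_m(K)\times D_6$ with $h$ of order $3$ has no Galois-stable conjugate when $\zeta_3\notin k$. So you should drop that claim and simply invoke the theorem as stated, as the paper does.
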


\begin{proof}
By Theorem~\ref{thm:neutral-singular-del-pezzo}, every finite subgroup of $\Aut(S_\tau)(K)$ is neutral for the five displayed types. So $\cG(k)\neq \emptyset$.
\end{proof}

The last remaining singularity type to consider is $2A_1$. In this case, the automorphism group of the surface is $\bG_m^2\rtimes C_2$, where the nontrivial element of $C_2$ swaps the two tori, and the associated normalizer quotient $N_{\Gamma}(G)/G$ may fail to be special. For example, take $G=\langle (i,-i,id)\rangle\subset \Gamma(K)$. The normalizer of $G$ is the entire group $\Gamma(K)$. On the other hand, $G$ is isomorphic to the subgroup
$$G'=\langle \begin{pmatrix}
    i&0\\
    0&-i
\end{pmatrix}\rangle\subset \GL_2(K),$$
whose normalizer in $\GL_2(K)$ is exactly $\bG_m^2\rtimes C_2$. We have shown that $G'$ is not a neutral subgroup of $\GL_2(K)$; see \cite[Theorem 5.3]{neutral-representation-dim-leq3}.

\printbibliography
\end{document}